\documentclass[12pt, reqno]{amsart}
\usepackage{amssymb,amsthm,amsfonts,amsmath}

\usepackage{hyperref}
\usepackage{mathrsfs}

\usepackage[dvips]{graphicx}

\evensidemargin1cm \oddsidemargin1cm \textwidth14cm

\newcommand\A{\mathbb A}
\newcommand\Y{\mathbb Y}
\newcommand\Z{\mathbb Z}
\newcommand\C{\mathbb C}
\newcommand\R{\mathbb R}

\newcommand\h{\bold h}
\newcommand\e{\bold e}
\newcommand\he{\bold{\hat e}}
\newcommand\p{\bold p}
\newcommand\HH{\bold H}

\newcommand\HE{\bold{\widehat E}}

\newcommand\al{\alpha}
\newcommand\be{\beta}
\newcommand\ga{\gamma}
\newcommand\Ga{\Gamma}
\newcommand\de{\delta}

\newcommand\ka{\kappa}
\newcommand\La{\Lambda}
\newcommand\la{\lambda}
\newcommand\si{\sigma}
\newcommand\tth{\theta}
\newcommand\epsi{\varepsilon}
\newcommand\om{\omega}
\newcommand\Om{\Omega}

\newcommand\wt{\widetilde}

\newcommand\Prob{\operatorname{Prob}}

\newcommand\const{\operatorname{const}}

\newcommand\Fun{{\operatorname{Fun}}}
\newcommand\Area{\operatorname{Area}}
\newcommand\kernel{\mathcal K}
\newcommand\shape{\operatorname{shape}}
\newcommand\gr{\operatorname{gr}}

\newcommand\up{\uparrow}
\newcommand\down{\downarrow}

\newcommand\ome{\boldsymbol\omega}
\newcommand\pd{\partial}
\newcommand\F{\mathcal{F}}

\newtheorem{theorem}{Theorem}[section]
\newtheorem{proposition}[theorem] {Proposition}
\newtheorem{corollary}[theorem]{Corollary}
\newtheorem{lemma}[theorem]{Lemma}

\theoremstyle{definition}
\newtheorem{definition}[theorem]{Definition}
\newtheorem{remark}[theorem]{Remark}

\numberwithin{equation}{section}

\begin{document}

\title[Infinite-dimensional diffusion processes]
{Anisotropic Young diagrams and \\infinite-dimensional diffusion processes\\
with the Jack parameter}

\author{Grigori Olshanski}
\address{Institute for Information Transmission
Problems, Bolshoy Karetny 19, 127994 Moscow GSP-4, Russia; \newline \indent
Independent University of Moscow}

\email{olsh2007@gmail.com}

\begin{abstract}
We construct a family of Markov processes with continuous sample
trajectories on an infinite-dimensional space, the Thoma simplex. The family
depends on three continuous parameters, one of which, the Jack parameter, is
similar to the beta parameter in random matrix theory. The processes arise in a
scaling limit transition from certain finite Markov chains, the so called
up-down chains on the Young graph with the Jack edge multiplicities. Each of
the limit Markov processes is ergodic and its stationary distribution is a
symmetrizing measure. The infinitesimal generators of the processes are
explicitly computed; viewed as selfadjoint operators in the $L^2$ spaces over
the symmetrizing measures, the generators have purely discrete spectrum which
is explicitly described.

For the special value $1$ of the Jack parameter, the limit Markov processes
coincide with those of the recent work by Borodin and the author (Prob. Theory
Rel. Fields 144 (2009), 281--318). In the limit as the Jack parameter goes to
$0$, our family of processes degenerates to the one-parameter family of
diffusions on the Kingman simplex studied  long ago by Ethier and Kurtz in
connection with some models of population genetics.

The techniques of the paper are essentially algebraic. The main computations
are performed in the algebra of shifted symmetric functions with the Jack
parameter and rely on the concept of anisotropic Young diagrams due to Kerov.

\medskip
\noindent {\it Keywords\/}: Diffusion processes; up-down Markov chains; Thoma's
simplex; Jack symmetric functions; Young diagrams; z-measures; Kerov
interlacing coordinates; shifted symmetric functions; Poisson--Dirichlet
distribution; Selberg integral

\medskip
\noindent {\it Mathematics Subject Classification\/} (2000): 60J60; 60C05;
60J10; 05E05
\end{abstract}

\thanks{At various stages of work, the present research was supported by the RFBR
grants 07-01-91209 and 08-01-00110, by the project SFB 701 (Bielefeld
University), and by the grant ``Combinatorial Stochastic Processes" (Utrecht
University).}

\maketitle

\newpage

\tableofcontents

\newpage

\section{Introduction}\label{1}

\subsection{Motivation and general description of the work}\label{1-1}

This work can be viewed as a continuation of the project started by Borodin and
myself, see \cite{BO6}, \cite{BO7}, \cite{BO8}, but it can be also read
independently. The goal of the project is to study Markovian stochastic
dynamics in certain infinite-dimensional models that originate from random
partitions. The main difference of this paper from the previous ones is in
introducing a new parameter $\tth$, the so-called Jack parameter, which is
analogous to the $\be$-parameter (inverse temperature) in the log-gas systems
($\tth=\be/2$). Exact statements of our results can be found in the next
subsection. Meanwhile, we would like to explain some ideas behind this work and
to describe our motivation.

Several classes of infinite-dimensional Markov processes are known. A large
part of the literature on those deals with interacting particle systems on a
lattice, and also with similar systems in $\R^d$ that are related to Gibbs
measures. Statistical mechanics serves as the main motivational source for such
models. Another source of infinite-dimensional Markov processes is population
genetics. A very interesting but not very well studied problem is construction
of dynamics for particle systems with nonlocal interaction of log-gas type;
such systems naturally arise as large $N$ limits of $N$-particle random matrix
type ensembles.

The model that we study in this paper is of a different origin --- it came up
in the asymptotic representation theory of symmetric groups. Nevertheless, it
turns out to be somewhat similar to log-gas systems on one hand, and on the
other hand it is closely connected to one of the well-known models from
population genetics \cite{EK1}.

Constructing Markov dynamics on an infinite-dimensional state space often
constitutes a nontrivial problem. For example, it may be difficult to assign
rigorous meaning to an intuitive definition of the infinitesimal generator of
the Markov process. See, e.~g., the paper by Spohn \cite{Sp}, where the problem
of the justification of the large $N$ limit transition for Dyson's log-gas
systems is discussed. In order to construct a Markov generator, one often uses
Dirichlet forms. This is a very effective yet technically demanding analytic
method. In this paper we follow a more direct approach in which analysis is
largely replaced by algebra and combinatorics. I hope that some ideas below may
be useful for studying the dynamics in log-gas systems as well.

Let us now describe (in very general terms) the model considered below. Denote
by $\Y_n$ the set of partitions of the natural number $n$. We identify
partitions $\la\in\Y_n$ and Young diagrams with $n$ boxes. For any
$n=1,2,\dots$ we introduce a probability distribution $M^{(n)}_{\tth,z,z'}$ on
$\Y_n$ that depends on three continuous parameters $\tth,z,z'$. The resulting
ensemble of random partitions can be compared to $N$-particle random matrix
ensembles; the role of the parameter $N$ is played by $n$. Let me not give an
exact expression for the weights $M^{(n)}_{\tth,z,z'}(\la)$ here, as it
requires a fairly long discussion (cf. \cite{BO5}). Instead, let me note that
this expression can be represented in the form very much reminiscent of the
joint probability density for random matrix $\be$-ensembles:
$$
M^{(n)}_{\tth,z,z'}(\la)=\const \exp(-2\tth\, W(\la_1,\dots,\la_\ell)),
$$
where $\ell$ is the length of the partition $\la$, and $W$ is a function on
partitions that can be split into the sum of one-particle and two-particle
``interaction potentials'',
$$
W(\la_1,\dots,\la_\ell)=\sum_{i=1}^\ell W_1(\la_i)+\sum_{1\le
i<j\le\ell}W_2(\la_i,\la_j).
$$
The two-particle potential on large distances is asymptotically equivalent to
the logarithmic one:
$$
W_2(\la_i,\la_j)\sim \log\frac1{\la_i-\la_j}\,, \quad \la_i-\la_j\gg0.
$$
The analogy between random partitions and random matrices that looked startling
10 years ago (see, e.g., \cite{BO1}, \cite{BO4}), is nowadays viewed as
commonplace \cite{Ok}.

The key property of the distributions $M^{(n)}_{\tth,z,z'}$ is the fact that
they are related to each other via a certain canonical chain of Markovian
transition functions $\Y_n\to\Y_{n-1}$ ($n=1,2,\dots$), that depend only on
$\tth$, and that are defined in terms of the Jack symmetric functions
corresponding to the parameter $\tth$. According to a general theorem proved in
\cite{KOO}, this coherency property implies the existence of the limiting
probability distribution
$$
M_{\tth,z,z'}=\lim_{n\to\infty} M^{(n)}_{\tth,z,z'}\,,
$$
which lives on the infinite-dimensional compact space \footnote{The appearance
of the {\it double\/} set of coordinates in $\Om$ is related to the fact that
Young diagrams are {\it two-dimensional\/} objects --- they have rows and
columns that play equal roles in our model.}
\begin{multline*}
 \Om=\{(\al;\be): \al=(\al_1,\al_2,\dots), \quad \be=(\be_1,\be_2,\dots),\\
\al_1\ge\al_2\ge\dots\ge0,\quad
\be_1\ge\be_2\ge\dots\ge0,\quad\sum_i\al_i+\sum_j\be_j\le1\}.
\end{multline*}
The space $\Om$ is called the {\it Thoma simplex\/}, and the limit measures
$M_{\tth,z,z'}$ are called the (boundary) {\it z-measures\/}.

The above-mentioned theorem from \cite{KOO} claims that there exists a
one-to-one correspondence $M\leftrightarrow\{M^{(n)}\}$ between the probability
distributions $M$ on $\Om$ and the {\it coherent families\/} $\{M^{(n)}\}$,
that is, sequences of probability distributions related by the Markovian
($\tth$-dependent) transition functions $\Y_n\to\Y_{n-1}$ mentioned above. In
fact, this theorem contains more: it provides a possibility of constructing a
{\it canonical\/} Markov dynamics on $\Om$ that preserves the given
distribution $M$.

The idea is the following. There exists a simple and natural way of
constructing, for any coherent family $\{M^{(n)}\}$, a sequence of reversible
Markov chains $\Y_n\to\Y_n$ with stationary distributions $M^{(n)}$. We call
those the {\it up-down chains\/}. Now it is natural to raise a question whether
the up-down chains converge, as $n\to\infty$, to a Markov process on $\Om$ that
has $M$ as its stationary distribution. \footnote{I got this idea from a
conversation with my friend Sergei Kerov that took place in the nineties. Kerov
had never written about up-down chains, and for the first time they seem to
have appeared in the literature in the paper by Fulman \cite{Fu1}, who used
them for different purposes (cf. also his subsequent publications \cite{Fu2},
\cite{Fu3}). Borodin and I implicitly applied the up-down chains to
constructing Markov dynamics in \cite{BO6} and \cite{BO7}, and then explicitly
in \cite{BO8}.} In the concrete case of z-measures $M$ we can answer this
question in the affirmative. \footnote{I do not know what can be done for
arbitrary measures on $\Om$.}

Although the results of this paper are stated in probabilistic terms, the main
content of the paper is algebraic, and it can be phrased as follows. Consider
the algebra $\Lambda$ of symmetric functions, and identify it with the algebra
of polynomials in countably many generators $p_1,p_2, \dots$ (the power sums).
Further, denote by $T_n$ the Markov operator for the $n$th up-down chain.
Originally $T_n$ is defined as an operator in the space of functions on the
finite set $\Y_n$. However, we show that there exists a uniformly defined (for
all $n=1,2,\dots$) representation of the operators $T_n$ in terms of certain
operators in the algebra $\Lambda$ (essentially we carry the operators $T_n$
over to a common space). The most difficult part of the work is the computation
of this representation in the form of a differential operator with respect to
formal variables $p_1,p_2, \dots$. The techniques that we apply here are
discussed in Subsection \ref{1-5} below.

After bringing $T_n$'s to a suitable form, we find the pre-generator of the
Markov process on $\Om$ as the limit (in a certain rigorous sense)
$$
A=\lim_{n\to\infty}n^2(T_n-1).
$$
The factor $n^2$ corresponds to scaling time --- one step of the Markov chain
with large number $n$ is equated to a small time interval of size $\Delta
t=n^{-2}$. The justification of the limit transition is performed using
standard techniques (Trotter-type theorems, see the book \cite{EK2}), as well
as some ideas from the paper \cite{EK1}. This paper and its relation to the
present work is further discussed in Subsection \ref{1-2} below.

\subsection{Ethier--Kurtz's diffusions and statement of the main
results}\label{1-2} In the remarkable paper \cite{EK1} published in 1981,
Ethier and Kurtz studied a one-parameter family of diffusions \footnote{By a
diffusion we mean a strong Markov process with continuous sample trajectories.}
on the space
\begin{equation}\label{tag1.1}
\overline\nabla_\infty=\{\al=(\al_1,\al_2,\dots): \al_1\ge \al_2\ge\dots\ge0,
\quad \sum_{i=1}^\infty \al_i\le1\}.
\end{equation}
The space $\overline\nabla_\infty$ is compact in the topology of coordinatewise
convergence and can be regarded as an infinite-dimensional simplex. We call it
the {\it Kingman simplex\/}. The diffusions are determined by infinitesimal
generators acting on an appropriate space of functions on
$\overline\nabla_\infty$ and can be written as second order differential
operators
\begin{equation}\label{tag1.2}
\sum_{i,j=1}^\infty \al_i(\de_{ij}-\al_j)\frac{\pd^2}{\pd \al_i\pd
\al_j}-\tau\sum_{i=1}^\infty \al_i\frac{\pd }{\pd \al_i}\,,
\end{equation}
where $\tau>0$ is the parameter. \footnote{In \cite{EK1}, the parameter is
denoted as $\tth$. We use another symbol because of a conflict of notations. We
have also omitted the factor of $\frac12$ in the formula of \cite{EK1}.} Each
of the diffusions has a unique stationary distribution and is reversible and
ergodic. This is a nice example of infinite-dimensional Markov processes,
especially interesting because the stationary distributions are the famous
Poisson-Dirichlet distributions (about them, see, e.g. \cite{Ki2}).

The 3-parameter family of Markov processes on the Thoma simplex $\Om$,
constructed in the present paper, is a wider model of infinite-dimensional
diffusions, containing Ethier--Kurtz's diffusions as a limit case. In our
model, in contrast to that of Ethier--Kurtz, the infinitesimal generator cannot
be written as a differential operator in natural coordinates $(\al;\be)$ on the
state space $\Om$. Nevertheless, the restriction of the generator on an
appropriate invariant core $\F$ admits an explicit expression.

Specifically, the core $\F$ is the algebra of polynomials $\R[q_1,q_2,\dots]$,
where $q_1,q_2,\dots$ are the following functions on $\Om$
\begin{equation}\label{tag1.3}
q_k(\al;\be)=\sum_{i=1}^\infty \al_i^{k+1}+(-\tth)^k \sum_{i=1}^\infty
\be_i^{k+1}, \qquad k=1,2,\dots,
\end{equation}
and $\tth>0$ is the Jack parameter mentioned above. \footnote{The functions
\eqref{tag1.3} are the Jack deformation of the {\it supersymmetric\/} power
sums in coordinates $\al_i$ and $-\be_j$, cf. \cite{Ma}. About the link between
supersymmetry and Young diagrams, see \cite{VK}, \cite{KeO}, \cite{KOO}.} These
functions are continuous and algebraically independent. We call them the {\it
moment coordinates\/} for the following reason: Let us embed $\Om$ into the
space of probability measures on the closed interval $[-\tth,1]\subset\R$ by
assigning to an arbitrary point $(\al;\be)\in\Om$ the atomic measure
\begin{equation}\label{tag1.4}
\nu_{\al;\be}=\sum_{i=1}^\infty\al_i\de_{\al_i}+
\sum_{i=1}^\infty\be_i\de_{-\tth\be_i} + \ga\de_0, \qquad
\ga:=1-\sum\al_i-\sum\be_i\,,
\end{equation}
where $\de_x$ stands for the Dirac measure at $x\in\R$. Then $q_k(\al;\be)$ is
equal to the $k$th moment of the measure $\nu_{\al;\be}$.

Even though the moment coordinates are not true coordinates on $\Om$ in the
conventional differential-geometric sense, they allow us to define the Markov
pre-generator as a second order differential operator acting in the polynomial
algebra $\F$ and depending on the Jack parameter $\tth$ and two additional
continuous parameters $z$ and $z'$:
\begin{multline}\label{tag1.5}
\sum_{i,j\ge1}(i+1)(j+1)
(q_{i+j}-q_i q_j)\frac{\pd^2}{\pd q_i \pd q_j}\\
+\sum_{i\ge1}(i+1)\big[((1-\tth)i+(z+z')) q_{i-1}
-(i+\tth^{-1}zz')q_i\big]\frac{\pd}{\pd q_i}\\
+\tth\sum_{i,j\ge0}(i+j+3)q_i q_j\frac{\pd}{\pd q_{i+j+2}}\,,
\end{multline}
where $q_0\equiv1$. Now are in a position to state the main results of the
paper.

\begin{theorem}\label{1.1}
Let $C(\Om)$ be the Banach space of continuous real-valued
functions on the Thoma simplex\/ $\Om$. Regard the differential operator
\eqref{tag1.5} as an operator in $C(\Om)$ with dense invariant domain
$\F\subset C(\Om)$.

{\rm(i)} The operator \eqref{tag1.5} is closable in  $C(\Om)$ and its closure
serves as the infinitesimal generator of a diffusion process on $\Om$.

{\rm(ii)} The process has a unique stationary distribution and is reversible
and ergodic.

{\rm(iii)} The closure of the pre-generator \eqref{tag1.5} in the $L^2$ space
with respect to the stationary distribution is a self-adjoint operator with
purely discrete spectrum
$$
\{0\}\cup\{-m(m-1+\tth^{-1}zz'): m=2,3,\dots\},
$$
where the multiplicity of the eigenvalue $-m(m-1+\tth^{-1}zz')$ equals the
number of partitions of $m$ without parts equal to $1$.

{\rm(iv)} The stationary distribution is the z-measure $M_{\tth,z,z'}$.
\end{theorem}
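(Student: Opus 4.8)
The plan is to obtain Theorem~\ref{1.1} as the endpoint of the scaling-limit program described in Subsection~\ref{1-1}. First I would realize the operator \eqref{tag1.5} as a limit of the up-down Markov operators: build the reversible up-down chain on $\Y_n$ with stationary distribution $M^{(n)}_{\tth,z,z'}$, let $T_n$ be its Markov operator, and transport $T_n$ to the common algebraic model --- the algebra $\La$ of symmetric functions --- via the identification of functions on $\Y_n$ with a finite-dimensional subspace of $\La$. The central and hardest step is to compute $T_n$ explicitly as a differential operator in the power sums $p_1,p_2,\dots$ and then to establish, on the core $\F$, the convergence $n^2(T_n-1)f\to Af$ with $A$ equal to \eqref{tag1.5}. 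This is where the shifted symmetric functions with the Jack parameter and Kerov's anisotropic Young diagrams enter, and obtaining the $n\to\infty$ asymptotics of the relevant coefficients with enough uniformity is the main obstacle of the whole argument.

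Granting this convergence, part (i) follows from the standard approximation theory for Markov semigroups (the Trotter--Kurtz type theorems in \cite{EK2}) combined with the device of \cite{EK1}. The operators $T_n$ are genuine transition operators --- positive, conservative contractions --- and $A$ is defined on the dense invariant core $\F$, so closability and generation of a Feller semigroup reduce to verifying a Hille--Yosida range condition on $\F$, which I would check directly. Continuity of sample paths (so that the limit is a diffusion, not merely Feller) comes from the observation that a single step of the $n$th chain moves the moment coordinates by $O(1/n)=O(\sqrt{\Delta t})$, the diffusive order; under the time scaling $\Delta t=n^{-2}$ the one-step increments vanish in the limit, so there are no macroscopic jumps and the limiting trajectories are continuous.

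For the spectral statement (iii) I would argue purely algebraically on $\F=\R[q_1,q_2,\dots]$. Grade $\F$ by setting $\deg q_k=k+1$; then a monomial $q_{k_1}\cdots q_{k_r}$ of degree $m$ corresponds bijectively to the partition of $m$ with parts $k_1+1,\dots,k_r+1$, all $\ge2$, so the number of degree-$m$ monomials equals the number of partitions of $m$ with no part equal to $1$. Inspecting \eqref{tag1.5} term by term shows that each of its three groups of summands changes the degree by $-1$ or $0$; hence $A$ is triangular, preserving the filtration by $\F_{\le m}$. Writing $\pd_i=\pd/\pd q_i$ and $E=\sum_{i\ge1}(i+1)q_i\pd_i$ for the Euler operator of this grading, the degree-preserving part of \eqref{tag1.5} is
\[
D=-\sum_{i,j\ge1}(i+1)(j+1)q_iq_j\,\pd_i\pd_j-\sum_{i\ge1}(i+1)(i+\tth^{-1}zz')\,q_i\pd_i,
\]
and the identity $\sum_{i,j}(i+1)(j+1)q_iq_j\pd_i\pd_j=E^2-\sum_i(i+1)^2q_i\pd_i$ gives $D=-E^2+(1-\tth^{-1}zz')E$. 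On the degree-$m$ component $D$ thus acts as the scalar $-m(m-1+\tth^{-1}zz')$. For the admissible parameters these scalars are pairwise distinct over the occurring degrees $m\in\{0,2,3,\dots\}$, so the triangular operator $A$ is diagonalizable on each finite-dimensional $\F_{\le m}$; solving for the lower-degree corrections yields an eigenbasis, and the eigenvalues and their multiplicities come out exactly as asserted.

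Finally, parts (ii) and (iv) I would deduce from reversibility of the approximating chains. Each $T_n$ is reversible with respect to $M^{(n)}_{\tth,z,z'}$, and by the theorem of \cite{KOO} the family $\{M^{(n)}_{\tth,z,z'}\}$ is coherent and converges weakly to the z-measure $M_{\tth,z,z'}$; reversibility and stationarity pass to the limit, so $M_{\tth,z,z'}$ is a symmetrizing, hence stationary, measure for the limit process --- this is (iv). Consequently the pre-generator is symmetric in $L^2(\Om,M_{\tth,z,z'})$, and since the eigenbasis produced in (iii) spans $\F$, which is dense in $L^2$ (the $q_k$ separate points of $\Om$ through the embedding \eqref{tag1.4}), the closure is self-adjoint with the stated purely discrete spectrum, completing (iii). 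Moreover $0$ is a simple eigenvalue (the constants), separated from the rest of the spectrum by the gap $2(1+\tth^{-1}zz')$, so $e^{tA}$ converges to the projection onto constants as $t\to\infty$; this yields ergodicity and the uniqueness of the stationary distribution in (ii).
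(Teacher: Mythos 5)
Your proposal is correct and, in its overall architecture, coincides with the paper's own proof: the up-down chains with stationary laws $M^{(n)}_{\tth,z,z'}$; the transport of $T_n$ into the algebra of $\tth$-shifted symmetric functions and the hard computation $n^2(T_n-1)\to A$ on the core (Sections \ref{7}--\ref{8}, Theorems \ref{7.1}, \ref{8.2}, Corollary \ref{8.7}); part (i) via the Trotter-type Theorem \ref{9.3}, with dissipativity of $A$ inherited from the chains and the Hille--Yosida range condition obtained exactly by the mechanism you have in mind (injectivity, hence surjectivity, of $s-A$ on each finite-dimensional invariant subspace $\La^{\circ(m)}$, Theorem \ref{9.6}); parts (ii) and (iv) by passing the discrete symmetry to the limit against $M^{(n)}_{\tth,z,z'}\to M_{\tth,z,z'}$ (Lemma \ref{9.8}, Theorems \ref{9.9} and \ref{9.10}); and part (iii) by precisely your computation --- the paper also isolates the degree-preserving part $-E(E-1+\tth^{-1}zz')$, see \eqref{tag9.9}, and discards the degree-lowering remainder. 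Your triangularity argument (using that the scalars $m(m-1+\tth^{-1}zz')$ are pairwise distinct, which holds since $zz'>0$ for admissible parameters) is a correct expansion of the paper's terse remark that the rest term does not affect the spectrum; the paper additionally derives orthogonality of the eigenbasis from the $L^2$-symmetry. The one genuinely different step is the continuity of sample paths: you argue that a single step of the $n$th chain displaces the moment coordinates by $O(1/n)=O(\sqrt{\Delta t})$, so the limit has no jumps, whereas the paper (Theorem \ref{9.7}, following \cite{BO8}) shows that smooth cylinder functions in $q_1,q_2,\dots$ lie in the domain of $\bar A$ and verifies the Dynkin--Kinney condition. Your route is viable but, as stated, heuristic: smallness of the prelimit jumps says nothing about the limit process until you upgrade the semigroup convergence of Theorem \ref{9.6}(ii) to weak convergence of the time-scaled chains in the Skorokhod space of paths in $\Om$ (\cite[Chapter 4]{EK2}) and then invoke the standard fact that a Skorokhod limit of paths whose maximal jump tends to zero (here, deterministically bounded by $O(1/n)$) is almost surely continuous. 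With that supplement it is a legitimate alternative whose advantage is avoiding any analysis of the domain of $\bar A$; the paper's argument instead stays at the level of the generator, whose explicit form it already possesses and needs for the remaining claims.
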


About the z-measures see the next subsection. The restrictions on parameters
$(z,z')$ are indicated below in Proposition \ref{5.3}.

In the limit regime as
\begin{equation}\label{tag1.6}
z\to0, \quad z'\to0, \quad \tth\to0,\quad \tth^{-1}zz'\to\tau>0,
\end{equation}
our model degenerates to the Ethier--Kurtz model with parameter $\tau$. Let me
explain this informally:

Observe that as $\tth\to0$, the interval $[-\tth,1]$ shrinks to $[0,1]$ so that
the $\be$-coordinates disappear from \eqref{tag1.4}. This explains why the
Thoma simplex $\Om$ degenerates to the Kingman simplex \eqref{tag1.1}. Next, in
the regime \eqref{tag1.6}, the expression \eqref{tag1.5} degenerates to
\begin{equation}\label{tag1.7}
\sum_{i,j\ge1}(i+1)(j+1) (q_{i+j}-q_i q_j)\frac{\pd^2}{\pd q_i \pd q_j}\;+\;
\sum_{i\ge1}(i+1)\big[i q_{i-1}-(i+\tau) q_i\big]\frac{\pd}{\pd q_i}\,.
\end{equation}
Finally, it is not difficult to show that \eqref{tag1.7} is merely another form
of \eqref{tag1.2}, provided that the moment coordinates in \eqref{tag1.7} are
viewed as functions on the subspace
$\{(\al,0)\subset\Om\}=\overline\nabla_\infty$. \footnote{The fact was
established in \cite{Sch}, see also \cite{Pe1}.} Note also that in the regime
\eqref{tag1.6}, the z-measures weakly converge to the Poisson-Dirichlet
distribution with parameter $\tau$.

\subsection{Z-measures and Selberg integrals}\label{1-3}
The z-measures form a distinguished family of probability measures on the Thoma
simplex $\Om$. Theses measures first emerged in the note \cite{KOV1} in
connection with the problem of harmonic analysis on the infinite symmetric
group, see also \cite{KOV2} for a detailed exposition and \cite{B2},
\cite{BO1}, \cite{BO2}. All these papers concerned the special case $\tth=1$.
Then the z-measures are the spectral measures governing the decomposition of
some analogs of the regular representation; here it is worth noting that $\Om$
is a kind of dual space to the infinite symmetric group. The case
$\tth=\frac12$ is also related to a problem of harmonic analysis (see
\cite{Str}), while (as was already mentioned above) the limit case $\tth=0$
corresponds to the Poisson-Dirichlet distributions. For general $\tth>0$, the
z-measures were defined in \cite{Ke4} (see also \cite{BO3} for a different
approach). The idea of building a theory valid for all $\tth>0$ is similar to
Dyson's idea of introducing the beta parameter into random matrix theory (see
\cite{Dy}) \footnote{In our notation, Dyson's $\be$ corresponds to $2\tth$. }
or to Heckman--Opdam's idea of generalizing harmonic analysis on symmetric
spaces to root systems with formal root multiplicities (see Heckman's lectures
in \cite{HS}).

A formal definition of the z-measures can be given in the following way.
Consider the algebra $\La$ of symmetric functions and its basis $\{\mathcal
P_{\la;\tth}\}$ of Jack symmetric functions with parameter $\tth$; here the
index $\la$ ranges over the set $\Y$ of Young diagrams. \footnote{See, e.g.,
\cite{Ma}. Our parameter $\tth$ is inverse to Macdonald's parameter $\al$ and
coincides with Kadell's \cite{Ka} parameter $k$.} Identify $\La$ with
$\R[p_1,p_2,\dots]$ where $p_k$'s are the Newton power sums, and consider the
algebra morphism $\La\to C(\Om)$ defined by
$$
p_1\to1, \quad p_2\to q_1\,, \quad p_3\to q_2\,, \dots,
$$
where $q_1=q_1(\al;\be)$, $q_2=q_2(\al;\be)$,\dots are the moment coordinates
\eqref{tag1.3}. Then each Jack function $\mathcal P_{\la;\tth}$ turns into a
continuous function $\mathcal P^\circ_{\la;\tth}(\al;\be)$ on $\Om$, which may
be viewed as a version of {\it supersymmetric\/} Jack function in $\al$ and
$\be$. \footnote{About applications of such functions to integrable systems,
see \cite{SV}.} The z-measure with parameters $(\tth,z,z')$, denoted as
$M_{\tth,z,z'}$, can be characterized by the integrals
\begin{equation}\label{tag1.8}
M^{(n)}_{\tth,z,z'}(\la):=[p_1^n:\mathcal P_{\la;\tth}]\,\int_\Om \mathcal
P^\circ_{\la;\tth}(\al;\be)M_{\tth,z,z'}(d\al\,d\be)
\end{equation}
for which there is a nice multiplicative formula (here $n$ equals $|\la|$, the
number of boxes in the diagram $\la$, and $[p_1^n:\mathcal P_{\la;\tth}]$
stands for the coefficient of $\mathcal P_{\la;\tth}$ in the expansion of
$p_1^n$ in the basis of Jack functions). For any fixed $\la$,
$M^{(n)}_{\tth,z,z'}(\la)$ is a rational function in $z$, $z'$, and $\tth$,
whose explicit expression can be found in \cite{BO5}.

For any fixed $\tth>0$, the set of admissible values of $(z,z')$ (for which the
z-measure is well defined as a probability measure on $\Om$) is precisely the
set of those $(z,z')$ for which the quantities $M^{(n)}_{\tth,z,z'}(\la)$ are
nonnegative for all diagrams $\la$ (about the meaning of this condition, see
the next subsection). Since $M^{(n)}_{\tth,z,z'}(\la)$ does not change under
transposition $z\leftrightarrow z'$, one can equally well take as parameters
$z+z'$ and $zz'$; in these new coordinates, the set of admissible values
becomes a closed subset of $\R^2$ with a nonempty interior. This set can be
divided into two parts depending on whether the integrals \eqref{tag1.8} are
strictly positive for all $\la$ (the nondegenerate series) or vanish for some
$\la$ (the degenerate series). For more detail, see \cite{BO5}.

The present paper focuses on the nondegenerate series but I would like to give
some comments on the degenerate series, because the degenerate z-measures
demonstrate in miniature some features of the general z-measures.

The characteristic property of the degenerate series is that the corresponding
z-measure $M_{\tth,z,z'}$ is supported by a finite-dimensional subset in $\Om$.
The simplest example is
$$
z=N\tth, \quad z'=(N-1)\tth+\si, \qquad N=1,2,\dots, \quad \si>0.
$$
Then the support of the z-measure is the subset
\begin{multline*}
\{(\al;\be): \al_{N+1}=\al_{N+2}=\dots=0, \quad\be_1=\be_2=\dots=0, \\
\al_1\ge\dots\ge\al_N\ge0, \quad \al_1+\dots+\al_N=1\}\subset\Om,
\end{multline*}
which is a simplex of dimension $N-1$; the measure itself has the form
\begin{equation}\label{tag1.9}
\const\cdot \prod_{i=1}^N \al_i^{\si-1}\cdot\prod_{1\le i<j\le
N}(\al_i-\al_j)^{2\tth}\cdot d^\circ\al,
\end{equation}
where $d^\circ\al$ stands for the Lebesgue measure on the simplex.

It is worth noting that in this special case, the integrals \eqref{tag1.8} turn
into Selberg-type integrals, see \cite{Ke3}, \cite{Ke6}. More refined examples
of degenerate z-measures involve both the $\al$ and $\be$ coordinates and
provide super-analogs of the Selberg integral (see \cite{BO3}). In a certain
sense, the general z-measures on $\Om$ can be viewed as an infinite-dimensional
(super) generalization of the Selberg measures \eqref{tag1.9}. \footnote{About
the history and various aspects and versions of the Selberg integral, see the
recent survey \cite{FW}.}

Closely related to \eqref{tag1.9} is the following probability measure on the
cone $\al_1\ge\dots\ge\al_N\ge0$ in $\R^N$
\begin{equation}\label{tag1.10}
\const\cdot \prod_{i=1}^N \al_i^{\si-1}e^{-\al_i}\cdot\prod_{1\le i<j\le
N}(\al_i-\al_j)^{2\tth}\cdot d\al,
\end{equation}
where $d\al$ is the Lebesgue measure on the cone. \footnote{The passage from
\eqref{tag1.9} to \eqref{tag1.10} is similar to that from Euler's Beta integral
to Euler's Gamma integral.} Note that \eqref{tag1.10} determines the
$N$-particle {\it Laguerre ensemble\/} with the beta parameter $2\tth$.

This example builds a bridge between the z-measures and random matrix type
ensembles and  illustrates the thesis that the Jack parameter $\tth$ plays the
role of the beta parameter of random matrix theory.

\subsection{Discrete approximation. Up-down Markov chains}\label{1-4}
As was already mentioned above, the diffusion processes of Theorem \ref{1.1}
arise as limits of some finite Markov chains. Here is an outline of the
construction.

Recall that by $\Y_n$ we denote the set of Young diagrams with $n$ boxes,
$n=0,1,2,\dots$\,. Let us return to integrals \eqref{tag1.8} and write them in
abstract form
\begin{equation}\label{tag1.11}
M^{(n)}(\la):=[p_1^n:\mathcal P_{\la;\tth}]\,\int_\Om \mathcal
P^\circ_{\la;\tth}(\al;\be)M(d\al\,d\be), \quad \la\in\Y_n\,,
\end{equation}
where $M$ is an arbitrary probability measure on $\Om$. One can prove that the
functions $\mathcal P^\circ_{\la;\tth}(\al;\be)$ are nonnegative on $\Om$,
which implies that the numbers $M^{(n)}(\la)$ are nonnegative, too.
Furthermore, for any fixed $n$, one has
$$
\sum_{\la\in\Y_n}M^{(n)}(\la)=1,
$$
so that $M^{(n)}(\,\cdot\,)$ is a probability measure on the finite set $\Y_n$.
Next, one can prove that there exist embeddings $\iota_{\tth,n}:\Y_n\to\Om$
such that the push-forwards $\iota_{\tth,n}(M^{(n)})$ converge to $M$ in the
weak topology. It is worth noting that the embeddings do not depend on the
initial measure $M$.

Thus, for each fixed $\tth>0$, there exists an approximation of the compact
space $\Om$ by finite sets $\Y_n$ providing an approximation of any probability
measure $M$ on $\Om$ by some canonical sequence $\{M^{(n)}\}$  of probability
measures on the sets $\Y_n$. \footnote{In the special case $\tth=1$ this fact
is a refinement of Thoma's theorem \cite{T}, essentially due to Vershik and
Kerov \cite{VK}. For general $\tth>0$ this is the result of \cite{KOO}. A
similar fact holds in the limit case $\tth=0$, with $\Om$ replaced by
$\overline\nabla_\infty$: this is Kingman's theorem, see \cite{Ki1},
\cite{Ke6}.}

The sequences $\{M^{(n)}\}$ coming from probability measures on $\Om$ can be
characterized by a system of relations,
\begin{equation}\label{tag1.12}
M^{(n-1)}(\mu)=\sum_{\la\in\Y_n}M^{(n)}(\la)p^\down_\tth(\la,\mu),
\end{equation}
where $n$ and $\mu\in\Y_{n-1}$ are arbitrary and
\begin{equation}\label{tag1.13}
p^\down_\tth(\la,\mu):=\frac{[p_1^{n-1}:\mathcal P_{\mu;\tth}][p_1 \mathcal
P_{\mu;\tth}:\mathcal P_{\la;\tth}]}{[p_1^n:\mathcal P_{\la;\tth}]}.
\end{equation}

Thus, \eqref{tag1.11} establishes a bijective correspondence between
probability measures $M$ on $\Om $ and sequences $\{M^{(n)}\}$ of probability
measures on the sets $\Y_n$, satisfying the relations \eqref{tag1.12}. Such
sequences are called {\it coherent systems\/}. \footnote{We follow the
terminology adopted in \cite{Ol}. In the limit case $\tth=0$, coherent systems
were earlier introduced by Kingman under the name of {\it partition
structures\/}, see \cite{Ki1}.}

The quantities \eqref{tag1.13} are nonnegative and, for fixed $\la\in\Y_n$,
$$
\sum_{\mu\in\Y_{n-1}}p^\down_\tth(\la,\mu)=1,
$$
so that they determine transition kernels from $\Y_n$ to $\Y_{n-1}$, for each
$n$. We call them the {\it down transition probabilities\/} (these are the
Markovian transition functions mentioned above in Subsection \ref{1-1}). The
relations \eqref{tag1.12} mean that the down transition kernels transform
$M^{(n)}$ to $M^{(n-1)}$, for each $n$.

Now assume that $M$ is nondegenerate in the sense that for the corresponding
coherent system $\{M^{(n)}\}$, all quantities $M^{(n)}(\la)$ are strictly
positive. This condition is fulfilled, for instance, if the topological support
of $M$ is the whole space $\Om$. Then one can define some {\it up transition
probabilities\/} $p^\up_{\tth,M}(\la,\nu)$ which determine transition kernels
in the reverse direction, from $\Y_n$ to $\Y_{n+1}$, and transform $M^{(n)}$ to
$M^{(n+1)}$, for each $n$. Let us emphasize that the up transition
probabilities depend not only on $\tth$ (as the down probabilities) but also of
the coherent system, that is, of the initial measure $M$.

Taking the superposition of these two transition kernels we get, for each $n$,
a transition kernel from $\Y_n$ to itself,
\begin{equation}\label{tag1.14}
\Prob_n\{\la\to\ka\}=\sum_{\nu\in\Y_{n+1}}p^\up_{\tth,M}(\la,\nu)p^\down_\tth(\nu,\ka),
\quad \la,\ka\in\Y_n\,,
\end{equation}
It determines a reversible ergodic Markov chain on $\Y_n$ which has $M^{(n)}$
as the stationary distribution. We call this chain the $n$th {\it up-down
Markov chain\/} associated with $M$.

Thus, given $M$, we dispose not only of a canonical approximation $M^{(n)}\to
M$ but also of a natural reversible Markov chain preserving the $n$th measure,
for each $n$. This fact forms the basis of the work: the idea is to analyze the
asymptotics of the up-down chains associated with a nondegenerate z-measure, as
$n\to\infty$, and show that the chains have a scaling limit leading to a Markov
process on $\Om$.

This idea was first realized for the special case $\tth=1$ in the paper
\cite{BO8} by Borodin and myself. \footnote{Note that Ethier--Kurtz's
diffusions were obtained in two ways, both using an approximation procedure,
but the motivation of \cite{EK1} was quite different. Petrov \cite{Pe1} showed
that application of up-down chains makes it possible to re-derive
Ethier--Kurtz's diffusions and also get their analogs corresponding to Pitman's
two-parameter generalization of the Poisson-Dirichlet distributions. } However,
our computation of the limit pre-generator in the moment coordinates relied on
a combinatorial result of Lascoux and Thibon \cite{LT} for which no Jack analog
is available. In the present paper I apply another method; its basic ideas and
related concepts are described in the next subsection.

\subsection{Shifted symmetric functions, interlacing coordinates, and
anisotropic diagrams}\label{1-5} Let $\Fun(\Y_n)$ be the space of functions on
the finite set $\Y_n$ and $T_n:\Fun(\Y_n)\to\Fun(\Y_n)$ be the one-step
operator of the $n$th up-down Markov chain, induced by the transition kernel
\eqref{tag1.14}:
$$
(T_nF)(\la)=\sum_{\ka\in\Y_n}\Prob_n\{\la\to\ka\}F(\ka), \quad \la\in\Y_n\,.
$$
We show that
\begin{equation}\label{tag1.15}
\lim_{n\to\infty}n^2(T_n-1)=A,
\end{equation}
where $A$ is the differential operator \eqref{tag1.5}. Although the pre-limit
operators live in varying spaces, one can give a sense to the limit transition
by making use of the projections $C(\Om)\to\Fun(\Y_n)$, which are induced by
the embeddings $\iota_{\tth,n}:\Y_n\to\Om$ mentioned above. Here we employ a
well-known formalism, described in \cite{EK2}. Then, using a refined version of
Trotter's theorem (\cite[Theorem 7.5]{EK2}), we show that $A$ is closable and
generates a Markov semigroup in $C(\Om)$. The remaining claims of Theorem
\ref{1.1} are established in the same way as in \cite{BO8}.

The heart of the paper is the proof of \eqref{tag1.15}. To handle the
transition probabilities entering formula \eqref{tag1.14} we use an ingenious
trick invented by Kerov \cite{Ke4}: Kerov's idea was to consider {\it
anisotropic Young diagrams\/} made of rectangular boxes of size $\tth\times1$
and to parametrize such diagrams by pairs of interlacing sequences, which
encode the positions of the outer and inner corners (for more detail, see
Section \ref{4}). This trick allows one to completely avoid the hard machinery
related to Jack symmetric functions and reduce the proof of \eqref{tag1.15} to
a computation in the algebra of {\it $\tth$-regular functions\/} on the set
$\Y$ of Young diagrams.

This algebra, denoted as $\A_\tth$, consists of {\it $\tth$-shifted symmetric
functions\/} in coordinates $\la_1,\la_2,\dots$\,. According to the original
definition of the algebra $\A_\tth$ (see \cite{KOO}), it is generated by the
``$\tth$-shifted'' analogs of power sums
$$
p^*_m(\la)=\sum_{i=1}^\infty [(\la_i-\tth i)^m-(-\tth i)^m], \qquad
m=1,2,\dots, \quad \la\in\Y.
$$
On the other hand, an important fact is that $\A_\tth$ also admits a nice
description in terms of Kerov's interlacing coordinates.

Note also that the computation of the limit operator in \eqref{tag1.15}
substantially employs an asymptotic formula for $\tth$-regular functions,
established in \cite{KOO} (see Theorem \ref{9.5} below).

\subsection{A variation: shifted Young diagrams and Schur's
Q-func\-tions}\label{1-6} Schur's Q-functions span a proper subalgebra in the
algebra of symmetric functions. As well known, these functions play the same
role in the theory of projective characters of symmetric groups as the ordinary
Schur functions do for ordinary characters. An analog of z-measures related to
Schur's Q-functions was found in \cite{B1}, see also \cite{BO3}. Replacing the
ordinary Young diagrams by the so-called shifted Young diagrams (which
correspond to strict partitions), one can define again the up-down Markov
chains. Their scaling limits were studied by Petrov \cite{Pe2}. The results he
obtained are parallel to those of \cite{BO8}, but the computation leading to an
analog of formula \eqref{tag1.5} for the pre-generator is based on the method
of the present paper.

\subsection{Organization of the paper}\label{1-7}
In Section \ref{2} we discuss the general formalism of up-down Markov chains.
In Section \ref{3} we recall the definition of the Young graph with Jack edge
multiplicities \cite{KOO} and introduce the corresponding system of down
probabilities. In Section \ref{4} we explain what are Kerov's anisotropic Young
diagrams and their interlacing coordinates \cite{Ke4}. Using these concepts, we
give an alternative definition of the down probabilities, and then in Section
\ref{5} we describe the up probabilities associated to the z-measures. In
Section \ref{6} we present the necessary material about the algebra $\A_\tth$
of $\tth$-regular functions on $\Y$. Here we also establish a link between
$\A_\tth$ and the up and down transition functions. The long Section \ref{7}
contains the key computation. Its result, which is stated in the beginning of
the section (Theorem \ref{7.1}), describes the top degree terms of the down and
up operators in the algebra $\A_\tth$. Proceeding from this computation, we
find in Section \ref{8} the top degree term of the operator $T_n-1$ (Theorem
\ref{8.2}). Combining this with an asymptotic theorem from \cite{KOO} we
perform in Section \ref{9} the limit transition from the up-down Markov chains
to diffusion processes on $\Om$: the final results are Theorems \ref{9.6},
\ref{9.7}, \ref{9.9}, and \ref{9.10}.

\section{Markov growth of Young diagrams and associated up-down Markov
chains}\label{2}

Let $\Y$ denote the set of all Young diagrams, including the empty diagram
$\varnothing$, and let $\Y_n\subset\Y$ be the subset of diagrams with $n$
boxes, $n=0,1,2,\dots$. Thus, $\Y$ is the disjoint union of the finite sets
$\Y_0$, $\Y_1$, \dots . By $|\la|$ we denote the number of boxes in a diagram
$\la$. As in \cite{Ma}, we identify Young diagrams and the corresponding
partitions of natural numbers, so that $\Y_n$ is identified with the set of
partitions of $n$. Using this identification we write Young diagrams in the
partition notation: $\la=(\la_1,\la_2,\dots)$.

If $\la$ and $\mu$ are two Young diagrams then we write $\mu\nearrow\la$ or,
equivalently, $\la\searrow\mu$ if $\mu\subset\la$ and $|\la|=|\mu|+1$ (that is,
$\mu$ is obtained from $\la$ by removing a box).

The {\it Young graph\/} is the graph with the vertex set $\Y$ and the edges
formed by arbitrary couples of diagrams, $\mu$ and $\la$, such that
$\mu\nearrow\la$. This is a {\it graded graph\/}, in the sense that the vertex
set $\Y$ is partitioned into {\it levels\/} (the finite sets $\Y_n$) and only
vertices of adjacent levels can be joined by an edge.

By an {\it infinite standard Young tableau\/} we mean an infinite sequence of
Young diagrams, $\{\la(n)\}_{n=0,1,2,\dots}$, subject to the following
condition: for any $n$, one has $\la(n)\in\Y_n$ and $\la(n)\nearrow\la(n+1)$.
In other words, this is an infinite monotone path in the Young graph started at
$\varnothing\in\Y_0$. Let $\mathcal T$ denote the space of all infinite
standard Young tableaux; it is a closed subset in the infinite product space
$\prod\Y_n$ equipped with the product topology. Thus, $\mathcal T$ is a compact
topological space and we can define the sigma-algebra of Borel subsets in
$\mathcal T$.

Assume we are given a probability Borel measure $\mathcal M$ on the space
$\mathcal T$. Then $\mathcal M$ can be viewed as the law of a random sequence
$\{\la(n)\}$ of Young diagrams. Let us say that $\mathcal M$ is a {\it Markov
measure\/} if $\{\la(n)\}$ possesses the Markov property. That is, conditioned
on $\la(n)=\la$, the subsequences $\{\la(0),\dots,\la(n-1)\}$ and
$\{\la(n+1),\la(n+2),\dots\}$ are independent from each other.

\begin{definition}[\rm Up and down transition probabilities]\label{2.1}
With any
Markov measure $\mathcal M$ on $\mathcal T$ we associate the following objects:
the {\it one-dimensional distributions\/} $M^{(n)}$, the {\it up transition
probabilities\/} $p^\up(\la,\nu)$, and the {\it down transition
probabilities\/} $p^\down(\la,\mu)$.  Here $M^{(n)}$ is the probability measure
on $\Y_n$ defined by
$$
M^{(n)}(\la)=\Prob\{\la(n)=\la\}, \qquad \la\in\Y_n\,.
$$
Further, for $\la\in\Y_n$, $\nu\in\Y_{n+1}$, and $\mu\in\Y_{n-1}$, we define
$p^\up(\la,\nu)$ and $p^\down(\la,\mu)$ as the conditional probabilities
\begin{gather*}
p^\up(\la,\nu)=\Prob\{\la(n+1)=\nu\mid \la(n)=\la\}, \\
p^\down(\la,\mu)=\Prob\{\la(n-1)=\mu\mid \la(n)=\la\}.
\end{gather*}
We view these probabilities as certain quantities associated to the oriented
edges of the graph.
\end{definition}

More precisely, the above definition makes sense if $M^{(n)}(\la)>0$ for all
$n$ and all $\la\in\Y_n$. This assumption holds in the concrete situation
studied in the paper. Note, however, that even if $M^{(n)}(\la)$ vanishes for
some diagrams $\la$, one can still define the up and down transition
probabilities on an appropriate subgraph of $\Y$.

Obviously, for any fixed $\la$,
$$
\sum_{\nu:\,\nu\searrow\la}p^\up(\la,\nu)=1, \qquad
\sum_{\mu:\,\mu\nearrow\la}p^\down(\la,\mu)=1,
$$
and the measures $M^{(n)}$ are consistent with both the up and down transition
probabilities in the following sense:
\begin{gather}
M^{(n+1)}(\nu)=\sum_{\la:\,\la\nearrow\nu}M^{(n)}(\la)p^\up(\la,\nu),
\label{tag2.1}\\
M^{(n-1)}(\mu)=\sum_{\la:\,\la\searrow\mu}M^{(n)}(\la)p^\down(\la,\mu).\label{tag2.2}
\end{gather}

Remark that the up transition probabilities $p^\up(\la,\nu)$ determine the
initial Markov measure $\mathcal M$ uniquely. Indeed, this happens because
there exists an initial ``time moment'', $n=0$, and the state space for $n=0$
is a singleton. Once we know the up transition probabilities, we can
reconstruct from the recurrence \eqref{tag2.1} the one-dimensional marginals
$M^{(n)}$ and, more generally, all finite-dimensional distributions. The up
transition probabilities are well suited to represent $\{\la(n)\}$ as a model
of {\it random Markov growth\/} of Young diagrams, where at each consecutive
moment of time a single new box is appended.

The down transition probabilities $p^\down(\la,\mu)$ do not possess the above
property for the obvious reason that for reversed time $n$, which ranges from
$+\infty$ to $0$, there is no finite initial moment. In such a situation, for a
given system of transition probabilities, a host of Markov measures satisfying
the corresponding recurrence relations may exist. Specifically, the following
abstract theorem holds:

\begin{theorem}\label{2.2} Fix an arbitrary system $p^\down=\{p^\down(\la,\mu)\}$
of down transition probabilities on the edges of the Young graph. That is,
assign to all downward oriented edges $\la\searrow\mu$  nonnegative numbers
$p^\down(\la,\mu)$ in such a way that
$$
\sum_{\mu:\,\mu\nearrow\la}p^\down(\la,\mu)=1 \quad\text{\rm for any fixed
vertex $\la$.}
$$

Then there exists a topological space $\Om(p^\down)$ and a function
$\kernel(\la,\om)$ on $\Y\times\Om(p^\down)$, continuous with respect to $\om$,
taking values in $[0,1]$, and such that the relation
\begin{equation}\label{tag2.3}
M^{(n)}(\la)=\int_{\Om(p^\down)}\kernel(\la,\om)M(d\om), \qquad \la\in\Y_n\,,
\quad n=1,2,\dots,
\end{equation}
establishes a bijective correspondence $\{M^{(n)}\}\leftrightarrow M$ between
sequences of probability measures solving the recurrence relations
\eqref{tag2.2} and probability measures on the space $\Om(p^\down)$.
\end{theorem}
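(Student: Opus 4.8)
The plan is to recognize the system of relations \eqref{tag2.2} as describing the threads of a projective limit of finite-dimensional simplices and then to invoke Choquet theory. First I would introduce, for each $n$, the finite-dimensional simplex $\Delta_n$ of probability measures on the finite set $\Y_n$, together with the affine map $\pi_n\colon\Delta_n\to\Delta_{n-1}$ defined by $(\pi_n M)(\mu)=\sum_{\la:\,\la\searrow\mu}M(\la)\,p^\down(\la,\mu)$. The stochasticity hypothesis $\sum_{\mu:\,\mu\nearrow\la}p^\down(\la,\mu)=1$ guarantees that $\pi_n$ indeed lands in $\Delta_{n-1}$ and is continuous. A sequence $\{M^{(n)}\}$ solves \eqref{tag2.2} precisely when $\pi_n M^{(n)}=M^{(n-1)}$ for all $n$, that is, when it is a thread of the projective limit $\mathcal C:=\varprojlim(\Delta_n,\pi_n)$, realized as a closed convex subset of the product $\prod_n\Delta_n$. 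Since each $\Y_n$ is finite, this product is compact and metrizable, so $\mathcal C$ is a compact convex metrizable subset of a locally convex space; it is nonempty because it is an inverse limit of nonempty compact Hausdorff spaces along continuous bonding maps.

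The crucial structural fact I would then establish is that $\mathcal C$ is a \emph{Choquet simplex}. This is where the special nature of the problem enters: a projective limit of a sequence of finite-dimensional simplices along affine, positive, mass-preserving bonding maps of the sort produced by Markov kernels is always a metrizable Choquet simplex. I would either cite this from the theory of Choquet simplices or verify the lattice (Riesz) property of the cone spanned by $\mathcal C$ directly, using that each finite-dimensional cone over $\Delta_n$ is a lattice and that the $\pi_n$ respect the order structure. Granting the simplex property, Choquet's theorem in the metrizable case applies: the set of extreme points $\operatorname{ex}\mathcal C$ is a $G_\delta$, hence Borel, and every $x\in\mathcal C$ is the barycenter of a \emph{unique} probability measure supported on $\operatorname{ex}\mathcal C$.

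With this in hand I would set $\Om(p^\down):=\operatorname{ex}\mathcal C$, a Polish space in the induced topology. Each $\om\in\Om$ is itself an extreme coherent system $\{M^{(n)}_\om\}$, and I would define $\kernel(\la,\om):=M^{(n)}_\om(\la)$ for $\la\in\Y_n$. This kernel takes values in $[0,1]$ by construction, and it is continuous in $\om$ since $\om\mapsto M^{(n)}_\om(\la)$ is the restriction to $\Om$ of a coordinate projection of the product space. The barycenter representation of a thread then reads exactly as \eqref{tag2.3}, and the barycenter map $M\mapsto\{M^{(n)}\}$ from probability measures on $\Om$ to $\mathcal C$ is affine and continuous; its surjectivity is the existence half of Choquet's theorem, while its injectivity is the uniqueness half guaranteed by the simplex property. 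Hence this map is the asserted bijection. (Alternatively, one could identify $\Om$ with the Martin boundary of the down chain, but the inverse-limit presentation seems the most direct.)

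I expect the uniqueness to be the main obstacle, i.e.\ the proof that $\mathcal C$ is a genuine simplex rather than merely a compact convex set. The existence of an integral representation over extreme points is the soft Krein--Milman/Choquet part, whereas the bijectivity of the correspondence $\{M^{(n)}\}\leftrightarrow M$ rests entirely on the simplex structure inherited from the inverse-limit presentation.
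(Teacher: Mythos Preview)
Your proposal is correct. The paper itself does not give a self-contained proof of this theorem: it simply remarks that the statement is ``no more than an adaptation of well-known results concerning boundaries of Markov chains'' and refers the reader to \cite{KOO}. So there is no detailed argument in the paper to compare against line by line.

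That said, the underlying framework in \cite{KOO} is precisely the projective-limit/Choquet simplex picture you outline (phrased there in the language of the Martin boundary of the graded graph), so your approach is not a genuinely different route but rather an explicit unpacking of what the paper leaves implicit. The one point I would flag is the step you yourself single out: that an inverse limit of finite-dimensional simplices along affine Markov maps is again a Choquet simplex. This is true and standard, but it is the substantive ingredient, and in a fully written proof you should either give the Riesz-lattice verification or cite a precise source (e.g.\ the treatment in Goodearl's or Alfsen's books, or the discussion in \cite{KOO} itself). Everything else---non-emptiness of $\mathcal C$ by compactness, the $G_\delta$ nature of $\operatorname{ex}\mathcal C$ in the metrizable case, continuity of $\kernel(\la,\cdot)$ as a coordinate projection, and the bijection via Choquet uniqueness---is routine once the simplex property is in hand.
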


This theorem is no more than an adaptation of well-known results concerning
boundaries of Markov chains (or more  general Markov processes). For a proof of
the theorem, see \cite{KOO}. For the concrete systems $p^\down$ considered in
the present paper, the precise form of the space $\Om(p^\down)$ and the kernel
$\kernel(\la,\om)$ is indicated in Subsection \ref{9-4} below.

The space $\Om(p^\down)$ is called the (minimal) {\it entrance boundary\/} for
the couple $(\Y,p^\down)$, and for the measure $M$ we will use the term the
{\it boundary measure\/} of $\{M^{(n)}\}$. (Note that the marginals $M^{(n)}$
together with the down transition probabilities already suffice to reconstruct
the initial Markov measure $\mathcal M$.)

We will not use Theorem \ref{2.2} in our arguments but it is useful for better
understanding the constructions of the paper. Heuristically, the result of
Theorem \ref{2.2} can be explained as follows: If we be dealing with {\it
finite\/} Markov sequences $\{\la(0),\dots,\la(n)\}$, then we could reconstruct
the law of such a sequence from its down transition probabilities and the
distribution $M^{(n)}$ on the uppermost level $\Y_n$. For infinite sequences,
the boundary $\Om(p^\down)$ plays the role of the nonexisting uppermost level
$\Y_\infty$, and the boundary measure $M$ is a substitute of the nonexisting
distribution $M^{(\infty)}$. It is not surprising that $\Om(p^\down)$ is
obtained as a kind of limit of the sets $\Y_n$ as $n\to\infty$. As for $M$,
then at least for concrete down transition probabilities that are discussed
below, $M$ can also be obtained as a limit of the distributions $M^{(n)}$.

We will regard down transition probabilities as a tool for specifying a {\it
class\/} of Markov measures on $\mathcal T$.

\begin{definition}[\rm Up-down Markov chains]\label{2.3}
Let $\mathcal M$ be a Markov measure on $\mathcal T$ and $\{M^{(n)}\}$ be the
corresponding family of distributions on the sets $\Y_n$. To simplify the
discussion assume that $M^{(n)}(\la)>0$ for all $n$ and all $\la\in\Y_n$, so
that the transition probabilities $p^\up=\{p^\up(\la,\nu)\}$ and
$p^\down=\{p^\down(\la,\mu\}$ are well defined for all edges of the Young
graph.

For each $n=1,2,\dots$, we define a Markov chain with the state space $\Y_n$ in
the following way. Given a diagram $\la\in\Y_n$ we apply first the up
transition probabilities $p^\up(\la,\nu)$ and get a random diagram
$\nu\in\Y_{n+1}$. Then we come back to $\Y_n$ by using the down transition
probabilities $p^\down(\nu,\kappa)$. The composition $\la\to\nu\to\kappa$
constitutes a single step of the chain.

In other words, for two diagrams $\la,\kappa\in\Y_n$, the probability of the
one-step transition $\la\to\kappa$ is equal to
\begin{equation}\label{tag2.4}
\Prob\{\la\to\kappa\}=\sum_{\nu:\,
\la\nearrow\nu\searrow\kappa}p^\up(\la,\nu)p^\down(\nu,\kappa).
\end{equation}

We call this chain the {\it up-down Markov chain\/} of level $n$ associated
with the two systems $p^\down$ and $p^\up$ of transition probabilities
\end{definition}

Likewise, one could introduce the down-up chains using the superposition in the
inverse order, $p^\down\circ p^\up$, but we will not use them.

\begin{definition}[\rm The graph $\wt\Y_n$]\label{2.4}
For any $n=1,2,\dots$, introduce
the graph $\wt\Y_n$ whose vertices are diagrams $\la\in\Y_n$ and whose edges
are couples of distinct diagrams $\la,\kappa\in\Y_n$ for which there exists
$\nu\in\Y_{n+1}$ such that $\la\nearrow\nu\searrow\kappa$. The latter condition
is equivalent to saying that $\kappa$ can be obtained from $\la$ by displacing
a single box to a new position. Note that this is a minimal possible
transformation of a Young diagram preserving the number of boxes. One more
equivalent formulation is as follows: Two diagrams $\la$ and $\kappa$ form an
edge in the graph $\wt\Y_n$ if their symmetric difference $\la\triangle\kappa$
consists of precisely two boxes.
\end{definition}

The up-down chain of level $n$ may be viewed as a nearest neighbor random walk
on the graph $\wt\Y_n$.

\begin{proposition}\label{2.5} For any $n=1,2,\dots$, the up-down Markov chain on
$\Y_n$ determined by \eqref{tag2.4} has a unique stationary distribution, which
is the measure $M^{(n)}$. Moreover, $M^{(n)}$ is the symmetrizing measure, so
that the chain is reversible in the stationary regime.
\end{proposition}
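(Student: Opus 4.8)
The plan is to prove the detailed balance (reversibility) condition with respect to $M^{(n)}$; stationarity then follows immediately by summation, and ``symmetrizing measure'' is just another name for detailed balance. The entire argument rests on a single reciprocity identity between the up and down probabilities, which is nothing but Bayes' rule applied to the underlying Markov measure $\mathcal M$. Writing the joint law of two consecutive diagrams in two ways gives, for every edge $\la\nearrow\nu$,
$$
M^{(n)}(\la)\,p^\up(\la,\nu)=\Prob\{\la(n)=\la,\ \la(n+1)=\nu\}=M^{(n+1)}(\nu)\,p^\down(\nu,\la).
$$
This relation (valid because $M^{(n)}(\la)>0$ and $M^{(n+1)}(\nu)>0$ by the standing assumption of Definition \ref{2.3}) is the only nontrivial ingredient.

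I would then substitute it into the one-step kernel \eqref{tag2.4}. Starting from
$$
M^{(n)}(\la)\,\Prob\{\la\to\ka\}=\sum_{\nu:\,\la\nearrow\nu\searrow\ka}M^{(n)}(\la)\,p^\up(\la,\nu)\,p^\down(\nu,\ka)
$$
and replacing the factor $M^{(n)}(\la)\,p^\up(\la,\nu)$ by $M^{(n+1)}(\nu)\,p^\down(\nu,\la)$ via the reciprocity identity, I obtain
$$
M^{(n)}(\la)\,\Prob\{\la\to\ka\}=\sum_{\nu:\,\nu\searrow\la,\ \nu\searrow\ka}M^{(n+1)}(\nu)\,p^\down(\nu,\la)\,p^\down(\nu,\ka).
$$
Both the index set $\{\nu:\nu\searrow\la,\ \nu\searrow\ka\}$ and the summand are manifestly invariant under the interchange $\la\leftrightarrow\ka$, so the right-hand side equals $M^{(n)}(\ka)\,\Prob\{\ka\to\la\}$. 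This is exactly the detailed balance condition, showing that $M^{(n)}$ symmetrizes the chain and hence that the chain is reversible; stationarity of $M^{(n)}$ then follows by summing over $\la$ and using that each row of the transition matrix sums to $1$.

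It remains to settle uniqueness. Since $\Y_n$ is finite, I would deduce it from irreducibility. A one-step transition $\la\to\ka$ has positive probability precisely when some common cover $\nu$ with $\la\nearrow\nu\searrow\ka$ carries positive up and down weights; for the concrete down probabilities $p^\down_\tth$ of the paper these are strictly positive on every edge, and by the reciprocity identity the up probabilities are then positive on every edge as well. Thus positivity of $\Prob\{\la\to\ka\}$ reduces to adjacency of $\la$ and $\ka$ in the graph $\wt\Y_n$ of Definition \ref{2.4}, and irreducibility becomes connectivity of $\wt\Y_n$. This connectivity is the only point that is not purely formal, though it is elementary and classical: any diagram with at least two nonempty rows is joined by one edge of $\wt\Y_n$ to a diagram closer to the single-row diagram $(n)$, obtained by deleting the last box of the bottom row and appending a box to the first row (both operations keep the diagram admissible, and their common cover $\nu$ is the first row extended). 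Iterating connects every vertex to $(n)$, so $\wt\Y_n$ is connected, the chain is irreducible, and $M^{(n)}$ is its unique stationary distribution. I expect the detailed balance computation to be automatic once the reciprocity identity is in hand, with this connectivity verification being the sole genuinely combinatorial step.
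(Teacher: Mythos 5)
Your proposal is correct and follows essentially the same route as the paper: the heart of both arguments is the reciprocity identity $M^{(n)}(\la)p^\up(\la,\nu)=M^{(n+1)}(\nu)p^\down(\nu,\la)$ (the paper's \eqref{tag2.6}), which rewrites $M^{(n)}(\la)\Prob\{\la\to\ka\}$ as the manifestly symmetric sum $\sum_{\nu}M^{(n+1)}(\nu)p^\down(\nu,\la)p^\down(\nu,\ka)$, with uniqueness obtained from connectivity of the graph $\wt\Y_n$. The only cosmetic differences are that you deduce stationarity from detailed balance rather than directly from \eqref{tag2.1}--\eqref{tag2.2}, and that you spell out the connectivity and edge-positivity details which the paper leaves as ``easy to check.''
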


\begin{proof} The fact that $M^{(n)}$ is a stationary distribution follows from
the recurrence relations \eqref{tag2.1} and \eqref{tag2.2}. Indeed,
\eqref{tag2.1} shows that the transition $\la\to\nu$ transforms $M^{(n)}$ to
$M^{(n+1)}$, and \eqref{tag2.2} shows that $\nu\to\kappa$ returns $M^{(n+1)}$
back to $M^{(n)}$.

It is easy to check that the graph $\wt\Y_n$ is connected so that all the
states of the chain are communicating. This proves the uniqueness statement.

The last statement means that
$$
M^{(n)}(\la)\Prob\{\la\to\kappa\}=M^{(n)}(\kappa)\Prob\{\kappa\to\la\}
$$
for any edge $\{\la,\kappa\}$ of the graph $\wt\Y_n$. By virtue of
\eqref{tag2.4}, this can be written as
\begin{equation}\label{tag2.5}
\sum_{\nu}M^{(n)}(\la)p^\up(\la,\nu)p^\down(\nu,\kappa)
=\sum_{\nu}M^{(n)}(\kappa)p^\up(\kappa,\nu)p^\down(\nu,\la).
\end{equation}
Observe that
\begin{equation}\label{tag2.6}
M^{(n)}(\la)p^\up(\la,\nu)=M^{(n+1)}(\nu)p^\down(\nu,\la).
\end{equation}
Indeed, by the very definition of the up and down probabilities,  the both
sides of \eqref{tag2.6} are equal to
$$
\Prob\{\la(n)=\la, \;\la(n+1)=\nu\}.
$$
Now \eqref{tag2.6} and the similar equality with $\la$ replaced by $\kappa$
imply that the both sides of \eqref{tag2.5} are equal to
$$
\sum_{\nu}M^{(n+1)}(\nu)p^\down(\nu,\la)p^\down(\nu,\kappa).
$$

\end{proof}

\section{Down transition probabilities in the Young graph with Jack edge
multiplicities}\label{3}

In this section we introduce a special system $p^\down_\tth$ of down transition
probabilities, which are associated with the Jack symmetric functions. Here
$\tth>0$ is the ``Jack parameter'',  an arbitrary positive number.

Let us start with the particular case $\tth=1$, when the down probabilities
have a simple representation-theoretic meaning. Recall that the diagrams
$\la\in\Y_n$ parametrize the irreducible representations of the group $S_n$.
Let $\dim\la$ stand for the dimension of the corresponding representation of
$S_n$. Then
\begin{equation}\label{tag3.1}
p^\down_1(\la,\mu)=\frac{\dim\mu}{\dim\la}\,, \qquad \mu\nearrow\la.
\end{equation}
The classic {\it Young rule\/} says that the restriction of the irreducible
representation indexed by $\la\in\Y_n$ to the subgroup $S_{n-1}$ splits into
the multiplicity free direct sum of the irreducible representations indexed by
the diagrams $\mu\nearrow\la$. Therefore, for any $\la$,
$$
\sum_{\mu:\,\mu\nearrow\la}\dim\mu=\dim\la,
$$
which explains why the numbers \eqref{tag3.1} sum to 1.

Thus, one can say that the probabilities \eqref{tag3.1} reflect the {\it
branching rule of irreducible representations\/} of symmetric groups.

We proceed to the definition of the down probabilities for general $\tth>0$. We
will present two equivalent formulations. The first one is stated in terms of
the {\it Jack symmetric functions\/}; it  explains the origin of the
probabilities in question. The second one has the advantage of being completely
elementary and will be used in the computations.

For more detail about the notions that will be used below, see \cite{Ma}.

Let $\La$ denote the algebra of symmetric functions over $\R$. It is isomorphic
to the algebra of polynomials with countably many variables $p_1,p_2,\dots$
which are identified with the Newton power sums. The canonical grading of the
algebra $\La$ is specified by setting $\deg p_i=i$. The $n$th homogeneous
component of the algebra, denoted as $\La_n$, has dimension equal to $|\Y_n|$.

All natural bases in $\La$ are indexed by partitions. Of particular importance
for us is the basis $\{\mathcal P_{\mu;\tth}\}_{\mu\in\Y}$ of the Jack
symmetric functions. These are homogeneous elements, the degree of $\mathcal
P_{\mu;\tth}$ equals $|\mu|$.  Recall that Macdonald \cite{Ma} uses as the
parameter the inverse quantity $\tth^{-1}$.

The starting point of the definition is the simplest case of the {\it Pieri
rule\/}: for any $\mu\in\Y$
$$
\mathcal P_{\mu;\tth}\cdot p_1
=\sum_{\la:\,\la\searrow\mu}\varkappa_{\,\tth}(\mu,\la)\mathcal P_{\la;\tth},
$$
where $\varkappa_{\,\tth}(\mu,\la)$ are certain strictly positive numbers
called the {\it Jack formal edge multiplicities\/}. A {\it standard Young
tableau\/} of shape $\la$ is a finite monotone path in the Young graph,
$\varnothing\nearrow\dots\nearrow\la$, starting at $\varnothing$ and ending at
$\la$; its {\it weight\/} is defined as the product of the formal
multiplicities $\varkappa_{\,\tth}(\,\cdot\,,\,\cdot\,)$ of its edges. The {\it
$\tth$-dimension\/} $\dim_\tth\la$ of a diagram $\la$ is defined as the sum of
the weights of all standard tableaux of the shape $\la$. Now we are in a
position to state the definition:

\begin{definition}\label{3.1}
For $\mu\nearrow\la$ we set
\begin{equation}\label{tag3.2}
p^\down_\tth(\la,\mu)=\frac{\dim_\tth\mu\cdot\varkappa_{\,\tth}(\mu,\la)}
{\dim_\tth\la}\,.
\end{equation}
In words: Consider the finite set of all directed paths from $\varnothing$ to
$\la$ and make it a probability space by assigning to each path the probability
proportional to its weight; then $p^\down_\tth(\la,\mu)$ is the probability
that the random path passes through $\mu$. Note that \eqref{tag3.2} is the same
as \eqref{tag1.13}.
\end{definition}

\begin{remark}\label{3.2}
The following duality relation holds :
$$
p^\down_\tth(\la,\mu)=p^\down_{\tth^{-1}}(\la',\mu'), \qquad 0<\tth<+\infty,
$$
where $\la'$ and $\mu'$ are the transposed diagrams.
\end{remark}

\begin{remark}\label{3.3} The construction of the present section first appeared in the
joint paper \cite{KOO}. However, the idea is implicitly contained in an earlier
work by Kerov (see \cite[\S7]{Ke1}).
\end{remark}

The alternative definition of the down transition probabilities is given in the
next section.

\section{Kerov interlacing coordinates and the second definition of the down
transition probabilities}\label{4}

The present section is essentially an extraction from Kerov's paper \cite{Ke4},
with minor modifications.

Let $\la\in\Y$ be a Young diagram. Recall that we draw Young diagrams according
to the so-called ``English picture'' \cite{Ma}, where the first coordinate axis
(the row axis) is directed downwards and the second coordinate axis (the column
axis) is directed to the right. Consider the border line of $\la$ as the
directed path coming from $+\infty$ along the second (horizontal) axis, next
turning several times alternately down and to the left, and finally going away
to $+\infty$ along the first (vertical) axis. The corner points on this path
are of two types: the {\it inner corners\/}, where the path switches from the
horizontal direction to the vertical one, and the {\it outer corners\/} where
the direction is switched from vertical to horizontal. Observe that the inner
and outer corners always interlace and the number of inner corners always
exceeds by 1 that of outer corners. Let $2d-1$ be the total number of the
corners and $(r_i,s_i)$, $1\le i\le 2d-1$, be their coordinates. Here the odd
and even indices $i$ refer to the inner and outer corners, respectively.

\medskip

\begin{center}
 \scalebox{0.5}{\includegraphics{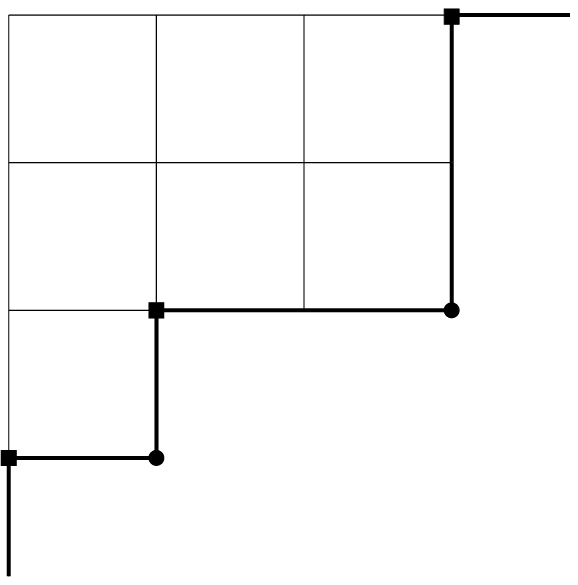}}

Figure 1. The corners of the diagram $\la=(3,3,1)$.
\end{center}

For instance, the diagram $\la=(3,3,1)$ shown on the figure has $d=3$, three
inner corners $(r_1,s_1)=(0,3)$, $(r_3,s_3)=(2,1)$, $(r_5,s_5)=(3,0)$, and two
outer corners $(r_2,s_2)=(2,3)$, $(r_4,s_4)=(3,1)$.

Fix $\tth>0$. The numbers
\begin{multline}\label{tag4.1}
 x_1:=s_1-\tth  r_1, \quad y_1:=s_2-\tth r_2,\,\dots\\
\dots,\,y_{d-1}:=s_{2d-2}-\tth r_{2d-2}, \quad x_d:=s_{2d-1}-\tth r_{2d-1}
\end{multline}
form two interlacing sequences of integers
$$
x_1>y_1>x_2>\dots>y_{d-1}>x_d
$$
satisfying the relation
\begin{equation}\label{tag4.2}
\sum_{i=1}^d x_i-\sum_{j=1}^{d-1}y_j=0.
\end{equation}

\begin{definition}\label{4.1}
The two interlacing sequences
$$
X=(x_1,\dots,x_d), \qquad Y=(y_1,\dots,y_{2d-1})
$$
as defined in \eqref{tag4.1} will be called the ($\tth$-dependent) {\it Kerov
interlacing coordinates\/} of a Young diagram $\la$. We will write $\la=(X;Y)$.
(Note that the original definition of the interlacing coordinates given in
\cite{Ke4} differs from the present one by a factor of $\tth^{-1}$, because
Kerov uses the homothetic transformation $s\mapsto\al s$ with $\al=\tth^{-1}$,
while we prefer to transform the $r$-axis. This minor difference is
inessential: all formulas in \cite{Ke4} can be easily rewritten in our
notation. The term ``anisotropic diagram'' employed in \cite{Ke4} refers to the
image of a Young diagram under a homothetic transformation of a coordinate
axis.)\footnote{In the particular case $\tth=1$, the interlacing coordinates
$(X;Y)$ were introduced in earlier Kerov's paper \cite{Ke2} and further
exploited in \cite{Ke5}; see also \cite{Ke6}. A somewhat similar
parametrization was suggested by Stanley \cite{Sta} and then employed in a
number of recent publications. Stanley's $(p;q)$ coordinates differ from the
Kerov ($\tth=1$) coordinates by a simple linear transformation. }
\end{definition}

Let $u$ be a complex variable and consider the following expansion in partial
fractions
\begin{equation}\label{tag4.3}
\dfrac{\prod\limits_{i=1}^d(u-x_i)}{\prod\limits_{j=1}^{d-1}(u-y_j)}
=u-\sum_{j=1}^{d-1} \frac{\pi^\down_j}{u-y_j}\,.
\end{equation}
Note that the constant term in the right-hand side vanishes because of
\eqref{tag4.2}. The coefficients $\pi^\down_j=\pi^\down_j(X;Y)$ are given by
the formula
\begin{equation}\label{tag4.4}
\pi^\down_j=\pi^\down_j(X;Y)=\,-\,\dfrac{\prod\limits_{i=1}^d(y_j-x_i)}
{\prod\limits_{\substack{1\le l\le d-1\\
l\ne j}}(y_j-y_l)}\,.
\end{equation}
They are strictly positive, and their sum is equal to the area of the shape
$\la$ in the modified coordinates $r'=\tth r$, $s'=s$:
\begin{equation}\label{tag4.5}
\sum_{j=1}^{d-1} \pi^\down_j=\tth|\la|=\Area(X,Y):=\prod_{1\le i\le j\le
d-1}(x_i-y_i)(y_j-x_{j+1}).
\end{equation}

Observe that there is a natural bijective correspondence between the outer
corners of $\la=(X;Y)$ and those boxes that may be removed from $\la$. Thus, we
may associate these boxes with the coordinates $y_j$.

\begin{proposition}\label{4.2}
Let $\la$ be a Young diagram, $(X;Y)$ be its $\tth$-dependent Kerov interlacing
coordinates, $\pi^\down_j=\pi^\down_j(X;Y)$ be the coefficients from\/
\eqref{tag4.3}, given by\/ \eqref{tag4.4}, and\/ $\Area(X;Y)$ be the quantity
defined in\/ \eqref{tag4.5}. Let\/ $\square_j$ denote the corner box in $\la$
associated with the $j$th coordinate $y_j$ in $Y$. Then the $\tth$-dependent
down transition probabilities as defined in \eqref{tag3.2} are given by the
following elementary expression
\begin{equation}\label{tag4.6}
p^\down_\tth(\la,\la\setminus\square_j)=\frac{\pi^\down_j}{\Area(X;Y)}\,.
\end{equation}
\end{proposition}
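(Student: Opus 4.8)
The plan is to connect the combinatorial definition \eqref{tag3.2} with the interlacing-coordinate expression \eqref{tag4.6} by computing the ratio $\dim_\tth\mu\cdot\varkappa_{\,\tth}(\mu,\la)/\dim_\tth\la$ and identifying it with $\pi^\down_j/\Area(X;Y)$. First I would fix a removable box $\square_j$ of $\la$, so that $\mu=\la\setminus\square_j$, and recall the content/hook-product formula for the Jack $\tth$-dimension: $\dim_\tth\la=\tth|\la|!\,/\,H_\tth(\la)$, where $H_\tth(\la)$ is the appropriate product of $\tth$-deformed hook lengths (upper and lower hooks) over the boxes of $\la$. Then the ratio $\dim_\tth\mu/\dim_\tth\la$ reduces, after the factorials cancel up to the factor $\tth|\la|$, to the ratio of hook products $H_\tth(\la)/H_\tth(\mu)$, which is supported only on the boxes in the row and column of $\square_j$. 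Multiplying by the Pieri coefficient $\varkappa_{\,\tth}(\mu,\la)$, which has its own explicit hook-type expression, I expect massive cancellation, leaving a manageable finite product indexed by the corners of $\la$.

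The key computation is therefore to show that this surviving hook product equals $\pi^\down_j/\Area(X;Y)$, with $\pi^\down_j$ given by the partial-fraction residue \eqref{tag4.4}. The natural bridge is to rewrite the $\tth$-hook data of $\square_j$ and its arm/leg in terms of the Kerov coordinates $x_i,y_l$: the differences $y_j-x_i$ and $y_j-y_l$ appearing in \eqref{tag4.4} should match, up to sign, the $\tth$-deformed arm and leg contributions collected along the row and column through the outer corner associated with $y_j$. Concretely, each factor $(y_j-x_i)$ corresponds to a contribution from an inner corner (equivalently, an addable position), while each $(y_j-y_l)$ with $l\neq j$ corresponds to another outer corner; translating the content coordinates $s-\tth r$ into these differences turns the hook product into exactly the rational expression \eqref{tag4.4}. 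The denominator $\Area(X;Y)=\tth|\la|$ from \eqref{tag4.5} then absorbs the overall $\tth|\la|$ factor coming from the factorial count.

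The main obstacle I anticipate is the bookkeeping in the translation between two genuinely different descriptions of a Young diagram: the box-by-box hook-length formalism of Jack functions on one side, and the global corner-coordinate (partial-fraction) formalism of Kerov on the other. In particular, establishing that the product of $\tth$-hooks over the row and column of $\square_j$ telescopes into the residue formula \eqref{tag4.4} requires carefully indexing the inner and outer corners that lie above and to the left of $\square_j$ versus below and to its right, and verifying the signs work out so that $\pi^\down_j>0$. An alternative that may shorten this step is to avoid the hook formula entirely and instead derive \eqref{tag4.6} directly from the Pieri rule by interpreting $\sum_\mu p^\down_\tth(\la,\mu)=1$ together with the partial-fraction identity $\sum_j \pi^\down_j=\Area(X;Y)$ from \eqref{tag4.5}: one checks that both $\{p^\down_\tth(\la,\la\setminus\square_j)\}$ and $\{\pi^\down_j/\Area\}$ are probability distributions on the removable boxes, and then matches them termwise using the generating identity \eqref{tag4.3}. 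Since the section is described as an extraction from Kerov's paper, I expect the intended proof to follow Kerov's residue argument, reducing the statement to the partial-fraction expansion \eqref{tag4.3}--\eqref{tag4.4} rather than to an extended hook-length manipulation.
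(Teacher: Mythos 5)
Your overall orientation is right: the paper itself gives no argument for Proposition \ref{4.2} --- its entire proof is the citation ``See \cite[Section 7]{Ke4}'' --- and your main route (hook-product formula for $\dim_\tth$, explicit hook-type Pieri coefficient $\varkappa_{\,\tth}(\mu,\la)$, cancellation of all factors except those along the row and column of $\square_j$, translation of the survivors into the corner coordinates $x_i,y_l$) is indeed the substance of Kerov's residue argument, as your last sentence anticipates. The problem is that, as written, your first route is a plan rather than a proof: the assertion that the surviving hook factors telescope into the residue expression \eqref{tag4.4} divided by $\Area(X;Y)=\tth|\la|$ \emph{is} the proposition, and you never carry it out --- you only record that you ``expect massive cancellation'' and that the factors ``should match, up to sign,'' while yourself identifying the corner bookkeeping as the main obstacle. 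A smaller but not harmless point: the dimension formula you start from, $\dim_\tth\la=\tth|\la|!/H_\tth(\la)$, is wrong as stated; with the standard conventions one has $\dim_\tth\la=|\la|!\big/\prod_{s\in\la}\bigl(a(s)+\tth\,l(s)+1\bigr)$, with no prefactor $\tth$ (check $\la=(2)$: $\dim_\tth\la=1$). Since the whole computation hinges on tracking exactly where the normalization $\tth|\la|$ comes from, getting this input right is part of the work you defer.

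The more serious defect is the ``alternative that may shorten this step'': it cannot work. That $\{p^\down_\tth(\la,\la\setminus\square_j)\}_j$ and $\{\pi^\down_j/\Area(X;Y)\}_j$ are both probability distributions on the removable boxes of $\la$ gives only equality of total masses; for any diagram with two or more removable corners there are infinitely many probability vectors on that set, so no termwise identification can follow from normalization. Nor can the ``generating identity'' \eqref{tag4.3} supply the missing link: that partial-fraction expansion is the \emph{definition} of the numbers $\pi^\down_j$ and involves only the corner data $(X;Y)$; it carries no information about the Jack-side quantities $\dim_\tth$ and $\varkappa_{\,\tth}$ entering \eqref{tag3.2}, so there is nothing in it to match against. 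Any correct proof must open up the Jack formalism --- the hook formulas for $\dim_\tth$ and for the Pieri coefficients, or an equivalent induction on $|\la|$ --- and that is precisely the step both of your routes leave undone.
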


\begin{proof} See \cite[Section 7]{Ke4}. (Here and below I do not give more
precise references to claims in \cite{Ke4} because they are numbered
differently in the journal version of the paper and its preprint version posted
on arXiv.)
\end{proof}

Note that the right-hand side of \eqref{tag4.6} is a rational fraction in the
Kerov coordinates, and this function {\it does not depend on\/} $\tth$; the
dependence on $\tth$ is hidden in the Kerov coordinates themselves.

Thus, we obtain an alternative description of the down transition
probabilities.

\begin{remark}\label{4.3}
There is an obvious relation between the set of Kerov coordinates with
parameter $\tth$ of a diagram and the set of Kerov coordinates with reversed
parameter $\tth^{-1}$ of the transposed diagram. Specifically, if the former
set is $(X;Y)=\{x_i\}\cup\{y_j\}$ then the latter set is
$(-\tth^{-1}X;-\tth^{-1}Y)=\{-\tth^{-1}x_i\}\cup\{-\tth^{-1}y_j\}$, with the
reversed enumeration. This fact provides a simple proof of the duality stated
in Remark \ref{3.2}.
\end{remark}

\section{The up transition probabilities of the z-measures}\label{5}

There is a host of Markov measures on $\mathcal T$ consistent with the down
probability system $p_\tth^\down$. In this section we exhibit a distinguished
family of Markov measures which depend on $\tth$ and some additional parameters
$z$ and $z'$. We do this by specifying the corresponding up transition
probabilities $p^\up_{\tth,z,z'}$.

Let $\la\in\Y$ be a Young diagram and $(X;Y)$ be its Kerov interlacing
coordinates as defined in \eqref{tag4.1}. Reverse the expression in the
left-hand side of \eqref{tag4.3} and expand it again in partial fractions:
\begin{equation}\label{tag5.1}
\frac{\prod\limits_{j=1}^{d-1}(u-y_j)}{\prod\limits_{i=1}^d(u-x_i)}=\sum_{i=1}
^d\frac { \pi^\up_i } { u-x_i}\,.
\end{equation}
Here the coefficients $\pi^\up_i$ are given by the formula
\begin{equation}\label{tag5.2}
\pi^\up_i=\pi^\up_i(X;Y)=\frac{\prod\limits_{j=1}^{d-1}(x_i-y_j)}
{\prod\limits_{\substack{1\le l\le d\\
l\ne i}}(x_i-x_l)}\,, \qquad i=1,\dots,d.
\end{equation}
Recall that the dependence of the right-hand side on $\tth$ is hidden in the
definition \eqref{tag4.1} of the Kerov coordinates.

Those boxes that may be appended to $\la$ are associated, in a natural way,
with the inner corners of the boundary of $\la$. Consequently, we may also
associate these boxes with the $x$'s: $\square_i\leftrightarrow x_i$.

Assume first that $z$ and $z'$ are arbitrary complex numbers such that
$zz'+\tth n\ne0$ for all $n=0,1,2,\dots$, and set
\begin{equation}\label{tag5.3}
p^\up_{\tth,z,z'}(\la,\la\cup\square_i)=\frac{(z+x_i)(z'+x_i)}{zz'+\tth
n}\cdot\pi^\up_i\,, \qquad n=|\la|,
\end{equation}
where the coefficients $\pi^\up_i$ are the same as in \eqref{tag5.1},
\eqref{tag5.2}.

\begin{proposition}\label{5.1} For any fixed $\la\in\Y_n$, these numbers sum up
to\/ $1$.
\end{proposition}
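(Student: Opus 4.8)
The plan is to clear the common denominator $zz'+\tth n$ and reduce the proposition to the single identity
\[
\sum_{i=1}^d (z+x_i)(z'+x_i)\,\pi^\up_i \;=\; zz'+\tth n .
\]
Expanding $(z+x_i)(z'+x_i)=zz'+(z+z')x_i+x_i^2$, the left-hand side equals $zz'\,m_0+(z+z')\,m_1+m_2$, where $m_k:=\sum_{i=1}^d x_i^{\,k}\,\pi^\up_i$ for $k=0,1,2$. So everything comes down to computing the three ``moments'' $m_0,m_1,m_2$ of the weights $\pi^\up_i$ and showing that they equal $1$, $0$, and $\tth n$ respectively.

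The key observation is that these moments are precisely the coefficients in the Laurent expansion at $u=\infty$ of the generating function in \eqref{tag5.1}. Writing $R(u)$ for its left-hand side and expanding each summand as $\pi^\up_i/(u-x_i)=\sum_{k\ge0}x_i^{\,k}\,\pi^\up_i\,u^{-k-1}$, one gets $R(u)=m_0 u^{-1}+m_1 u^{-2}+m_2 u^{-3}+\cdots$. The value $m_0=1$ is immediate, since the numerator and denominator of $R(u)$ are monic of degrees $d-1$ and $d$, so $R(u)\sim u^{-1}$. To read off $m_1$ and $m_2$ I would pass to the reciprocal $1/R(u)$, which is exactly the left-hand side of \eqref{tag4.3}. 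On one hand, using $m_0=1$, one computes $1/R(u)=u-m_1+(m_1^2-m_2)u^{-1}+\cdots$; on the other hand \eqref{tag4.3} presents the same function as $u-\sum_{j}\pi^\down_j/(u-y_j)$, whose expansion is $u-(\sum_j\pi^\down_j)\,u^{-1}-\cdots$ with \emph{no constant term} (this vanishing of the constant term is exactly where the interlacing relation \eqref{tag4.2} enters). Matching constant terms forces $m_1=0$, and matching the $u^{-1}$ coefficients then gives $m_2=\sum_{j=1}^{d-1}\pi^\down_j$.

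The one genuinely substantive input is the evaluation of $m_2$, and the previous step has reduced it to the sum of the down weights, which is already known: by \eqref{tag4.5} one has $\sum_{j=1}^{d-1}\pi^\down_j=\Area(X;Y)=\tth|\la|=\tth n$. Hence $m_2=\tth n$, and substituting $m_0=1$, $m_1=0$, $m_2=\tth n$ yields $\sum_i(z+x_i)(z'+x_i)\pi^\up_i=zz'+\tth n$; dividing by $zz'+\tth n$, which is nonzero by the standing hypothesis, proves the claim. Thus the computation itself is a routine Laurent expansion, and the real content is structural: the interlacing constraint \eqref{tag4.2} (which kills $m_1$) together with the area formula \eqref{tag4.5} (which fixes $m_2$) conspire to produce exactly the normalizing constant $zz'+\tth n$ appearing in \eqref{tag5.3}. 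I expect the only point needing care to be the reciprocal-expansion bookkeeping that couples the two partial-fraction identities \eqref{tag5.1} and \eqref{tag4.3} and identifies $m_2$ with $\sum_j\pi^\down_j$.
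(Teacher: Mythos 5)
Your proof is correct and takes essentially the same route as the paper's: the paper likewise reduces the claim to the three moment identities $\sum_i\pi^\up_i=1$, $\sum_i x_i\pi^\up_i=0$, $\sum_i x_i^2\pi^\up_i=\tth n$, and verifies them from the generating-function identity \eqref{tag5.1} together with \eqref{tag4.2} and \eqref{tag4.5}, citing Kerov for the details. Your Laurent-expansion and reciprocal-series bookkeeping (linking \eqref{tag5.1} to \eqref{tag4.3}) simply supplies those omitted details.
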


\begin{proof} Since $(z+x_i)(z'+x_i)=zz'+(z+z')x_i+x_i^2$, the claim is
equivalent to the following three equalities:
$$
\sum_{i=1}^d\pi^\up_i=1, \qquad \sum_{i=1}^d x_i\pi^\up_i=0, \qquad
\sum_{i=1}^d x_i^2\pi^\up_i=\tth n.
$$
These equalities are verified directly from \eqref{tag5.1} using the relation
\eqref{tag4.2} and the expression of $\tth n$ through $(X;Y)$, see
\eqref{tag4.5}. For more detail, see \cite[Section 6]{Ke4}.
\end{proof}

Let us define the numbers $M^{(n)}(\la)=M^{(n)}_{\tth,z,z'}(\la)$ from the
recurrence relations \eqref{tag2.1} by setting $p^\up=p^\up_{\tth,z,z'}$ and
using the initial condition $M^{(0)}(\varnothing)=1$. Then, using Proposition
\ref{5.1} and induction on $n$, one sees that for all $n$,
$$
\sum_{\la\in\Y_n}M^{(n)}_{\tth,z,z'}(\la)=1.
$$

\begin{proposition}\label{5.2} The numbers $M^{(n)}(\la)=M^{(n)}_{\tth,z,z'}(\la)$
just defined  are consistent with the down probabilities
$p^\down_\tth(\la,\mu)$. That is, setting $p^\down=p^\down_\tth$, the relations
\eqref{tag2.2} are satisfied.
\end{proposition}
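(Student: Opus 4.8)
The plan is to verify the consistency relation \eqref{tag2.2} directly from the definitions, reducing it to an identity expressed purely in the Kerov interlacing coordinates. Fix $n\ge1$ and a diagram $\mu\in\Y_{n-1}$ with interlacing coordinates, say $(X';Y')$. I want to show that $M^{(n-1)}(\mu)=\sum_{\la:\,\la\searrow\mu}M^{(n)}(\la)p^\down_\tth(\la,\mu)$. The first move is to express each $M^{(n)}(\la)$ on the right, via the definition of the $M^{(n)}$'s through the up-relations \eqref{tag2.1}, in terms of the one-level-lower measures: since $M^{(n)}(\la)=\sum_{\kappa:\,\kappa\nearrow\la}M^{(n-1)}(\kappa)p^\up_{\tth,z,z'}(\kappa,\la)$, substituting this into the target sum gives a double sum over pairs $(\kappa,\la)$ with $\kappa\nearrow\la\searrow\mu$. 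The idea is then to exchange the order of summation and isolate the diagonal term $\kappa=\mu$ from the off-diagonal terms $\kappa\ne\mu$.

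The off-diagonal terms correspond exactly to edges $\{\kappa,\mu\}$ of the graph $\wt\Y_{n-1}$ (two distinct diagrams joined through a common $\la\in\Y_n$), while the diagonal term $\kappa=\mu$ runs over all $\la$ with $\mu\nearrow\la\searrow\mu$, i.e.\ add a box and remove it. So the proof naturally splits into two claims. First, for the diagonal contribution, I expect to show that
$$
M^{(n-1)}(\mu)\sum_{\la:\,\mu\nearrow\la}p^\up_{\tth,z,z'}(\mu,\la)\,p^\down_\tth(\la,\mu)=M^{(n-1)}(\mu)\,q(\mu)
$$
for some quantity $q(\mu)$, and second, that the total off-diagonal contribution equals $M^{(n-1)}(\mu)\,(1-q(\mu))$, so that the two combine to give exactly $M^{(n-1)}(\mu)$. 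Equivalently, after dividing through by $M^{(n-1)}(\mu)$ (which is positive), the whole statement becomes the single normalization identity
$$
\sum_{\la\in\Y_n}\,\sum_{\substack{\kappa:\,\kappa\nearrow\la,\;\la\searrow\mu}}\frac{M^{(n-1)}(\kappa)}{M^{(n-1)}(\mu)}\,p^\up_{\tth,z,z'}(\kappa,\la)\,p^\down_\tth(\la,\mu)=1,
$$
and the remaining task is to evaluate the ratios $M^{(n-1)}(\kappa)/M^{(n-1)}(\mu)$ for neighboring $\kappa,\mu$ and feed in the explicit formulas \eqref{tag5.2}, \eqref{tag5.3} for $p^\up$ and \eqref{tag4.4}, \eqref{tag4.6} for $p^\down$.

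The computation of these ratios is where the Kerov-coordinate formalism of Section \ref{4} does the real work: both $\kappa$ and $\mu$ sit in $\Y_{n-1}$ and differ by moving a single box, so their interlacing coordinates differ in a controlled way, and the detailed-balance-type relation \eqref{tag2.6} (rewritten for level $n-1$) should let me express $M^{(n-1)}(\kappa)/M^{(n-1)}(\mu)$ as a ratio of up- and down-probabilities along the connecting path. Once everything is rewritten in the variables $x_i,y_j$, the identity to be checked is a rational-function identity in finitely many variables, provable by the same partial-fraction and residue techniques already used in the proof of Proposition \ref{5.1}. The main obstacle I anticipate is bookkeeping: correctly matching up which $x$- and $y$-coordinates of $\mu$ are perturbed when a box is added to form $\la$ and then removed to reach $\kappa$, and keeping the factors $(z+x_i)(z'+x_i)/(zz'+\tth n)$ aligned across the two levels. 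I expect no genuinely new analytic input beyond \eqref{tag4.2}, \eqref{tag4.5}, and the partial-fraction expansions \eqref{tag4.3}, \eqref{tag5.1}; the heart of the argument is purely the algebraic reorganization of the double sum plus the residue identity, with the positivity of all the $M^{(n)}(\la)$ (guaranteed by Proposition \ref{5.1}) ensuring the division step is legitimate.
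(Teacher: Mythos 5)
Your proposal has a genuine circularity at its central step. You propose to evaluate the ratios $M^{(n-1)}(\kappa)/M^{(n-1)}(\mu)$ for neighboring diagrams by invoking the detailed-balance relation \eqref{tag2.6}. But \eqref{tag2.6} was derived (in the proof of Proposition \ref{2.5}) only under the hypothesis that both $p^\up$ and $p^\down$ are the conditional transition probabilities of one and the same Markov measure $\mathcal M$ on $\mathcal T$: both sides of \eqref{tag2.6} were identified with $\Prob\{\la(n)=\la,\,\la(n+1)=\nu\}$. In the situation of Proposition \ref{5.2} this hypothesis is exactly what is at stake: the measures $M^{(n)}$ are defined recursively from $p^\up_{\tth,z,z'}$ alone, while $p^\down_\tth$ is defined independently (via Jack multiplicities, equivalently via the Kerov coordinates), and the proposition asserts precisely that the two systems are compatible. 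The relation
$$
M^{(n-1)}(\kappa)\,p^\up_{\tth,z,z'}(\kappa,\la)=M^{(n)}(\la)\,p^\down_\tth(\la,\kappa)
$$
is not available before the consistency is proved --- indeed it is a (pointwise stronger) form of the very statement you are proving. So the step that was supposed to do ``the real work'' assumes the conclusion.

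Even apart from this, the closing claim that the residual identity needs ``no genuinely new analytic input'' beyond the computations of Proposition \ref{5.1} substantially understates the difficulty. Proposition \ref{5.1} amounts to three elementary moment identities for the coefficients $\pi^\up_i$; by contrast, the consistency of $p^\down_\tth$ with the measures generated by $p^\up_{\tth,z,z'}$ is the main theorem of Kerov's paper \cite{Ke4}, and the present paper does not reprove it: its ``proof'' of Proposition \ref{5.2} is a citation to the end of \cite{Ke4} (with \cite{BO3} as an alternative source, where the argument uses an explicit product formula for $M^{(n)}_{\tth,z,z'}(\la)$ together with interpolation-polynomial techniques). A non-circular version of your plan would first have to establish, by induction on $n$, a closed multiplicative formula for the recursively defined $M^{(n)}$ --- at which point the detailed balance can be checked directly from explicit expressions --- but that is a different and considerably longer argument than the one you sketch.
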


\begin{proof} This the main result of \cite{Ke4}; it is established
at the very end of that paper. Another proof is given in \cite{BO3}.

\end{proof}

\begin{proposition}\label{5.3} The quantities $p^\up_{\tth,z,z'}(\la,\nu)$ defined
by \eqref{tag5.3} are strictly positive for all edges $\la\nearrow\nu$ of the
Young graph if and only if one of the following two conditions holds\/{\rm:}

{\rm(i)} $z\in\C\setminus (\Z_{\le0}+\tth\cdot\Z_{\ge0})$ and $z'=\bar z$.

{\rm(ii)} $\tth$ is rational and both $z$ and $z'$ are real numbers lying in
one of the open intervals between two consecutive numbers from the lattice
$\Z+\tth\cdot\Z\subset\R$.
\end{proposition}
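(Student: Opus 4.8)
The plan is to reduce the positivity of the full expression \eqref{tag5.3} to a sign condition on its scalar prefactor and then to analyze that condition as the edge $\la\nearrow\la\cup\square_i$ ranges over the entire Young graph. The first point is that the coefficients $\pi^\up_i$ of \eqref{tag5.2} are strictly positive: this follows from the interlacing $x_1>y_1>\dots>x_d$ by the same residue-of-a-partial-fraction argument that gives the positivity of the $\pi^\down_j$ in Section \ref{4}. Consequently $p^\up_{\tth,z,z'}(\la,\la\cup\square_i)>0$ for every edge if and only if the scalar factor $(z+x_i)(z'+x_i)/(zz'+\tth n)$ is strictly positive for every diagram $\la$, with $n=|\la|$, and every inner corner, indexed by $x_i$.

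Next I would pin down the exact range of the pairs $(x_i,n)$. Writing $x_i=s-\tth r$ with $(r,s)$ an inner corner, a direct check shows that every pair $(r,s)\in\Z_{\ge0}\times\Z_{\ge0}$ is an inner corner of some diagram, and indeed of diagrams of arbitrarily large size $n$ (enlarge a given diagram without destroying the corner). Hence the attainable values are exactly $S:=\Z_{\ge0}+\tth\,\Z_{\le0}$, each occurring for all large $n$. Letting $n\to\infty$ with $x$ fixed, so that $zz'+\tth n\to+\infty$, forces $(z+x)(z'+x)>0$ for every $x\in S$; feeding this back, for each $n$ there is an attainable $x$, so $zz'+\tth n>0$ for all $n\ge0$, and in particular $zz'>0$. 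Once $zz'>0$ the denominators are automatically positive, so the whole problem collapses to the pair of requirements $(z+x)(z'+x)>0$ for all $x\in S$ and $zz'>0$. The ``if'' direction is then a quick verification: under (i) one has $z'+x=\overline{z+x}$, so $(z+x)(z'+x)=|z+x|^2>0$ because $z\notin -S=\Z_{\le0}+\tth\Z_{\ge0}$; under (ii) the points $z,z'$ lie in a common gap of the lattice $\Z+\tth\Z\supset S$, so $z+x$ and $z'+x$ carry the same nonzero sign for each $x\in S$, while $zz'>0$ since no gap contains $0$.

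For the harder ``only if'' direction I would use that $\Z_{\ge0}\subset S$: since $(z+s)(z'+s)\in\R$ for $s=0,1$, both $z+z'$ and $zz'$ are real, so $z,z'$ are either complex conjugates or both real. The non-real conjugate case is precisely the non-real part of (i). In the real case, the condition $(z+x)(z'+x)>0$ for all $x\in S$ says exactly that no point of $S$ lies in the closed interval bounded by $-z$ and $-z'$. Here the step I expect to be the crux is the structural description of $S$ according to the arithmetic of $\tth$: that $S$ is \emph{dense} in $\R$ when $\tth$ is irrational (by Weyl equidistribution of $\{-\tth r\bmod 1\}$ together with the freedom to add an arbitrarily large integer $s\ge0$), whereas $S=\Z+\tth\Z=\tfrac1q\Z$ when $\tth=p/q$ is rational in lowest terms (a Frobenius-type identity $\{qs-pr:r,s\ge0\}=\Z$). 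Granting this, density forbids any nondegenerate interval, forcing $z=z'$ with $-z\notin S$, i.e. $z\notin\Z_{\le0}+\tth\Z_{\ge0}$, which is (i); in the rational case the interval avoids $\Z+\tth\Z$ if and only if $-z,-z'$, equivalently $z,z'$, lie in a single gap of $\Z+\tth\Z$, which is (ii), the constraint $zz'>0$ coming for free since no gap straddles $0$.
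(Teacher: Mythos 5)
Your overall strategy is sound, and since the paper gives no internal proof of Proposition \ref{5.3} (it simply cites \cite[Proposition 2.2]{BO5}), any self-contained argument is necessarily its own route; yours is the natural one: the sign count via interlacing correctly gives $\pi^\up_i>0$, so positivity of all $p^\up_{\tth,z,z'}$ reduces to the sign of the prefactor $(z+x_i)(z'+x_i)/(zz'+\tth n)$ over attainable pairs $(x,n)$, and the rational/irrational dichotomy for $\tth$ then comes from your (correct) identity $\{qs-pr: r,s\ge0\}=\Z$ versus density of $S$. However, one step fails as stated: the claim that every $(r,s)\in\Z_{\ge0}\times\Z_{\ge0}$ is an inner corner of diagrams of arbitrarily large size is false for $(r,s)=(0,0)$, since that corner means $\la_1=0$, i.e.\ $\la=\varnothing$; it occurs only at $n=0$. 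Hence, when $\tth$ is irrational, the value $x=0\in S$ is attained only at $n=0$, where the edge condition is the vacuous $zz'/zz'=1>0$ and yields nothing. Your deduction that $(z+x)(z'+x)>0$ for \emph{every} $x\in S$, ``and in particular $zz'>0$'', is therefore unjustified precisely at $x=0$, so the advertised reduction of the problem to ``$(z+x)(z'+x)>0$ on $S$ and $zz'>0$'' is not proved in the irrational case. The gap is local and repairable: edge positivity does give $(z+x)(z'+x)>0$ for all nonzero $x\in S$ (and also for $x=0$ when $\tth=p/q$ is rational, since then $0=p-\tth q$ comes from the corner $(q,p)$ of large diagrams); $S\setminus\{0\}$ is still dense for irrational $\tth$, so in the real case you are forced to $z=z'$ with $-z\notin S\setminus\{0\}$, and the leftover possibility $z=z'=0$ is excluded because \eqref{tag5.3} presupposes $zz'=zz'+\tth\cdot 0\ne0$. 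With that patch your case analysis does land exactly in (i)/(ii).

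A second, smaller flaw: the step ``letting $n\to\infty$ with $x$ fixed, so that $zz'+\tth n\to+\infty$'' presupposes $zz'\in\R$, which is not yet known --- at that stage $z,z'$ are arbitrary complex numbers, and positivity of a ratio of complex numbers only says that numerator and denominator are positive real multiples of one another. Repair it by comparing two sizes first: if $x$ is attainable at sizes $n_1\ne n_2$ (e.g.\ $x=1$, attainable at every $n\ge1$), then $zz'+\tth n_1$ and $zz'+\tth n_2$ are positive multiples of the same nonzero number $(z+x)(z'+x)$ and differ by the nonzero real number $\tth(n_2-n_1)$; hence both are real, so $zz'\in\R$ and $(z+x)(z'+x)\in\R$, and only now does letting $n\to\infty$ give $(z+x)(z'+x)>0$. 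Finally, a cosmetic omission in the ``if'' direction under (i): you verify the numerators but not the denominators; one needs $zz'=|z|^2>0$, i.e.\ $z\ne0$, which holds because $0\in\Z_{\le0}+\tth\cdot\Z_{\ge0}$ is excluded by (i). None of this changes your final conclusion, but these two steps need the indicated repairs before the argument is complete.
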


In particular, the simple sufficient condition of strict positivity is that $z$
and $z'$ should be nonreal and conjugate to each other.

\begin{proof} See \cite[Proposition 2.2]{BO5}.
\end{proof}

\begin{definition}\label{5.4}
We say that the couple $(z,z')$ belongs to the {\it principal series\/} or to
the {\it complementary series\/} if it satisfies {\rm(i)} or {\rm(ii)},
respectively. Of course, the complementary series exists for rational $\tth$
only.
\end{definition}

Let us summarize the results of this and preceding sections:

Let $\tth>0$ and let $(z,z')$ belong to the principal or complementary series.
For each $n=1,2,\dots$ all the numbers $M^{(n)}_{\tth,z,z'}(\la)$,
$\la\in\Y_n$, are strictly positive and sum up to\/ $1$, and hence they
determine a probability measure $M^{(n)}_{\tth,z,z'}$ on $\Y_n$. By the very
construction, these measures are consistent with the up transition
probabilities $p^\up_{\tth,z,z'}(\la,\nu)$ defined in \eqref{tag5.3}. They are
also consistent with the down transition probabilities $p^\down_\tth(\la,\mu)$
defined in \eqref{tag4.6}.

We call the measures $M^{(n)}_{\tth,z,z'}$ the {\it z-measures\/} with Jack
parameter $\tth$.  An explicit expression for the weights
$M^{(n)}_{\tth,z,z'}(\la)$ is given in \cite{BO5}, but in the present paper we
will not need it.

\section{The algebra $A_\tth$ of $\tth$-regular functions on Young
diagrams}\label{6}

In this section we fix an arbitrary $\tth>0$. For a set $\frak X$, we will
denote by $\Fun(\frak X)$ the algebra of all real-valued functions on $\frak
X$. Below $\la$ stands for an arbitrary Young diagram.

Let $u$ be a complex variable. Set
$$
\Phi(u;\la)=\prod_{i=1}^\infty\frac{u+\tth i}{u-\la_i+\tth i}
$$
and observe that the product is actually finite because only finitely many of
$\la_i$'s differ from $0$, so that only finitely many factors differ from $1$.
Clearly, for $\la$ fixed, $\Phi(u;\la)$ is a rational function in $u$ taking
value 1 at $u=\infty$. Therefore, $\Phi(u;\la)$ admits the Taylor expansion at
$u=\infty$ with respect to the variable $u^{-1}$. Likewise, such an expansion
also exists for $\log \Phi(u;\la)$.

\begin{definition}\label{6.1}
Let $\A_\tth\subset \Fun(\Y)$ be the unital subalgebra
generated by the coefficients of the Taylor expansion at $u=\infty$ of
$\Phi(u;\la)$ (or, equivalently, of $\log\Phi(u;\la)$). We call $\A_\tth$ the
{\it algebra of $\tth$-regular functions on $\Y$}.
\end{definition}

The Taylor expansion of $\log\Phi(u;\la)$ at $u=\infty$ has the form
\begin{equation}\label{tag6.1}
\log\Phi(u;\la)=\sum_{m=1}^\infty\frac{p^*_m(\la)}m\,u^{-m},
\end{equation}
where, by definition,
$$
p^*_m(\la)=\sum_{i=1}^\infty [(\la_i-\tth i)^m-(-\tth i)^m], \qquad
m=1,2,\dots, \quad \la\in\Y.
$$
The above expression makes sense because the sum is actually finite. Thus, the
algebra $\A_\tth$ is generated by the functions $p^*_1,p^*_2,\dots$. It is
readily verified that these functions are algebraically independent, so that
$\A_\tth$ is isomorphic to the algebra of polynomials in the variables
$p^*_1,p^*_2,\dots$.

\begin{definition}\label{6.2}
Using the isomorphism $\A_\tth\cong\R[p^*_1,p^*_2,\dots]$
we define a {\it filtration\/} in $\A_\tth$ by setting $\deg
p^*_m(\,\cdot\,)=m$. In more detail, the $m$th term of the filtration,
consisting of elements of degree $\le m$, is the finite-dimensional subspace
$\A_\tth^{(m)}\subset\A_\tth$ defined in the following way:
$$
\A_\tth^{(0)}=\R1; \quad
\A_\tth^{(m)}=\operatorname{span}\{(p^*_1)^{r_1}(p^*_2)^{r_2}\dots\,:\,
1r_1+2r_2+\dots\le m\},\quad m=1,2,\dots\,.
$$
\end{definition}

Note that $p^*_1(\la)=|\la|$. The $\tth$-regular functions on $\Y$ (that is,
elements of $\A_\tth$) coincide with the $\tth$-shifted symmetric polynomials
in the variables $\la_1,\la_2,\dots$ as defined in \cite{OO}, \cite{KOO}.

Next, we set
\begin{equation}\label{tag6.2}
\HH(u;\la)=\frac{u\prod\limits_{j=1}^{d-1}(u-y_j)}{\prod\limits_{i=1}^d(u-x_i)}\,,
\qquad \HE(u;\la)=\frac{-1}{\HH(u;\la)}
=-\,\frac{\prod\limits_{i=1}^d(u-x_i)}{u\prod\limits_{j=1}^{d-1}(u-y_j)}\,,
\end{equation}
where $x_1,\dots,x_d,y_1,\dots,y_{d-1}$ are the $\tth$-dependent Kerov
interlacing coordinates of $\la$ defined in \eqref{tag4.1}. Consider the Taylor
expansions of $\HH(u;\la)$ and $\HE(u;\la)$ at $u=\infty$:
\begin{equation}\label{tag6.3}
\HH(u;\la)=1+\sum_{m=1}^\infty\h_m(\la)u^{-m}, \qquad
\HE(u;\la)=-1+\sum_{m=1}^\infty\he_m(\la)u^{-m}.
\end{equation}
Because of \eqref{tag4.2} we have
$$
\h_1(\la)=\he_1(\la)\equiv0.
$$

Further, we set
\begin{equation}\label{tag6.4}
\p_m(\la)=\sum_{i=1}^d x_i^m-\sum_{j=1}^{d-1}y_j^m, \qquad m=1,2,\dots.
\end{equation}
Obviously,
\begin{equation}\label{tag6.5}
\log\HH(u;\la)=\sum_{m=1}^\infty \frac{\p_m(\la)}m\,u^{-m}, \qquad
\p_1(\la)\equiv0.
\end{equation}

\begin{proposition}\label{6.3} The following relation holds
\begin{equation}\label{tag6.6}
\HH(u;\la)=\frac{\Phi(u-\tth;\la)}{\Phi(u;\la)}\,.
\end{equation}
\end{proposition}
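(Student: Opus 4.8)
The plan is to verify \eqref{tag6.6} by computing the ratio $\Phi(u-\tth;\la)/\Phi(u;\la)$ directly from the product defining $\Phi$ and then recognizing the answer as the factored expression \eqref{tag6.2} for $\HH(u;\la)$. Write $\ell=\ell(\la)$ for the number of nonzero parts of $\la$. Since the $i$th factor of $\Phi(u;\la)$ equals $1$ exactly when $\la_i=0$, and the same holds for $\Phi(u-\tth;\la)$, the quotient is an honest finite product,
\[
\frac{\Phi(u-\tth;\la)}{\Phi(u;\la)}
=\prod_{i=1}^{\ell}\frac{(u+\tth(i-1))(u-\la_i+\tth i)}{(u-\la_i+\tth(i-1))(u+\tth i)}.
\]
First I would split this finite product into the part $\prod_{i=1}^{\ell}\frac{u+\tth(i-1)}{u+\tth i}$, which telescopes to $\frac{u}{u+\tth\ell}$, and the remaining part $\prod_{i=1}^{\ell}\frac{u-\la_i+\tth i}{u-\la_i+\tth(i-1)}$.

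Next I would simplify the remaining part by grouping the rows into maximal blocks of equal parts. Let the distinct positive values of $\la$ be $v_1>\dots>v_p>0$, with $v_q$ occupying the consecutive rows $a_q,\dots,b_q$. Inside each block the factors telescope, because the denominator $u-v_q+\tth(i-1)$ of the $i$th factor coincides with the numerator of the $(i-1)$st one; hence
\[
\prod_{i=1}^{\ell}\frac{u-\la_i+\tth i}{u-\la_i+\tth(i-1)}
=\prod_{q=1}^{p}\frac{u-v_q+\tth b_q}{u-v_q+\tth(a_q-1)}.
\]
Multiplying by $\frac{u}{u+\tth\ell}$ then yields a fully factored rational function whose numerator is $u\prod_{q}(u-v_q+\tth b_q)$ and whose denominator is $(u+\tth\ell)\prod_{q}(u-v_q+\tth(a_q-1))$.

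The final step, which I expect to be the only delicate point, is to match these surviving factors with the Kerov coordinates of \eqref{tag4.1}. The bottom of block $q$ is an outer corner at the lattice point $(b_q,v_q)$, with coordinate $y=v_q-\tth b_q$; this identifies the $p$ numerator factors with $\prod_{j=1}^{d-1}(u-y_j)$ and forces $p=d-1$. Likewise the top of block $q$, at $(a_q-1,v_q)$, together with the foot of the diagram at $(\ell,0)$, exhausts the inner corners, with coordinates $x=v_q-\tth(a_q-1)$ and $x=-\tth\ell$; this identifies the denominator with $\prod_{i=1}^{d}(u-x_i)$ and forces $d=p+1$. Substituting these identifications gives exactly $\HH(u;\la)=u\prod_{j}(u-y_j)/\prod_{i}(u-x_i)$, as required. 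Thus the main obstacle is not the algebra of the telescoping but the bookkeeping that translates the block structure of $\la$ into the corner coordinates \eqref{tag4.1}; once the correspondence ``block bottoms $\leftrightarrow$ outer corners, block tops and the diagram foot $\leftrightarrow$ inner corners'' is in place, the identity \eqref{tag6.6} drops out.
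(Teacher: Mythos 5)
Your proof is correct, but it takes a genuinely different route from the paper's. The paper proves \eqref{tag6.6} by induction on $|\la|$: the base case is the empty diagram, and the induction step compares how each side of \eqref{tag6.6} changes when a single box is appended, the change of the left-hand side being supplied by Lemma \ref{6.4} (which itself requires a case analysis of how the corner sets $(X;Y)$ are modified by adding a box). You instead compute $\Phi(u-\tth;\la)/\Phi(u;\la)$ in closed form: the quotient is a finite product, the ``column'' part telescopes to $u/(u+\tth\ell)$, the remaining part telescopes within each maximal block of equal rows, and the surviving factors $u-v_q+\tth b_q$, $u-v_q+\tth(a_q-1)$, $u+\tth\ell$ are exactly $u-y_j$ and $u-x_i$ for the outer and inner corners read off from the block structure --- a matching you verify correctly (block bottoms $(b_q,v_q)$ are the outer corners, block tops $(a_q-1,v_q)$ plus the foot $(\ell,0)$ are the inner corners, consistent with \eqref{tag4.1} and the example $\la=(3,3,1)$). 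What each approach buys: your argument is self-contained and makes transparent \emph{why} the identity holds, directly exhibiting the Kerov coordinates inside the telescoped product and avoiding any case analysis; the paper's induction is shorter \emph{given} Lemma \ref{6.4}, and that lemma is not wasted effort there, since the one-box transformation formulas \eqref{tag6.7} and \eqref{tag6.11} are needed again in Section \ref{7} (e.g.\ in \eqref{tag7.3}--\eqref{tag7.6}), so the paper gets the proposition nearly for free from machinery it must build anyway.
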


For $\tth=1$, another proof (due to Kerov) can be found in \cite[Proposition
3.6]{IO}  (note that the definition of $\Phi(u,\la)$ in \cite{IO} differs from
our definition by  a shift of the argument $u$).

\begin{proof} We proceed by induction on $n=|\la|$. For $n=0$ there exists only
one diagram, the empty one. The relation \eqref{tag6.6} is satisfied because
$\Phi(u,\varnothing)=\HH(u;\varnothing)\equiv1$.

Let us examine the transformation of the both sides of \eqref{tag6.6} when one
appends a box $\square=(i,j+1)$ to $\la$.

In terms of the row coordinates, this means that the coordinate $\la_i=j$ is
increased by 1. Consequently,
$$
\frac{\Phi(u;\la\cup\square)}{\Phi(u;\la)}=\frac{u-\la_i+\tth i}{u-\la_i-1+\tth
i}=\frac{u-j+\tth i}{u-j-1+\tth i}\,,
$$
which implies that the right-hand side of \eqref{tag6.6} is multiplied by
$$
\frac{(u-j-1+\tth i)(u-\tth-j+\tth i)}{(u-j+\tth i)(u-\tth-j-1+\tth i)}\,.
$$

On the other hand, recall that there is a natural bijective correspondence
between the boxes that may be appended to $\la=(X;Y)$ and the points in $X$.
Observe that the point $x$ corresponding to the box $\square=(i,j+1)$ is
$j-\tth(i-1)$. Therefore, the above expression can be rewritten as
$$
\frac{(u-x)(u-x+\tth-1)}{(u-x-1)(u-x+\tth)}\,.
$$

Now, the lemma below implies that the left-hand side of \eqref{tag6.6}
undergoes precisely the same transformation. This completes the induction step.
\end{proof}

\begin{lemma}\label{6.4} Let $\la=(X;Y)$ be a Young diagram and $\la\cup\square$ be
the diagram obtained from $\la$ by appending the box\/ $\square$ corresponding
to a point $x\in X$. Then
\begin{equation}\label{tag6.7}
\frac{\HH(u;\la\cup\square)}{\HH(u;\la)}=\frac{(u-x)(u-x+\tth-1)}{(u-x-1)(u-x+\tth)}\,.
\end{equation}
\end{lemma}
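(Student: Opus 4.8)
The plan is to deduce the lemma from a product formula that expresses $\HH(u;\la)$ as a product of elementary factors indexed by the boxes of $\la$. For a box occupying row $r+1$ and column $s+1$ (so $r,s\ge0$), its four lattice corners have $\tth$-contents $s-\tth r$, $(s+1)-\tth r$, $s-\tth(r+1)$, $(s+1)-\tth(r+1)$; writing $x=s-\tth r$ for the top-left corner, set
\[
w(u;\square)=\frac{(u-x)(u-x+\tth-1)}{(u-x-1)(u-x+\tth)},
\]
namely the product of the factors attached to the two corners on the main diagonal of the box, divided by those attached to the anti-diagonal corners. I claim that $\HH(u;\la)=\prod_{\square\in\la}w(u;\square)$. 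Granting this, the lemma is immediate: by the computation in the proof of Proposition \ref{6.3}, the box appended to $\la$ and corresponding to a point $x\in X$ is precisely the box whose top-left corner has $\tth$-content $x$, so the quotient $\HH(u;\la\cup\square)/\HH(u;\la)$ collapses to the single factor $w(u;\square)$, which is the right-hand side of \eqref{tag6.7}.

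To establish the product formula I would argue by cancellation rather than induction, since an inductive step would be the lemma itself and hence circular. Denote by $(\rho,\gamma)$ the box in row $\rho$ and column $\gamma$, and let $\chi_\la(\rho,\gamma)$ be $1$ if this box lies in $\la$ and $0$ otherwise, with the convention that a box with a vanishing index is absent. Expanding the product, each lattice point $P=(a,b)$ with $a,b\ge0$ and $\tth$-content $c=b-\tth a$ contributes $(u-c)^{e(P)}$, where
\[
e(P)=\chi_\la(a+1,b+1)+\chi_\la(a,b)-\chi_\la(a+1,b)-\chi_\la(a,b+1),
\]
the two boxes for which $P$ is the top-left or bottom-right corner counting $+1$ and the two boxes for which $P$ is the top-right or bottom-left corner counting $-1$. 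Thus $\prod_{\square\in\la}w(u;\square)=\prod_P(u-c)^{e(P)}$, and everything reduces to computing the exponents $e(P)$.

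A direct inspection of the four boxes surrounding $P$ shows that $e(P)=0$ away from the boundary of $\la$: at an interior or exterior point all four indicators are equal, and at a point on a straight horizontal or vertical segment of the border the two surviving indicators cancel. At an outer (removable) corner one gets $e(P)=+1$, at an inner (addable) corner $e(P)=-1$, and at the origin $e(P)=+1$ when $\la\ne\varnothing$. Comparing with the definition \eqref{tag6.2}, according to which $\HH(u;\la)$ has simple zeros at $0$ and at the outer-corner contents $y_j$ and simple poles at the inner-corner contents $x_i$, we see that both sides carry the same divisor, which proves the product formula and hence the lemma.

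The step requiring the most care is the bookkeeping along the boundary: the lattice points on the two coordinate axes, the origin (which must account for the explicit factor $u$ in $\HH$), and the extreme inner corners $(0,\la_1)$ and $(\ell,0)$, where $\ell$ is the number of rows of $\la$ and where some of the four surrounding boxes leave the positive quadrant. I would finally remark that the argument is insensitive to coincidences among the contents $c=b-\tth a$ that may occur for rational $\tth$, because both sides of the product formula are organized lattice point by lattice point before any such coincidences are imposed.
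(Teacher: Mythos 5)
Your proof is correct, but it follows a genuinely different route from the paper's. The paper proves Lemma \ref{6.4} locally: it records exactly how the Kerov coordinates change under adding the box, namely $\bar X=(X\setminus\{x\})\cup\{x+1,x-\tth\}$, $\bar Y=Y\cup\{x+1-\tth\}$ in the generic case, and then checks \eqref{tag6.7} directly from \eqref{tag6.2}, treating separately the three degenerate cases where $y'-x=1$ and/or $x-y''=\tth$ (when the new corners collide with old ones). You instead prove the global factorization $\HH(u;\la)=\prod_{\square\in\la}w(u;\square)$ by the lattice-point bookkeeping argument, and your exponent computation is right: with $e(P)=\chi_\la(a+1,b+1)+\chi_\la(a,b)-\chi_\la(a+1,b)-\chi_\la(a,b+1)$, the Young-diagram (left/up saturation) property forces $e(P)=0$ except at inner corners ($-1$), outer corners ($+1$), and the origin ($+1$ for $\la\ne\varnothing$, accounting for the explicit factor $u$ in \eqref{tag6.2}); the degenerate configurations that the paper must treat as separate cases are absorbed uniformly into this cancellation. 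Your approach buys more: the product formula is a statement of independent interest, it yields Lemma \ref{6.10} simultaneously (the removed box corresponding to $y\in Y$ has top-left corner of content $y-1+\tth$, and the reciprocal factor $1/w$ is exactly \eqref{tag6.11}), and it avoids all case analysis; the paper's proof is shorter if one wants only the single lemma. Two small points of hygiene: your appeal to the observation inside the proof of Proposition \ref{6.3} (that the addable box $(i,j+1)$ corresponds to the inner corner of content $j-\tth(i-1)$) is not circular, since that identification is definitional and independent of the lemma, but it would be cleaner to state it on its own; and the phrase ``both sides carry the same divisor'' should be read in the sense of your own closing remark --- the two sides agree as multisets of linear factors, lattice point by lattice point --- rather than as divisors of rational functions, since for rational $\tth$ (e.g.\ $\tth=1$, $\la=(2,1)$) a zero and a pole can share the same content and cancel, and a genuine divisor comparison would in any case only determine the function up to a scalar (fixed here by normalization at $u=\infty$).
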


\begin{proof} Let $y'$ and $y''$ be the neighboring points to $x$ in $Y$,
$y'>x>y''$. If $x$ is the greatest element of $X$ then $y'$ does not exist, and
if $x$ is the smallest element then $y''$ does not exist (these two extreme
cases occur when $\square$ lies in the first row or in the first column,
respectively). Observe that if $y'$ exists then the difference $y'-x$ is in
$\{1,2,3,\dots\}$, and if $y''$ exists then the difference $x-y''$ is in
$\{\tth,2\tth,3\tth\dots\}$.

Write $\la\cup\square=(\bar X;\bar Y)$. Consider first the generic case, when
$y'-x\ne1$ and $x-y''\ne\tth$. \footnote{If $y'$ does not exist then we
formally set $y'-x=+\infty\ne1$. Likewise, if $y''$ does not exist we formally
set $x-y''=+\infty\ne\tth$.} It is readily checked that
$$
\bar X=(X\setminus\{x\})\cup\{x+1,x-\tth\}, \qquad \bar Y=Y\cup\{x+1-\tth\}.
$$
Then \eqref{tag6.7} follows directly from the definition of $\HH(u;\la)$ in
\eqref{tag6.2}.

Let us examine now the remaining cases:

If $y'-x=1$ while $x-y''\ne\tth$ then
$$
\bar X=(X\setminus\{x\})\cup\{x-\tth\}, \qquad \bar
Y=(Y\setminus\{y'\})\cup\{x+1-\tth\}=(Y\setminus\{x+1\})\cup\{x+1-\tth\}.
$$

If $x-y''=\tth$ while $y'-x\ne1$ then
$$
\bar X=(X\setminus\{x\})\cup\{x+1\}, \qquad \bar
Y=(Y\setminus\{y''\})\cup\{x+1-\tth\}=(Y\setminus\{x-\tth\})\cup\{x+1-\tth\}.
$$

Finally, if both $y'-x=1$ and $x-y''=\tth$, then
$$
\bar X=X\setminus\{x\}, \qquad \bar Y=(Y\setminus\{y',y''\})\cup\{x+1-\tth\}
=(Y\setminus\{x+1,x-\tth\})\cup\{x+1-\tth\}.
$$

Again, in each of these three cases one readily checks that \eqref{tag6.7}
remains true. \end{proof}

\begin{proposition}\label{6.5} The functions\/ $\p_m(\la)$ defined in\/ \eqref{tag6.4}
belong to the algebra $\A_\tth$. More precisely, we have
\begin{equation}\label{tag6.8}
\p_m=\tth\cdot m\cdot p^*_{m-1}\,+\,\dots, \qquad m=2,3,\dots,
\end{equation}
where dots stand for lower degree terms, which are a linear combination of
elements $p^*_l$, with $1\le l\le m-2$.
\end{proposition}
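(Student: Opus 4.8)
The plan is to relate the power-sum type functions $\p_m(\la)$ directly to the $\tth$-shifted power sums $p^*_l(\la)$ by comparing the two generating series $\log\HH(u;\la)$ and $\log\Phi(u;\la)$, using the key identity \eqref{tag6.6} just established in Proposition \ref{6.3}. Recall from \eqref{tag6.5} that $\log\HH(u;\la)=\sum_{m\ge1}\p_m(\la)\,u^{-m}/m$, while from \eqref{tag6.1} we have $\log\Phi(u;\la)=\sum_{m\ge1}p^*_m(\la)\,u^{-m}/m$. By \eqref{tag6.6}, $\log\HH(u;\la)=\log\Phi(u-\tth;\la)-\log\Phi(u;\la)$, so the entire statement is a matching of coefficients in the Taylor expansion at $u=\infty$ of the right-hand side.

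First I would substitute the expansion of $\log\Phi$ into this difference and expand $\log\Phi(u-\tth;\la)$ as a series in $u^{-1}$. The main computation is the expansion
$$
(u-\tth)^{-l}=u^{-l}\Bigl(1-\tfrac{\tth}{u}\Bigr)^{-l}
=\sum_{k\ge0}\binom{l+k-1}{k}\tth^{\,k}u^{-l-k}.
$$
Plugging this into $\sum_l p^*_l(\la)\,(u-\tth)^{-l}/l$ and subtracting $\sum_l p^*_l(\la)\,u^{-l}/l$ produces a double sum in which the $k=0$ terms cancel, leaving only $k\ge1$ contributions. Collecting the coefficient of $u^{-m}$ then expresses $\p_m/m$ as an explicit finite linear combination of $p^*_l(\la)$ with $l\le m-1$; in particular $\p_m$ is a polynomial in the $p^*$'s and hence lies in $\A_\tth$.

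Next I would isolate the top-degree term. In the filtration of Definition \ref{6.2} one has $\deg p^*_l=l$, so the highest-degree contribution to $\p_m$ comes from the largest admissible index $l=m-1$, paired with $k=1$. The coefficient of $p^*_{m-1}$ works out to $\binom{m-1}{1}\tth=(m-1)\tth$ from the binomial factor, times the bookkeeping factors $m/(m-1)$ coming from the $1/l$ and $1/m$ normalizations; this should collapse to exactly $\tth\cdot m$, matching \eqref{tag6.8}. All remaining terms involve $p^*_l$ with $l\le m-2$, which are precisely the asserted lower-degree corrections. The bound $l\ge1$ and the vanishing of $\p_1$ are consistent because the $k=0$ cancellation kills the would-be $u^{-1}$ term.

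The only genuine obstacle is the bookkeeping in the binomial expansion: keeping track of the two normalizing denominators $1/l$ and $1/m$ together with the binomial coefficient $\binom{l+k-1}{k}$, and verifying that the leading coefficient simplifies cleanly to $\tth m$ rather than some unsimplified ratio. I would double-check this by testing the first nontrivial case $m=2$: there only $l=1,k=1$ survives, giving $\p_2$ proportional to $p^*_1=|\la|$, and the normalization must reproduce $\p_2=2\tth\,p^*_1$, which pins down the constant and confirms the general formula. Once the leading coefficient is verified in this base case, the general pattern follows from the explicit coefficient extraction with no further difficulty.
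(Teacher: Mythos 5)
Your proposal is correct and follows essentially the same route as the paper: combine the generating-series identities \eqref{tag6.1}, \eqref{tag6.5} with Proposition \ref{6.3}, expand $(u-\tth)^{-l}-u^{-l}$ binomially, and equate coefficients of $u^{-m}$, with the leading contribution $l=m-1$, $k=1$ yielding $\p_m=\tth m\,p^*_{m-1}+\dots$ exactly as in \eqref{tag6.8}. Your coefficient bookkeeping (including the check $\p_2=2\tth p^*_1$) is sound, so there is nothing to add.
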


\begin{proof} Combining \eqref{tag6.1}, \eqref{tag6.5}, and \eqref{tag6.6} we get
\begin{multline*}
 \sum_{m=1}^\infty
\frac{\p_m(\la)}{m}\,u^{-m}=\sum_{l=1}^\infty\frac{p^*_l(\la)}{l}
\left((u-\tth)^{-l}-u^{-l}\right)\\
=\sum_{l=1}^\infty\frac{p^*_l(\la)}{l}\,u^{-l} \left((1-\tth\cdot
u^{-1})^{-l}-1\right)\\ =\sum_{l=1}^\infty\frac{p^*_l(\la)}{l}\,u^{-l}
\left(\tth\cdot l\cdot u^{-1}-\tth^2\cdot\frac{l(l+1)}{2}\,u^{-2}+
\tth^3\cdot\frac{l(l+1)(l+2)}{2\cdot3}u^{-3}-\dots\right)\\
=\sum_{l=1}^\infty p^*_l(\la)\left(\tth\cdot
u^{-(l+1)}-\tth^2\cdot\frac{l+1}{2}\,u^{-(l+2)}+
\tth^3\cdot\frac{(l+1)(l+2)}{2\cdot3}u^{-(l+3)}-\dots\right).
\end{multline*}
Equating the coefficients we obtain the desired claim. More precisely:
$$
\frac{\p_m}m=\tth\cdot p^*_{m-1}-\tth^2\cdot\frac{l+1}2\,p^*_{m-2}
+\tth^3\cdot\frac{(l+1)(l+2)}{2\cdot3}\,p^*_{m-3}-\dots
$$

\end{proof}

\begin{corollary}\label{6.6} Each of the three families of functions
$\{\p_2,\p_3,\dots\}$, $\{\h_2,\h_3,\dots\}$, $\{\he_2,\he_3,\dots\}$ is a
system of algebraically independent generators of the algebra $\A_\tth$.
\end{corollary}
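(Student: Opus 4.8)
The plan is to reduce all three assertions to a single triangularity principle with respect to the filtration of Definition \ref{6.2}. Recall that $\A_\tth\cong\R[p^*_1,p^*_2,\dots]$ with the $p^*_m$ algebraically independent and $\deg p^*_m=m$. I will show that, after the re-indexing $m\mapsto m-1$, each of the three families is related to the generating system $\{p^*_1,p^*_2,\dots\}$ by an invertible triangular change of variables; concretely, that for every $m\ge2$ each of $\p_m$, $\h_m$, $\he_m$ has the form
\begin{equation*}
c_m\,p^*_{m-1}+r_m,\qquad c_m\ne0,\quad r_m\in\A_\tth^{(m-2)} .
\end{equation*}
Granting this, the corollary follows from a formal lemma about filtered polynomial algebras.

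First I would record the leading terms. For $\{\p_m\}$ there is nothing to do: formula \eqref{tag6.8} of Proposition \ref{6.5} already gives $\p_m=\tth m\,p^*_{m-1}+r_m$ with $c_m=\tth m\ne0$, so in particular $\deg\p_m=m-1$. For $\{\h_m\}$ I would exponentiate \eqref{tag6.5} and compare with \eqref{tag6.3}: this writes $\h_m$ as a universal polynomial in $\p_2,\dots,\p_m$ (only indices $\ge2$ occur because $\p_1\equiv0$), whose linear part is $\tfrac1m\p_m$ while every remaining monomial is a product $\p_{k_1}\cdots\p_{k_r}$ with $r\ge2$, all $k_i\ge2$ and $\sum_i k_i=m$. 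Since $\deg\p_k=k-1$, such a product has degree $\sum_i(k_i-1)=m-r\le m-2$, whereas the linear part has degree $m-1$. Hence the top-degree contribution is the single term $\tfrac1m\p_m=\tth\,p^*_{m-1}+(\deg\le m-2)$, giving $\h_m=\tth\,p^*_{m-1}+r_m$. For $\{\he_m\}$ I would use $\HE=-\HH^{-1}$, so that $-\HE$ is the exponential of $-\sum_{m\ge2}\frac{\p_m}{m}u^{-m}$; the identical computation with $\p_m$ replaced by $-\p_m$ yields $\he_m=\tth\,p^*_{m-1}+r_m$ as well.

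The main step is then the triangularity lemma: if $g_n:=f_{n+1}$ with $f_m=c_m p^*_{m-1}+r_m$ as above, then $\{g_n\}_{n\ge1}$ is a system of algebraically independent generators of $\A_\tth$. Generation is an induction on $n$: since $r_{n+1}\in\A_\tth^{(n-1)}\subseteq\R[p^*_1,\dots,p^*_{n-1}]$ lies in $\R[g_1,\dots,g_{n-1}]$ by the inductive hypothesis, one can solve $p^*_n=c_{n+1}^{-1}(g_n-r_{n+1})\in\R[g_1,\dots,g_n]$, whence $\R[g_1,g_2,\dots]=\A_\tth$. For algebraic independence I would pass to the associated graded algebra $\gr\A_\tth=\bigoplus_m\A_\tth^{(m)}/\A_\tth^{(m-1)}$, which by Definition \ref{6.2} is the polynomial algebra on the algebraically independent leading symbols $\bar p^*_n$. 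The leading symbol of $g_n$ is $c_{n+1}\bar p^*_n\ne0$, so the $\bar g_n$ are algebraically independent. If a nonzero polynomial $F$ satisfied $F(g_1,\dots,g_N)=0$, I would weight $g_n$ by $n$, take the top weighted-homogeneous part $F_{w_0}\ne0$, and project into $\A_\tth^{(w_0)}/\A_\tth^{(w_0-1)}$; by multiplicativity of leading symbols this yields $F_{w_0}(\bar g_1,\dots,\bar g_N)=0$, contradicting the independence of the $\bar g_n$. Applying the lemma to the three families $\p_{n+1}$, $\h_{n+1}$, $\he_{n+1}$ finishes the proof. The only point demanding genuine care is the degree bookkeeping in the second paragraph — in particular the shift $\deg\p_m=m-1$ (not $m$), which both forces the re-indexing and guarantees that all nonlinear terms fall strictly below the top degree, so that no cancellation can disturb the leading coefficient.
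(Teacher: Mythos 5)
Your proof is correct and follows exactly the route the paper intends: Corollary \ref{6.6} is stated there without a written proof, as an immediate consequence of the triangular relation $\p_m=\tth m\,p^*_{m-1}+(\text{lower degree})$ from Proposition \ref{6.5} together with the generating-series identities \eqref{tag6.3}, \eqref{tag6.5} relating $\h_m$ and $\he_m$ to the $\p_k$'s. You have simply made explicit the standard filtered-algebra triangularity argument (leading symbols in $\gr\A_\tth$) that the paper leaves to the reader, and your degree bookkeeping, including the shift $\deg\p_m=m-1$ and the matching leading coefficients of $\h_m$ and $\he_m$, agrees with Corollary \ref{6.7} and with the identity $\he_r=\h_r+(\text{lower degree})$ used later in Section \ref{7}.
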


\begin{corollary}\label{6.7} Under the identification of $\A_\tth$ with any of the
three algebras of polynomials
$$
\R[\p_2,\p_3,\dots], \qquad \R[\h_2,\h_3,\dots], \qquad \R[\he_2,\he_3,\dots]
$$
the filtration introduced in Definition \ref{6.2} is determined by setting
$$
\deg \p_m=m-1, \qquad \deg \h_m=m-1, \qquad \deg \he_m=m-1 \qquad
(m=2,3,\dots),
$$
respectively.
\end{corollary}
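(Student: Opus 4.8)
The plan is to reduce the claim for all three families to a single statement about leading terms with respect to the given filtration, and then to finish with a dimension count. First I would pin down the filtration degree of each generator. For the $\p_m$ this is immediate from Proposition \ref{6.5}: the expansion $\p_m=\tth\, m\, p^*_{m-1}+\dots$ places $\p_m$ in $\A_\tth^{(m-1)}$, while the nonvanishing of the top coefficient $\tth\, m$ (here $\tth>0$ and $m\ge2$) shows $\p_m\notin\A_\tth^{(m-2)}$, so $\p_m$ has filtration degree exactly $m-1$ with leading symbol a nonzero multiple of $p^*_{m-1}$. For $\h_m$ and $\he_m$ I would pass through the power sums via \eqref{tag6.5}: exponentiating gives $\HH=\exp\bigl(\sum_{l\ge2}\p_l\, l^{-1}u^{-l}\bigr)$, where the sum starts at $l=2$ because $\p_1\equiv0$. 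Expanding the exponential, $\h_m=\frac1m\p_m+\sum c_\mu\,\p_{l_1}\cdots\p_{l_j}$, the sum ranging over partitions $\mu=(l_1,\dots,l_j)$ of $m$ with $j\ge2$ and all parts $l_i\ge2$. Each such product has filtration degree at most $\sum_i(l_i-1)=m-j\le m-2$, so the only top-degree contribution is $\frac1m\p_m$, whose leading symbol is $\tth\, p^*_{m-1}$; hence $\h_m$ too has filtration degree exactly $m-1$. The identical computation applied to $\HE=-1/\HH=-\exp\bigl(-\sum_{l\ge2}\p_l\, l^{-1}u^{-l}\bigr)$ yields $\he_m=\frac1m\p_m+(\text{filtration degree}\le m-2)$ and the same conclusion.

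With this in hand, for each family $g\in\{\p,\h,\he\}$ let $\R[g]^{(k)}$ denote the span of the monomials $g_2^{r_2}g_3^{r_3}\cdots$ with $\sum_m(m-1)r_m\le k$. The filtration of $\A_\tth$ is multiplicative, $\A_\tth^{(a)}\cdot\A_\tth^{(b)}\subseteq\A_\tth^{(a+b)}$, directly from Definition \ref{6.2}, and each $g_m$ lies in $\A_\tth^{(m-1)}$ by the first step; therefore every such monomial lies in $\A_\tth^{(k)}$. This gives the inclusion $\R[g]^{(k)}\subseteq\A_\tth^{(k)}$ for every $k$.

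To upgrade this to equality I would compare dimensions. By Corollary \ref{6.6} the $g_m$ are algebraically independent, so the monomials counted in $\R[g]^{(k)}$ are linearly independent, and their number is $\#\{(r_m)_{m\ge2}:\sum_m(m-1)r_m\le k\}$. Reindexing by $l=m-1$ converts this into $\#\{(r_l)_{l\ge1}:\sum_l l\,r_l\le k\}$, which is precisely the number of monomials $(p^*_1)^{r_1}(p^*_2)^{r_2}\cdots$ of degree $\le k$, i.e.\ $\dim\A_\tth^{(k)}$. Hence $\dim\R[g]^{(k)}=\dim\A_\tth^{(k)}$, and the inclusion of the preceding paragraph forces $\R[g]^{(k)}=\A_\tth^{(k)}$ for all $k$. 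This is exactly the statement that the filtration on $\A_\tth$ is the one determined by $\deg g_m=m-1$.

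The only genuinely nontrivial point is the filtration-degree control in the first step for $\h_m$ and $\he_m$; it rests on the two observations that $\p_1\equiv0$ eliminates every partition having a part of size $1$, and that a product of $j\ge2$ factors $\p_{l_i}$ loses one unit of filtration degree per factor, so all correction terms fall strictly below degree $m-1$. Equivalently, in the associated graded algebra $\gr\A_\tth=\R[\overline{p^*_1},\overline{p^*_2},\dots]$ the leading symbols $\overline{g_m}$ are nonzero scalar multiples of the free generators $\overline{p^*_{m-1}}$, and the conclusion is the standard principle that such a family generates the filtered algebra with the induced filtration; the dimension count above makes this self-contained.
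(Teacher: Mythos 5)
Your proof is correct and follows the route the paper intends: Corollary \ref{6.7} is stated there without proof as an immediate consequence of Proposition \ref{6.5} and Corollary \ref{6.6}, and your argument supplies exactly the implicit details — the leading-term relations $\p_m=\tth m\,p^*_{m-1}+\cdots$, $\h_m=\frac1m\p_m+\cdots$, $\he_m=\frac1m\p_m+\cdots$ obtained from \eqref{tag6.5} and $\HE=-1/\HH$ (the first of these is recorded later as \eqref{tag8.5}), followed by the standard triangularity/dimension-count argument showing the two filtrations coincide.
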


Let $\La$ be the algebra of symmetric functions over the base field $\R$.
Following Macdonald \cite{Ma}, we will denote by $\{p_1,p_2,\dots\}$ and
$\{h_1,h_2,\dots\}$ the two systems of generators consisting of the Newton
power sums and the complete homogeneous symmetric functions.

\begin{definition}\label{6.8}
Define the {\it covering homomorphism\/} $\La\to\A_\tth$ by the specialization
\begin{equation}\label{tag6.9}
p_1\to0, \quad p_2\to\p_2, \quad  p_3\to\p_3, \quad \dots
\end{equation}
\end{definition}

By virtue of Corollary \ref{6.6}, the covering homomorphism is surjective and
its kernel is the principal ideal generated by $p_1$. Note also that under
\eqref{tag6.9} we have
\begin{equation}\label{tag6.10}
h_1\to0, \quad h_2\to\h_2, \quad  h_3\to\h_3, \quad \dots
\end{equation}

\begin{corollary}\label{6.9} If $F(\la)$ is a function from $\A_\tth$ then the
function $\la\mapsto F(\la')$ is in $\A_{\tth^{-1}}$.
\end{corollary}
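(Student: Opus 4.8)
The plan is to reduce the statement to a check on a convenient system of algebraic generators of $\A_\tth$. Write $\tau\colon\Y\to\Y$ for the transposition involution $\la\mapsto\la'$. Pullback along $\tau$, namely $F\mapsto F\circ\tau$ with $(F\circ\tau)(\la)=F(\la')$, is an algebra endomorphism of $\Fun(\Y)$, since $\tau$ is merely a bijection of the index set $\Y$. Hence, to prove that $F\circ\tau\in\A_{\tth^{-1}}$ for every $F\in\A_\tth$, it suffices to produce a set of algebra generators of $\A_\tth$ whose images under $\tau$-pullback lie in $\A_{\tth^{-1}}$. By Corollary \ref{6.6} (applied with parameter $\tth$), the functions $\p_2,\p_3,\dots$ built from the $\tth$-dependent Kerov coordinates form such a generating set, so everything reduces to understanding $\la\mapsto\p_m(\la')$.

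The key input is the transformation of Kerov coordinates under transposition recorded in Remark \ref{4.3}. Denote by $\p^{(\tth)}_m$ and $\p^{(\tth^{-1})}_m$ the functions \eqref{tag6.4} formed from the Kerov coordinates with parameters $\tth$ and $\tth^{-1}$, respectively. Applying Remark \ref{4.3} with the former parameter taken to be $\tth^{-1}$ and the former diagram taken to be $\la$, one gets that the $\tth$-Kerov coordinates $(X;Y)$ of $\la'$ arise from the $\tth^{-1}$-Kerov coordinates $(U;V)$ of $\la$ by the rescaling $(X;Y)=(-\tth\,U;-\tth\,V)$, up to reversing the enumeration. Since $\p_m$ is a symmetric combination, the reversal is irrelevant, and the rescaling factors out a scalar:
\begin{equation*}
\p^{(\tth)}_m(\la')=\sum_i(-\tth\,u_i)^m-\sum_j(-\tth\,v_j)^m=(-\tth)^m\,\p^{(\tth^{-1})}_m(\la).
\end{equation*}
As $\p^{(\tth^{-1})}_m\in\A_{\tth^{-1}}$ by Corollary \ref{6.6} (now with parameter $\tth^{-1}$), each function $\la\mapsto\p^{(\tth)}_m(\la')$ is a scalar multiple of a generator of $\A_{\tth^{-1}}$ and therefore lies in $\A_{\tth^{-1}}$. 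Together with the reduction of the first paragraph this yields $F\circ\tau\in\A_{\tth^{-1}}$ for all $F\in\A_\tth$.

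The only delicate point is the bookkeeping in invoking Remark \ref{4.3}: one must keep straight which parameter ($\tth$ versus $\tth^{-1}$) and which diagram ($\la$ versus $\la'$) each coordinate set belongs to, and observe that the reversed enumeration is harmless because only symmetric combinations of the coordinates enter $\p_m$. I anticipate no genuine analytic difficulty here. An equally short alternative bypasses the coordinatewise computation: from the definition \eqref{tag6.2} one checks directly that $\HH(u;\la')$ formed with parameter $\tth$ equals $\HH(-u/\tth;\la)$ formed with parameter $\tth^{-1}$, whence $\h_m(\la')=(-\tth)^m\h_m(\la)$ under the corresponding parameter switch, and one concludes via the $\h_m$ generators of Corollary \ref{6.6} in exactly the same fashion.
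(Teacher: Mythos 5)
Your proof is correct and takes essentially the same route as the paper: the paper's own one-line argument also introduces parameter-explicit notation $\p_{\tth,m}$, invokes Remark \ref{4.3} to obtain $\p_{\tth^{-1},m}(\la')=(-\tth)^{-m}\p_{\tth,m}(\la)$ (which is your identity after swapping $\tth\leftrightarrow\tth^{-1}$ and $\la\leftrightarrow\la'$), and concludes via the generating set of Corollary \ref{6.6}. Your write-up merely makes explicit the reduction-to-generators step and the bookkeeping that the paper leaves implicit.
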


\begin{proof} To make explicit the dependence on $\tth$, introduce the more
detailed notation $\p_{\tth,m}$ instead of $\p_m$. Using this notation and
Remark \ref{4.3}, we have
$$
\p_{\tth^{-1},m}(\la')=(-\tth)^{-m}\p_{\tth,m}(\la),
$$
which implies the claim.
\end{proof}

The next two lemmas will be used in Section \ref{7} below.

\begin{lemma}\label{6.10} Let $\la=(X;Y)$ be a Young diagram and
$\la\setminus\square$ be the diagram obtained from $\la$ by removing the box\/
$\square$ corresponding to a point $y\in Y$. Then
\begin{equation}\label{tag6.11}
\frac{\HH(u;\la\setminus\square)}{\HH(u;\la)}=\frac{(u-y+1)(u-y-\tth)}{(u-y)(u-y-\tth+1)}\,.
\end{equation}
\end{lemma}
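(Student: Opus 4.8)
The plan is to mirror the proof of Lemma~\ref{6.4}, which handled the dual operation of \emph{appending} a box corresponding to a point $x\in X$. Here we instead \emph{remove} the box $\square$ corresponding to a point $y\in Y$, and the natural strategy is to express $\la\setminus\square$ in terms of its own Kerov coordinates $(\bar X;\bar Y)$ and then compute the ratio $\HH(u;\la\setminus\square)/\HH(u;\la)$ directly from the defining product formula \eqref{tag6.2}. First I would locate the neighbors of $y$ in $X$: since the sequences interlace as $x_1>y_1>x_2>\dots$, the point $y$ sits strictly between two consecutive $x$'s, say $x'>y>x''$, with $x'-y\in\{1,2,\dots\}$ (a horizontal gap) and $y-x''\in\{\tth,2\tth,\dots\}$ (a vertical gap measured in units of $\tth$). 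These gap constraints are exactly the removal-direction analogues of the appending constraints in Lemma~\ref{6.4}.

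Next I would carry out the case analysis on whether the gaps are minimal. In the generic case $x'-y\neq1$ and $y-x''\neq\tth$, removing the box at $y$ replaces the single point $y$ in $Y$ by the two points $\{y-1,\,y+\tth\}$ while adding a single point to $X$; that is,
\begin{equation*}
\bar Y=(Y\setminus\{y\})\cup\{y-1,\,y+\tth\}, \qquad \bar X=X\cup\{y-1+\tth\}.
\end{equation*}
Feeding this into \eqref{tag6.2} makes almost every factor cancel, leaving precisely the ratio on the right of \eqref{tag6.11}. The three degenerate cases ($x'-y=1$ only, $y-x''=\tth$ only, or both) are handled exactly as in Lemma~\ref{6.4}: when a gap is minimal, the newly created point in $\bar Y$ or $\bar X$ coincides with an existing point and the two merge, so one checks separately that the surviving factors still collapse to the same rational fraction. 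In each subcase the verification is a short direct substitution.

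There is also a cleaner route that sidesteps the full case analysis: one can deduce \eqref{tag6.11} from the already-proved Lemma~\ref{6.4} by reading it backwards. If $\la\setminus\square$ is obtained from $\la$ by deleting the box at $y\in Y$, then symmetrically $\la$ is obtained from $\la\setminus\square$ by \emph{appending} a box, and that box corresponds to some point $\tilde x$ in the $X$-sequence of $\la\setminus\square$. Writing $\mu=\la\setminus\square$, Lemma~\ref{6.4} gives $\HH(u;\mu\cup\square)/\HH(u;\mu)=(u-\tilde x)(u-\tilde x+\tth-1)/[(u-\tilde x-1)(u-\tilde x+\tth)]$, so inverting yields $\HH(u;\mu)/\HH(u;\mu\cup\square)$ in terms of $\tilde x$. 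The only remaining task is the bookkeeping identity $\tilde x=y-1$, relating the appended-box coordinate in $\mu$ to the removed-box coordinate $y$ in $\la$; substituting $\tilde x=y-1$ then turns the inverted fraction into exactly \eqref{tag6.11}.

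I expect the main obstacle to be the geometric bookkeeping in the degenerate cases — correctly tracking which points of $X$ and $Y$ appear, disappear, or coincide when a minimal gap is present, so that no spurious factor survives the cancellation. This is the same delicate point as in Lemma~\ref{6.4}, and the interlacing and gap conditions ($x'-y\in\{1,2,\dots\}$, $y-x''\in\{\tth,2\tth,\dots\}$) are precisely what guarantee that the coincidences happen in the claimed pattern. Once the coordinate transformations are pinned down, the verification of \eqref{tag6.11} is purely mechanical.
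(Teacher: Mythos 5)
Your first route is exactly the paper's proof: in the generic case the paper records precisely the same transformation $\bar X=X\cup\{y+\tth-1\}$, $\bar Y=(Y\setminus\{y\})\cup\{y+\tth,\,y-1\}$, feeds it into the definition \eqref{tag6.2}, and disposes of the degenerate cases by the same merging analysis as in Lemma \ref{6.4}. However, you have the gap constraints backwards. Along the interlacing sequence $x_1>y_1>x_2>\dots$ the consecutive differences alternate as $x_j-y_j\in\{\tth,2\tth,\dots\}$ (a vertical segment of the boundary) and $y_j-x_{j+1}\in\{1,2,\dots\}$ (a horizontal segment); Lemma \ref{6.4} is the mirror situation, which is why there $y'-x$ is an integer and $x-y''$ a multiple of $\tth$. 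So for the removed box at $y$ with neighbors $x'>y>x''$ one has $x'-y\in\{\tth,2\tth,\dots\}$ and $y-x''\in\{1,2,\dots\}$, and the degenerate cases are $x'-y=\tth$ and/or $y-x''=1$ --- not $x'-y=1$ and $y-x''=\tth$ as you state. (The paper's generic hypothesis is exactly $x'>y+\tth$, $x''<y-1$.) Carried out literally, your case analysis would test conditions that for generic $\tth$ never occur, and would treat the genuinely degenerate configurations as generic, so this needs to be fixed before the verification goes through.

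Your ``cleaner route'' fails as written because the bookkeeping identity $\tilde x=y-1$ is false. If the removed box sits at the outer corner $(r,s)$ of $\la$, so that $y=s-\tth r$, then as an appendable box of $\mu=\la\setminus\square$ it corresponds to the inner corner $(r-1,s-1)$ of $\mu$, whose Kerov coordinate is $\tilde x=(s-1)-\tth(r-1)=y+\tth-1$. With this value, inverting Lemma \ref{6.4} gives
\begin{equation*}
\frac{\HH(u;\mu)}{\HH(u;\mu\cup\square)}
=\frac{(u-\tilde x-1)(u-\tilde x+\tth)}{(u-\tilde x)(u-\tilde x+\tth-1)}
=\frac{(u-y-\tth)(u-y+1)}{(u-y-\tth+1)(u-y)}\,,
\end{equation*}
which is \eqref{tag6.11}; whereas your value $\tilde x=y-1$ produces $\dfrac{(u-y)(u-y+\tth+1)}{(u-y+1)(u-y+\tth)}$, which is not. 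Once the coordinate is corrected, this deduction from Lemma \ref{6.4} is a legitimate and genuinely shorter alternative to the paper's direct computation, with the caveat that identifying the correct inner corner of $\mu$ is essentially the same geometric bookkeeping that the direct proof requires.
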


\begin{proof}
The argument is similar to that in the proof of Lemma \ref{6.4}. Write
$\la\setminus\square=(\bar X;\bar Y)$. Let $x'$ and $x''$ be the neighboring
points to $y$ in $X$, $x'>y>x''$. In the generic case, when $x'>y+\tth$ and
$x''<y-1$, we have
$$
\bar X=X\cup\{y+\tth-1\},\qquad \bar Y=(Y\setminus\{y\})\cup\{y+\tth,y-1\},
$$
and the claim follows from the definition of $\HH(u;\la)$. The remaining
possible cases are examined as in the proof of Lemma \ref{6.4}.
\end{proof}

\begin{lemma}\label{6.11} Let $\la$ be a Young diagram; $X=\{x_1,\dots,x_d\}$ and\/
$Y=\{y_1,\dots,y_{d-1}\}$ be its Kerov interlacing coordinates;
$\pi^\up_1,\dots,\pi^\up_d$ be the numbers associated to $(X;Y)$ according to
formulas \eqref{tag5.1} and \eqref{tag5.2}; $\pi^\down_1,\dots,\pi^\down_{d-1}$
be the numbers associated to $(X;Y)$ according to formulas \eqref{tag4.3} and
\eqref{tag4.4}.

Then for $m=0,1,2,\dots$
\begin{equation}\label{tag6.12}
\sum_{i=1}^d \pi^\up_i x_i^m=\h_m(\la), \qquad \sum_{j=1}^{d-1}
\pi^\down_jy_j^m=\he_{m+2}(\la).
\end{equation}
\end{lemma}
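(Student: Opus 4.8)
The plan is to read off both identities directly from the partial-fraction expansions \eqref{tag5.1} and \eqref{tag4.3}, treating their right-hand sides as generating functions in $u^{-1}$ and matching Taylor coefficients against the defining expansions \eqref{tag6.3}.

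First I would establish the identity for $\h_m$. By the definition \eqref{tag6.2}, $\HH(u;\la)$ equals $u$ times the left-hand side of \eqref{tag5.1}, so substituting \eqref{tag5.1} gives
$$
\HH(u;\la)=u\sum_{i=1}^d\frac{\pi^\up_i}{u-x_i}.
$$
Expanding each summand at $u=\infty$ as a geometric series, $\dfrac{u}{u-x_i}=\sum_{m=0}^\infty x_i^m u^{-m}$, and interchanging the two finite sums, I would collect the coefficient of $u^{-m}$ to obtain $\sum_{i=1}^d\pi^\up_i x_i^m$. Comparing with \eqref{tag6.3} yields $\sum_i\pi^\up_i x_i^m=\h_m(\la)$ for $m\ge1$; the constant term ($m=0$) reproduces $\sum_i\pi^\up_i=1$, which is exactly Proposition \ref{5.1}, so the case $m=0$ is consistent as well.

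For the identity involving $\he_{m+2}$ I would proceed analogously, now starting from $\HE(u;\la)=-u^{-1}$ times the left-hand side of \eqref{tag4.3}. Substituting \eqref{tag4.3} gives
$$
\HE(u;\la)=-\frac1u\Big(u-\sum_{j=1}^{d-1}\frac{\pi^\down_j}{u-y_j}\Big)
=-1+\frac1u\sum_{j=1}^{d-1}\frac{\pi^\down_j}{u-y_j}.
$$
The crucial bookkeeping point is the extra factor $u^{-1}$: expanding $\dfrac{1}{u(u-y_j)}=\sum_{k=0}^\infty y_j^k u^{-(k+2)}$ shifts every power down by two, so the coefficient of $u^{-(m+2)}$ in $\HE(u;\la)$ equals $\sum_{j}\pi^\down_j y_j^m$. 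Matching against \eqref{tag6.3} then gives $\sum_j\pi^\down_j y_j^m=\he_{m+2}(\la)$. This same shift also explains automatically why there is no $u^{-1}$ term, consistent with the vanishing $\he_1\equiv0$ recorded after \eqref{tag6.3}.

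There is no genuinely hard step here: the whole argument is routine generating-function manipulation, and the only thing requiring care is the index accounting — the single factor of $u$ in the definition of $\HH$ versus the factor $u^{-1}$ in $\HE$ is precisely what produces the asymmetry between $\h_m$ and $\he_{m+2}$ in the two formulas. I would therefore present the computation compactly, stressing the comparison of coefficients of $u^{-m}$ and the base-case check via Proposition \ref{5.1}.
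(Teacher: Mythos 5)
Your proposal is correct and follows essentially the same route as the paper's proof: substitute the partial-fraction expansions \eqref{tag5.1} and \eqref{tag4.3} into the definitions \eqref{tag6.2} of $\HH$ and $\HE$, expand each term as a geometric series in $u^{-1}$, and equate coefficients against \eqref{tag6.3}, with the factor $u$ (resp.\ $u^{-1}$) accounting for the index shift between $\h_m$ and $\he_{m+2}$. The only cosmetic difference is that you flag the $m=0$ case via Proposition \ref{5.1}, whereas the coefficient comparison already yields it directly (since $\h_0:=1$ and the constant term of $\HH$ is $1$).
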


\begin{proof} Comparing the definition of $\HH(u;\la)$ (see \eqref{tag6.2}) with
\eqref{tag5.1}, we get
$$
\HH(u;\la)=u\sum_{i=1}^d\frac{\pi_i^\up}{u-x_i}
=\sum_{i=1}^d\pi_i^\up\left(1+\frac{x_i}{u}+\frac{x_i^2}{u^2}+\dots\right).
$$
Equating the coefficients in $u^{-m}$ gives the first equality in
\eqref{tag6.12}.

Likewise, from the definition of $\HE(u;,\la)$ (see \eqref{tag6.2}) and
\eqref{tag4.3} it follows
$$
\HE(u;\la)=-1+\frac1{u}\sum_{j=1}^{d-1}\frac{\pi_j^\down}{u-y_j}
=-1+\sum_{j=1}^{d-1}\pi_j^\down\left(\frac1{u^2}+\frac{y_j}{u^3}+\frac{y_j^2}{u^4}+\dots\right),
$$
which implies the second equality in \eqref{tag6.12}.
\end{proof}

Finally, consider one more set of generators in $\La$, the elementary symmetric
functions $e_1,e_2,\dots$.

\begin{lemma}\label{6.12} Let $\e_1,\e_2,\dots$ denote the images of
$e_1,e_2,\dots$ under the covering homomorphism\/ $\La\to\A_\tth$, see\/
\eqref{tag6.9} and\/ \eqref{tag6.10}.

We have\/ $\he_1=0$ and\/ $\he_m=(-1)^{m-1}\e_m$ for $m=2,3,\dots$\,.
\end{lemma}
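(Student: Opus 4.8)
The plan is to derive the identity directly from the defining relation $\HH(u;\la)\,\HE(u;\la)=-1$ in \eqref{tag6.2}, combined with the classical duality between the complete and the elementary symmetric functions in $\La$, transported to $\A_\tth$ along the covering homomorphism. First I would pass to generating series in the variable $t=u^{-1}$. By \eqref{tag6.3} we may write $\HH(u;\la)=\sum_{m\ge0}\h_m(\la)\,t^m$ with $\h_0=1$, and $\HE(u;\la)=\sum_{m\ge0}\he_m(\la)\,t^m$ with $\he_0=-1$, the latter constant term being exactly the $-1$ recorded in \eqref{tag6.3}. Then the relation $\HH\cdot\HE=-1$ reads $\bigl(\sum_{m\ge0}\h_m t^m\bigr)\bigl(\sum_{m\ge0}\he_m t^m\bigr)=-1$ in the formal power series ring $\A_\tth[[t]]$.

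Next I would recall the classical identity $H(t)E(-t)=1$ in $\La[[t]]$, where $H(t)=\sum_m h_m t^m$ and $E(t)=\sum_m e_m t^m$; equivalently, $\sum_{k=0}^m(-1)^k e_k h_{m-k}=0$ for every $m\ge1$. These are genuine finite polynomial relations among the generators of $\La$, so the covering homomorphism $\La\to\A_\tth$, being a ring homomorphism, carries them over verbatim. By \eqref{tag6.10} it sends $h_m\mapsto\h_m$, and by the very definition of the $\e_m$ it sends $e_m\mapsto\e_m$; applying it coefficientwise to $H(t)E(-t)=1$ therefore yields $\bigl(\sum_{m\ge0}\h_m t^m\bigr)\bigl(\sum_{m\ge0}\e_m(-t)^m\bigr)=1$ in $\A_\tth[[t]]$.

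Finally, I would compare the two displayed identities. The series $\sum_{m\ge0}\h_m t^m$ has constant term $\h_0=1$, hence is invertible in $\A_\tth[[t]]$; cancelling it from $\bigl(\sum_m\h_m t^m\bigr)\bigl(\sum_m\e_m(-t)^m\bigr)=1=-\bigl(\sum_m\h_m t^m\bigr)\bigl(\sum_m\he_m t^m\bigr)$ gives $\sum_m\e_m(-t)^m=-\sum_m\he_m t^m$. Equating coefficients of $t^m$ produces $(-1)^m\e_m=-\he_m$, i.e. $\he_m=(-1)^{m-1}\e_m$ for all $m\ge0$. For $m=1$ this reads $\he_1=\e_1$, and since $e_1=p_1$ maps to $0$ by \eqref{tag6.9} we get $\e_1=0$, recovering $\he_1=0$; for $m\ge2$ it is exactly the claimed formula. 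I do not expect any real obstacle here: the argument is essentially the classical $h$--$e$ duality pulled back along a ring homomorphism. The only points requiring care are the two nonstandard constant terms ($\h_0=1$ but $\he_0=-1$, the source of the global sign) and the alternating signs carried by $E(-t)$; the single conceptual remark worth making explicit is that a ring homomorphism automatically preserves the universal Newton-type relations $\sum_k(-1)^k e_k h_{m-k}=0$, which is precisely what legitimizes transferring the duality from $\La$ to $\A_\tth$.
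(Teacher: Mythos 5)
Your proof is correct and is essentially the paper's own argument: both rest on transporting the classical duality $H(t)E(-t)=1$ along the covering homomorphism and comparing with the definition $\HE(u;\,\cdot\,)=-1/\HH(u;\,\cdot\,)$ from \eqref{tag6.2}, the only difference being that you multiply and cancel the invertible series $\sum_m\h_m t^m$ where the paper phrases the same computation via $E(u)=1/H(-u)$ and the identity $\HE(u;\,\cdot\,)=-\bold E(-u;\,\cdot\,)$. The sign bookkeeping and the derivation of $\he_1=\e_1=0$ are all in order.
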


\begin{proof} Consider the generating series
$$
H(u)=1+\sum_{m=1}^\infty h_m u^{-m}, \qquad E(u)=1+\sum_{m=1}^\infty e_m
u^{-m}.
$$
By the very definition, the covering homomorphism send $H(u)$ to
$\HH(u;\,\cdot\,)$. On the other hand, $E(u)=1/H(-u)$, so that the covering
homomorphism send $E(u)$ to $\bold E(u;\,\cdot\,):=1/\HH(-u;\,\cdot\,)$.
Comparing this with the definition $\HE(u;\,\cdot\,)=-1/\HH(u;\,\cdot\,)$ (see
\eqref{tag6.2}) we conclude that $\HE(u;\,\cdot\,)=-\bold E(-u;\,\cdot\,)$.
This implies the claim.
\end{proof}

\section{The operators $D$ and $U$ in the algebra $\A_\tth$}\label{7}

In this section we fix a triple of parameters $(\tth,z,z')$. We assume
$\tth>0$. The parameters $z$ and $z'$ may be arbitrary complex numbers such
that right-hand side of formula \eqref{tag5.3} makes sense, so that the numbers
$p^\up_{\tth,z,z'}(\la,\nu)$ are well defined. Since we will be dealing with
formal computations we will not need to require these numbers to be positive.
Thus, in this section, the only restriction on $(z,z')$ is that $zz'+\tth
n\ne0$ for all $n=0,1,2,\dots$\,.

\subsection{Statement of the result}\label{7-1}
Let $D_{n+1,n}: \Fun(\Y_n)\to \Fun(\Y_{n+1})$ and $U_{n,n+1}: \Fun(\Y_{n+1})\to
\Fun(\Y_n)$ be the ``down'' and ``up'' operators acting on functions:
\begin{gather*}
 (D_{n+1,n}F)(\nu)=\sum_{\la\in\Y_n}p^\down_\tth(\nu,\la)F(\la),
\qquad F\in \Fun(\Y_n), \quad
\nu\in\Y_{n+1}\,,\\
(U_{n,n+1}G)(\la)=\sum_{\nu\in\Y_{n+1}}p_{\tth,z,z'}^\up(\la,\nu)G(\nu), \qquad
G\in \Fun(\Y_{n+1}), \quad\la\in\Y_n\,.
\end{gather*}
This action arises by dualizing the natural action of $p^\down_\tth$ and
$p_{\tth,z,z'}^\up$ on measures, which explains the seeming contradiction: the
``down'' operator raises the level $n$ while the ``up'' operator reduces the
level.

In the formulation of Theorem \ref{7.1} below we identify $\A_\tth$ with the
polynomial algebra $\R[\h_2,\h_3,\dots]$. Recall that $\A_\tth$ is a filtered
algebra (Definition \ref{6.2}) and that under the identification
$\A_\tth=\R[\h_2,\h_3,\dots]$ the filtration is determined by setting $\deg
\h_m=m-1$ (Corollary \ref{6.7}. We say that an operator $\mathcal A:
\A_\tth\to\A_\tth$ has degree $\le m$, where $m\in\Z$, if for any
$F\in\A_\tth$, $\deg(\mathcal A F)\le \deg F+m$.

Observe that any operator in the algebra of polynomials (in finitely or
countably many variables) can be written as a differential operator with
polynomial coefficients, that is, as a formal infinite sum of differential
monomials. This fact is well known and can be readily proved; we do not use it
but it is helpful to take it in mind while reading the formulation and the
proof of Theorem \ref{7.1}.

Given $F\in\A_\tth$, we denote by $F_n$ the restriction of the function
$F(\,\cdot\,)$ to $\Y_n\subset\Y$. It is readily checked that the subalgebra
$\A_\tth\subset\Fun(\Y)$ separates points, which implies that for each $n$, the
functions of the form $F_n$, with $F\in\A_\tth$, exhaust the space
$\Fun(\Y_n)$.

\begin{theorem}\label{7.1}
{\rm(i)} There exists a unique operator $D:\A_\tth\to\A_\tth$  such that
$$
D_{n+1,n}F_n=\frac1{\tth(n+1)}(DF)_{n+1}\,, \qquad\text{for all $n=0,1,\dots$
and all $F\in\A_\tth$}.
$$
More precisely, the operator $D$ has degree $1$ with respect to the filtration
of $\A_\tth$, and its top degree terms look as follows
\begin{gather*}
 D=\h_2+\frac12\tth^2\sum_{r,s\ge2}(r-1)(s-1)
\h_{r+s-2}\frac{\pd^2}{\pd \h_r\pd\h_s}\\
-\tth\sum_{r\ge2}(r-1)\h_r\frac{\pd}{\pd\h_r}\\
+\frac12\tth(1-\tth)\sum_{r\ge3}(r-1)(r-2)\h_{r-1}\frac{\pd}{\pd\h_r}\\
+\frac12\tth\sum_{r,s\ge2}(r+s)\h_r\h_s\frac{\pd}{\pd\h_{r+s}}\\
+\text{\rm terms of degree $\le-2$}.
\end{gather*}

{\rm(ii)} There exists a unique operator $U:\A_\tth\to\A_\tth$  such that
$$
U_{n,n+1}F_{n+1}=\frac1{zz'+\tth n}(UF)_n\,, \qquad\text{for all $n=0,1,\dots$
and all $F\in\A_\tth$}.
$$
More precisely, the operator $U$ has degree $1$ with respect to the filtration
of $\A_\tth$, and its top degree terms look as follows
\begin{gather*}
U=\h_2+zz'+\tth zz'\frac{\pd}{\pd\h_2}\\
+\tth(z+z')\sum_{r\ge3}(r-1)\h_{r-1}\frac{\pd}{\pd\h_r}\\
+\frac12\tth^2\sum_{r,s\ge2}(r-1)(s-1)
\h_{r+s-2}\frac{\pd^2}{\pd \h_r\pd\h_s}\\
+\tth\sum_{r\ge2}(r-1)\h_r\frac{\pd}{\pd\h_r}\\
+\frac12\tth(1-\tth)\sum_{r\ge3}(r-1)(r-2)\h_{r-1}\frac{\pd}{\pd\h_r}\\
+\frac12\tth\sum_{r,s\ge2}(r+s-2)\h_r\h_s\frac{\pd}{\pd\h_{r+s}}\\
+\text{\rm terms of degree $\le-2$}.
\end{gather*}
\end{theorem}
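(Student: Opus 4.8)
The plan is to first strip off the level-dependent normalizations, recognize $D$ and $U$ as weighted box-removal and box-addition operators, and then compute everything through the generating series $\HH(u;\cdot)$. For part (i), Proposition \ref{4.2} gives $p^\down_\tth(\nu,\nu\setminus\square_j)=\pi^\down_j/\Area(X;Y)$, and by \eqref{tag4.5} one has $\Area(X;Y)=\tth|\nu|=\tth(n+1)$ for $\nu\in\Y_{n+1}$. Hence the prescribed relation forces $(DF)(\nu)=\sum_j\pi^\down_j(\nu)\,F(\nu\setminus\square_j)$, the sum running over removable boxes of $\nu$; this formula is manifestly independent of $n$, which will yield uniqueness once I check $D(\A_\tth)\subset\A_\tth$. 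Likewise, from \eqref{tag5.3} and Proposition \ref{5.1} the up operator must be $(UG)(\la)=\sum_i(z+x_i)(z'+x_i)\,\pi^\up_i(\la)\,G(\la\cup\square_i)$.

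The key device is to apply $D$ and $U$ to products of generating series and pull out the ratio factors supplied by Lemmas \ref{6.4} and \ref{6.10}. Writing $R(u,y)=\frac{(u-y+1)(u-y-\tth)}{(u-y)(u-y-\tth+1)}$ and $\wt R(u,x)=\frac{(u-x)(u-x+\tth-1)}{(u-x-1)(u-x+\tth)}$, Lemma \ref{6.10} gives
\[
\big(D{\textstyle\prod_k}\HH(u_k;\cdot)\big)(\nu)=\prod_k\HH(u_k;\nu)\,\sum_j\pi^\down_j(\nu)\prod_k R(u_k,y_j),
\]
with the analogous formula for $U$ obtained from Lemma \ref{6.4}, replacing $R$ by $\wt R$ and inserting the weight $(z+x_i)(z'+x_i)$. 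Expanding each factor $R(u,y)=1+\sum_{m\ge2}c_m(y)u^{-m}$ in powers of $u^{-1}$ (so each $c_m$ is a polynomial in $y$) and invoking Lemma \ref{6.11}, every resulting coefficient is a polynomial in the weighted power sums $\sum_j\pi^\down_j y_j^{\,l}=\he_{l+2}$ (for $D$) and $\sum_i\pi^\up_i x_i^{\,l}=\h_l$ (for $U$), together with the normalizations $\sum_j\pi^\down_j=\tth|\nu|=\h_2$ and the three relations of Proposition \ref{5.1}. This simultaneously proves $D(\A_\tth)\subset\A_\tth$ and $U(\A_\tth)\subset\A_\tth$ uniformly in $n$, establishing existence and uniqueness.

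It then remains to read off the differential operator. Since any operator on $\R[\h_2,\h_3,\dots]$ is a differential operator with $\A_\tth$-coefficients and $\pd\HH(u;\cdot)/\pd\h_m=u^{-m}$, the coefficients are recovered by matching generating series. The one-variable expansion $D\HH(u;\cdot)=\HH(u;\cdot)\,[\h_2+B(u)]$ with $B(u)=\sum_j\pi^\down_j(R(u,y_j)-1)$ supplies the multiplication part $\h_2$ (from $D1=\tth|\nu|=\h_2$) and all first-order coefficients, while the genuinely second-order coefficients come only from the two-variable cross term $\sum_j\pi^\down_j(R(u,y_j)-1)(R(v,y_j)-1)$; here I would use $\he_m=\h_m+(\text{lower degree})$, which follows from $\HE=-1/\HH$, to pass to the $\h$-basis. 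The shape of the answer is already explained by the low-order expansions
\[
R(u,y)-1=-\tth w^{-2}+\tth(1-\tth)w^{-3}-\dots,\qquad \wt R(u,x)-1=\tth w^{-2}+\tth(1-\tth)w^{-3}+\dots,
\]
with $w=u-y$ respectively $w=u-x$: the coinciding $w^{-3}$-coefficients produce the common term $\tfrac12\tth(1-\tth)\sum(r-1)(r-2)\h_{r-1}\,\pd/\pd\h_r$, the opposite $w^{-2}$-coefficients produce $\mp\tth\sum(r-1)\h_r\,\pd/\pd\h_r$, and the product of the two $w^{-2}$-terms (equal to $\tth^2$ in both cases) produces the common second-order term; for $U$ the factor $(z+x_i)(z'+x_i)=x_i^2+(z+z')x_i+zz'$ contributes in addition the terms $zz'$, $\tth zz'\,\pd/\pd\h_2$ and $\tth(z+z')\sum(r-1)\h_{r-1}\,\pd/\pd\h_r$.

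I expect the main obstacle to be the degree bookkeeping rather than any conceptual point. One must expand $R$ and $\wt R$, and their two-variable products, to exactly the order that feeds the degree-$(-1)$ part of the operator, verify that every contribution of degree $\ge-1$ is captured, and confirm that the corrections hidden in $\he_m=\h_m+\dots$ and in the subleading parts of the weighted power sums all fall into degree $\le-2$. Tracking which of the several distinct sources — the leading and subleading parts of $R-1$, the diagonal versus cross contributions, and the three pieces of the $U$-weight — lands in each filtration degree is where the care is required.
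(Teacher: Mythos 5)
Your proposal is correct and follows essentially the same route as the paper's Section \ref{7}: identifying the unnormalized operators $(DF)(\nu)=\sum_j\pi^\down_j F(\nu\setminus\square_j)$ and $(UF)(\la)=\sum_i(z+x_i)(z'+x_i)\pi^\up_i F(\la\cup\square_i)$ from Proposition \ref{4.2} and formula \eqref{tag5.3}, computing their action on products of the series $\HH(u_l;\cdot)$ via Lemmas \ref{6.4} and \ref{6.10}, proving $\A_\tth$-membership via Lemma \ref{6.11}, and reading off the top-degree terms from the expansions of the ratio factors. Your direct matching of $0$-, $1$-, and $2$-variable generating-series coefficients against the constant, first-order, and second-order parts of a differential operator is a repackaging of the paper's expansion in the basis $\{\h_\rho\}$ through monomial symmetric functions (Lemma \ref{7.4}) combined with the degree estimates of Lemmas \ref{7.5} and \ref{7.10}, and the bookkeeping you flag at the end (the conversion $\he_m=\h_m+\cdots$ and the negligibility of all remaining contributions) is exactly what the paper carries out in Lemmas \ref{7.7}--\ref{7.9} and \ref{7.12}--\ref{7.16}.
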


Note that $D$ depends only on $\tth$ while $U$ depends on the whole triple
$(\tth,z,z')$.

The rest of the section is devoted to the proof. Since it is long, let us
briefly describe its idea. Instead of dealing with individual elements of
$\A_\tth$ it is more convenient to manipulate with generating series. We know
that the series $\HH(u;\la)$ gathers the generators $\h_2(\la),
\h_3(\la),\dots$ of the algebra $\A_\tth$. Therefore, the products
$\HH(u_1;\la)\HH(u_2;\la)\dots$ gather various products of the generators,
which in turn constitute a linear basis in $\A_\tth$. Thus, we know the action
of our operators if we know how they transform products of generating series.
Now, it turns out that the transformation of $\HH(u_1;\la)\HH(u_2;\la)\dots$
can be written down in a closed form. {}From this we can extract all the
necessary information.

\subsection{Action of $D$ and $U$ on products of generating series}\label{7-2}
We proceed to the detailed proof. Recall that
\begin{equation}\label{tag7.1}
H(u)=1+\sum_{m=0}^\infty h_m u^{-m}\,\in\,\La[[u^{-1}]].
\end{equation}
Let $\rho=(\rho_1,\rho_2,\dots)$ range over the set of partitions. Recall the
standard notation $h_\rho$ and $m_\rho$ for the complete homogeneous symmetric
functions and monomial symmetric functions, respectively, see \cite{Ma}. Take a
finite collection of variables $u_1,u_2,\dots$ (we prefer to not indicate their
number explicitly). Then
$$
\prod_l H(u_l)=\sum_{\rho}m_\rho(u_1^{-1},u_2^{-1},\dots)h_\rho
$$
summed over all $\rho$'s such that $\ell(\rho)$ (the number of nonzero parts in
$\rho$) does not exceed the number of variables $u_1,u_2,\dots$\,.

Applying to \eqref{tag7.1} the covering  homomorphism $\La\to\A_\tth$
(Definition \ref{6.8}) and using \eqref{tag6.10} we get
\begin{equation}\label{tag7.2}
\prod_l \HH(u_l;\la)=\sum_{\rho}m_\rho(u_1^{-1},u_2^{-1},\dots)\h_\rho(\la).
\end{equation}
Because $\h_1(\la)\equiv0$, we may and do additionally assume that $\rho$ does
not contain parts equal to 1. Note that the set $\{\h_\rho:
\rho_1,\rho_2,\ldots\ne1\}$ is a basis in $\A_\tth$.

We regard the left-hand side of \eqref{tag7.2} as a generating series for the
basis elements $\h_\rho$. Thus, the transformation of the left-hand side under
the action of an operator acting on functions in $\la$ is completely determined
by its action on the functions $\h_\rho(\la)$ in the right-hand side.

Occasionally, it will be convenient to omit the argument $\la$ in
the notation $\HH(u;\la)$. Recall also the notation $(\dots)_n$ for
the operation of restriction to the subset $\Y_n\subset\Y$.

By the very definition of $U_{n,n+1}$ we have
\begin{multline*}
\left(U_{n,n+1}\left(\prod_l\HH(u_l)\right)_{n+1}\right)(\la)\\=\sum_{i=1}^d
p^\up_{\tth,z,z'}(\la,\la\cup\square_i) \prod_l\HH(u_l,\la\cup\square_i),
\qquad \la\in\Y_n
\end{multline*}
Substituting the explicit expression \eqref{tag5.3} for the up probabilities
and using Lemma \ref{6.4} we rewrite this equality as
\begin{multline}\label{tag7.3}
\left((zz'+\tth
n)U_{n,n+1}\left(\prod_l\HH(u_l)\right)_{n+1}\right)(\la)\\=\left\{\sum_{i=1}^d
(z+x_i)(z'+x_i)\prod_l\frac{(u_l-x_i)(u_l-x_i+\tth-1)}{(u_l-x_i-1)(u_l-x_i+\tth)}
\cdot\pi^\up_i(X;Y)\right\}\\
\times \prod_l\HH(u_l;\la), \qquad \la\in\Y_n.
\end{multline}
Here, as usual,  $(X;Y)$ are the Kerov interlacing coordinates of $\la$ and
$\pi^\up_i(X;Y)$ denote the numbers $\pi^\up_i$ defined in \eqref{tag5.2}.

Likewise, by the definition of $D_{n+1,n}$,
\begin{multline*}
\left(D_{n+1,n}\left(\prod_l\HH(u_l)\right)_n\right)(\la)\\
=\sum_{j=1}^{d-1} p^\down(\la,\la\setminus\square_j)
\prod_l\HH(u_l,\la\setminus\square_j), \qquad \la\in\Y_{n+1}
\end{multline*}
Substituting the explicit expression for the down probabilities (see
\eqref{tag4.6} and \eqref{tag4.5}) and using Lemma \ref{6.10} we rewrite this
as
\begin{multline}\label{tag7.4}
\left(\tth(n+1)D_{n+1,n}\left(\prod_l\HH(u_l)\right)_n\right)(\la)
\\=\left\{\sum_{j=1}^{d-1}
\prod_l\frac{(u_l-y_j+1)(u_l-y_j-\tth)}{(u_l-y_j)(u_l-y_j-\tth+1)}
\cdot\pi^\down_j(X;Y)\right\}\\
\times \prod_l\HH(u_l;\la), \qquad \la\in\Y_{n+1}.
\end{multline}
Here $\pi^\down_j(X;Y)$ are the numbers $\pi^\down_j$ defined in
\eqref{tag4.4}.

It is convenient to introduce a special notation for the expressions in the
curly brackets that appear in \eqref{tag7.3} and \eqref{tag7.4}:
\begin{gather} F^\up(u_1,u_2,\dots;\la)=\sum_{i=1}^d
(z+x_i)(z'+x_i)\prod_l\frac{(u_l-x_i)(u_l-x_i+\tth-1)}{(u_l-x_i-1)(u_l-x_i+\tth)}
\cdot\pi^\up_i(X;Y)  \label{tag7.5}\\
F^\down(u_1,u_2,\dots;\la)= \sum_{j=1}^{d-1}
\prod_l\frac{(u_l-y_j+1)(u_l-y_j-\tth)}{(u_l-y_j)(u_l-y_j-\tth+1)}
\cdot\pi^\down_j(X;Y)  \label{tag7.6}
\end{gather}

\begin{lemma}\label{7.2} As functions in $\la$, both $F^\up(u_1,u_2,\dots;\la)$ and
$F^\down(u_1,u_2,\dots;\la)$ are elements of the algebra $\A_\tth$. More
precisely, the both expressions can be viewed as elements of
$\A_\tth[[u_1^{-1},u_2^{-1},\dots]]$.
\end{lemma}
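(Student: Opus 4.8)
The plan is to expand each product appearing in \eqref{tag7.5} and \eqref{tag7.6} as a formal power series in the $u_l^{-1}$, and then to recognize every coefficient, after summation against the weights $\pi^\up_i$ or $\pi^\down_j$, as an explicit element of $\A_\tth$ furnished by Lemma \ref{6.11}. I would treat $F^\down$ first, as it is slightly simpler. The computational observation that drives everything is that in a single factor
$$
\frac{(u_l-y_j+1)(u_l-y_j-\tth)}{(u_l-y_j)(u_l-y_j-\tth+1)}
$$
the numerator and denominator are monic quadratics in $u_l$ with the \emph{same} linear coefficient $-(2y_j+\tth-1)$, so their difference is the constant $-\tth$. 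Hence the factor equals $1-\tth/[(u_l-y_j)(u_l-y_j-\tth+1)]$, and expanding the remaining rational function at $u_l=\infty$ yields a power series in $u_l^{-1}$, starting at order $u_l^{-2}$, whose coefficients are polynomials in the single number $y_j$ (with $\tth$-dependent coefficients).

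Next I would take the finite product over $l$. Since each factor involves $y_j$ only through its own variable $u_l$, the coefficient of a fixed monomial $\prod_l u_l^{-k_l}$ is the product of the corresponding single-factor coefficients, hence again a polynomial in $y_j$. Multiplying by $\pi^\down_j$ and summing over $j$, each such coefficient becomes a finite $\R$-linear combination of the power sums $\sum_{j=1}^{d-1}\pi^\down_j y_j^m$. By the second identity in \eqref{tag6.12} these equal $\he_{m+2}(\la)\in\A_\tth$. Therefore every coefficient of the series $F^\down(u_1,u_2,\dots;\la)$ lies in $\A_\tth$, which is precisely the claim $F^\down\in\A_\tth[[u_1^{-1},u_2^{-1},\dots]]$.

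The argument for $F^\up$ is structurally identical. The same cancellation of linear coefficients gives
$$
\frac{(u_l-x_i)(u_l-x_i+\tth-1)}{(u_l-x_i-1)(u_l-x_i+\tth)}=1+\frac{\tth}{(u_l-x_i-1)(u_l-x_i+\tth)},
$$
so the product over $l$ expands into a power series in the $u_l^{-1}$ with coefficients polynomial in $x_i$. Multiplying by the prefactor $(z+x_i)(z'+x_i)=zz'+(z+z')x_i+x_i^2$ keeps each coefficient polynomial in $x_i$, and summing against $\pi^\up_i$ converts it, via the first identity in \eqref{tag6.12}, into a finite combination of the functions $\h_m(\la)\in\A_\tth$.

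The only real obstacle is the bookkeeping that guarantees the coefficients remain \emph{polynomial} in the single variable $y_j$ (resp. $x_i$) after expansion, rather than merely rational — for only then does Lemma \ref{6.11} apply termwise. This is exactly what the clean form $1\mp\tth\cdot(\text{quadratic})^{-1}$ of each factor secures: at each fixed order in the $u_l^{-1}$ one gets a genuine polynomial in $y_j$ (resp. $x_i$) of bounded degree, and the weighted sum then lands in $\A_\tth$. I would also remark that at each order only finitely many generators $\he_m$ (resp. $\h_m$) occur, so no completion beyond the formal power series in the $u_l^{-1}$ allowed by the statement is needed.
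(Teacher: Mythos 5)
Your proof is correct and follows essentially the same route as the paper: the paper's (very terse) proof likewise observes that the $i$th summand of $F^\up$ and the $j$th summand of $F^\down$ lie in $\R[x_i][[u_1^{-1},u_2^{-1},\dots]]$ and $\R[y_j][[u_1^{-1},u_2^{-1},\dots]]$ respectively, and then invokes Lemma \ref{6.11} to turn the weighted power sums into $\h_m$ and $\he_{m+2}$. Your explicit cancellation $1\mp\tth/[(\text{monic quadratic})]$ is a clean way of making the polynomiality of the coefficients visible, and it is consistent with the expansion the paper itself carries out later in Lemma \ref{7.3}.
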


\begin{proof} Observe that the $i$th product in \eqref{tag7.5} and the $j$th product
in \eqref{tag7.6} can be viewed as elements of
$\R[x_i][[u_1^{-1},u_2^{-1},\dots]]$ and $\R[y_j][[u_1^{-1},u_2^{-1},\dots]]$,
respectively, and then apply Lemma \ref{6.11}.
\end{proof}

Formulas \eqref{tag7.3} and \eqref{tag7.4} combined with Lemma \ref{7.2} show
that the operators $(zz'+\tth n)U_{n,n+1}$ and $\tth(n+1)D_{n+1,n}$ are indeed
induced by certain operators $U$ and $D$ acting in $\A_\tth$, and the
transformation of the generating series $\HH(u_1)\HH(u_2)\dots$ under the
action of these two operators looks as follows (it is convenient to omit the
argument $\la$ in the formulas below):
\begin{equation}\label{tag7.7}
\begin{aligned}
 U(\HH(u_1)\HH(u_2)\dots)&=F^\up(u_1,u_2,\dots)\HH(u_1)\HH(u_2)\dots\\
D(\HH(u_1)\HH(u_2)\dots)&=F^\down(u_1,u_2,\dots)\HH(u_1)\HH(u_2)\dots
\end{aligned}
\end{equation}

These nice formulas contain in a compressed form all the information about the
action of $U$ and $D$ on the basis elements $\h_\rho$. Our next step is to
extract from \eqref{tag7.7} some explicit expressions for $U\h_\rho$ and
$D\h_\rho$ using \eqref{tag7.2} and Lemma \ref{6.11}.

\subsection{Action of $D$ and $U$ in the basis $\{\h_\rho\}$}\label{7-3}

We need to introduce some notation. Expand the products \eqref{tag6.7} and
\eqref{tag6.11} about $u=\infty$:
\begin{equation}\label{tag7.8}
\begin{aligned}
\frac{(u-x)(u-x+\tth-1)}{(u-x-1)(u-x+\tth)}&=\sum_{s=0}^\infty
a_s(x)u^{-s}, \qquad a_s\in\R[x],\\
\frac{(u-y+1)(u-y-\tth)}{(u-y)(u-y-\tth+1)}&= \sum_{s=0}^\infty b_s(y)u^{-s},
\qquad b_s\in\R[y].
\end{aligned}
\end{equation}

\begin{lemma}\label{7.3} We have
$$
a_0(x)=b_0(y)\equiv1, \qquad a_1(x)=b_1(y)\equiv0,
$$
and $a_s(x)$ and $b_s(y)$ are polynomials of degree $s-2$ for $s\ge2$. More
precisely, the two top degree terms of these polynomials are as follows
\begin{gather*}
 a_s(x)=(s-1)\tth
x^{s-2}+\frac{(s-1)(s-2)}2\,\tth(1-\tth)x^{s-3}\,+\dots,
\qquad s\ge 2\\
b_s(y)=-(s-1)\tth y^{s-2}+\frac{(s-1)(s-2)}2\,\tth(1-\tth)y^{s-3}\,+\dots,
\qquad s\ge 2.
\end{gather*}s
\end{lemma}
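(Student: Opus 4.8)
The plan is to pass to logarithms, where the two rational functions become linear combinations of terms $\log(u-a)$ whose expansions at $u=\infty$ are completely explicit. Writing $a_s(x)$ and $b_s(y)$ as coefficients of the exponential of these logarithms, I will show that the two highest-degree terms in $x$ (resp.\ $y$) are governed by a single elementary expression, which I then expand by the binomial theorem. This is in the same spirit as the computation in Proposition \ref{6.5}.

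Concretely, first I would take the logarithm of the first fraction and use $\log(u-a)=\log u-\sum_{m\ge1}\tfrac{a^m}{m}u^{-m}$ on each of its four linear factors. Collecting terms, the coefficient of $-u^{-m}/m$ is
$$c_m(x):=x^m+(x-\tth+1)^m-(x+1)^m-(x-\tth)^m,$$
and I would rewrite this as the iterated difference $c_m(x)=\bigl[f(x)-f(x-\tth)\bigr]-\bigl[f(x+1)-f(x+1-\tth)\bigr]$ with $f(w)=w^m$. Since each inner difference lowers the degree by one and the outer difference lowers it once more, $c_m$ is a polynomial of degree $m-2$ for $m\ge2$, while $c_1\equiv0$ by direct cancellation. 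The analogous computation for the second fraction produces $d_m(y)=(y-1)^m+(y+\tth)^m-y^m-(y+\tth-1)^m$, again of degree $m-2$ with $d_1\equiv0$. Exponentiating, $a_0=b_0=1$ and $a_1=b_1=0$ follow at once, since the log-series starts only at order $u^{-2}$.

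The key structural point is a degree count in the exponential. A monomial contributing to the coefficient $a_s(x)$ of $u^{-s}$ is a product of several factors $c_{m_k}(x)$ with $\sum_k m_k=s$, and its $x$-degree is $\sum_k(m_k-2)=s-2\,(\#\text{factors})$. Hence any product of two or more factors has degree at most $s-4$, strictly below $s-3$, so both the $x^{s-2}$ and the $x^{s-3}$ coefficients of $a_s$ come solely from the single linear term $-c_s(x)/s$; likewise for $b_s$ with $-d_s(y)/s$. It then remains to read off the two leading coefficients of $c_s$ and $d_s$. Expanding $(x-\tth)^s$ and then $(x+1)^{s-1}$ (resp.\ the corresponding factors for $d_s$) by the binomial theorem gives leading terms $-s(s-1)\tth\,x^{s-2}$ for $c_s$ and $+s(s-1)\tth\,y^{s-2}$ for $d_s$, together with a common subleading term $-\tfrac{s(s-1)(s-2)}2\,\tth(1-\tth)\,w^{s-3}$; dividing by $-s$ yields exactly the asserted expansions, the sign difference between $a_s$ and $b_s$ residing only in the top coefficient.

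The main obstacle is essentially bookkeeping: I must keep the binomial expansions consistent so that both the $w^{s-2}$ and the $w^{s-3}$ coefficients of the iterated differences are computed correctly (the subleading term mixes contributions from two consecutive binomial orders), and I must verify carefully that the multi-factor contributions in the exponential are genuinely confined to degrees $\le s-4$, so that they do not perturb the two top coefficients.
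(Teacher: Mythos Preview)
Your approach is correct and complete: the logarithmic expansion with the iterated-difference structure of $c_m$ and $d_m$ pins down the degrees, and your degree-counting argument in the exponential (products of $k\ge2$ factors land in degree $\le s-4$) cleanly isolates the single term $-c_s/s$ (resp.\ $-d_s/s$) as the sole source of the top two coefficients.

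The paper proceeds differently. Instead of taking logarithms, it performs a partial-fraction decomposition directly on the rational function, obtaining (with $v=u^{-1}$)
\[
\frac{(u-x)(u-x+\tth-1)}{(u-x-1)(u-x+\tth)}
=1+\frac{\tth v}{1+\tth}\left(\frac1{1-(x+1)v}-\frac1{1-(x-\tth)v}\right),
\]
and then expands each geometric series. This yields the \emph{closed form} $a_s(x)=\dfrac{\tth}{1+\tth}\bigl((x+1)^{s-1}-(x-\tth)^{s-1}\bigr)$ for $s\ge1$ (and an analogous formula for $b_s$), from which the two leading coefficients are read off by one binomial expansion. Your route trades this exact formula for a structural degree argument: you never write $a_s$ in closed form, but you prove that the exponential's higher products are negligible at the two top orders. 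The paper's method is shorter and gives more (an explicit $a_s$), while your method has the virtue of making transparent \emph{why} only one term matters, via the filtration-by-number-of-factors idea that also drives Proposition~\ref{6.5}.
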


\begin{proof} Setting $v=u^{-1}$ we get
\begin{gather*} \frac{(u-x)(u-x+\tth-1)}{(u-x-1)(u-x+\tth)}=1+\frac{\tth
v}{1+\tth}\left(\frac1{1-(x+1)v}-\frac1{1-(x-\tth)v}\right)\\
=1+\frac{\tth}{1+\tth}\sum_{s\ge1}v^s\left((x+1)^{s-1}-(x-\tth)^{s-1}\right)\\
=1+\frac{\tth}{1+\tth}\sum_{s\ge2}v^s\left((s-1)(1+\tth)x^{s-2}
+\frac{(s-1)(s-2)}2(1-\tth^2)x^{s-3}+\dots\right)\\
=1+\sum_{s\ge2}v^s\left((s-1)\tth x^{s-2}
+\frac{(s-1)(s-2)}2\tth(1-\tth)x^{s-3}+\dots\right),
\end{gather*}
which proves the claim concerning the first expansion. For the second expansion
the computation is analogous:
\begin{gather*}
\frac{(u-y+1)(u-y-\tth)}{(u-y)(u-y-\tth+1)}=1-\frac{\tth
v}{1-\tth}\left(\frac1{1-yv}-\frac1{1-(y+\tth-1)v}\right)\\
=1-\frac{\tth v}{1-\tth}\sum_{s\ge1}v^{s-1}\left(y^{s-1}-(y+\tth-1)^{s-1}\right)\\
=1-\frac{\tth}{1-\tth}\sum_{s\ge2}v^s\left((s-1)(1-\tth)y^{s-2}
-\frac{(s-1)(s-2)}2(1-\tth)^2y^{s-3}+\dots\right)\\
=1+\sum_{s\ge2}v^s\left(-(s-1)\tth y^{s-2}
+\frac{(s-1)(s-2)}2\tth(1-\tth)y^{s-3}+\dots\right).
\end{gather*}

\end{proof}

For a partition $\si=(\si_1,\si_2,\dots)$ we set
\begin{equation}\label{tag7.9}
a_\si(x)=\prod_i a_{\si_i}(x), \qquad b_\si(y)=\prod_i b_{\si_i}(y).
\end{equation}
Note that these polynomials vanish if $\si$ has a part equal to 1, because
$a_1(x)$ and $b_1(x)$ are identically equal to 0.

Observe that \eqref{tag7.8} and \eqref{tag7.9} imply
\begin{equation}\label{tag7.10}
\begin{aligned}
\prod_l\frac{(u_l-x)(u_l-x+\tth-1)}{(u_l-x-1)(u_l-x+\tth)} =\sum_{\si}
a_\si(x)m_\si(u_1^{-1},u_2^{-1},\dots) \\
\prod_l\frac{(u_l-y+1)(u_l-y-\tth)}{(u_l-y)(u_l-y-\tth+1)}= \sum_{\si}
b_\si(y)m_\si(u_1^{-1},u_2^{-1},\dots).
\end{aligned}
\end{equation}

Next, introduce linear maps
\begin{gather*}
f\to\langle f\rangle^\up, \qquad \R[x]\to\A_\tth,\\
g\to\langle g\rangle^\down, \qquad \R[y]\to\A_\tth,
\end{gather*}
by setting
\begin{equation}\label{tag7.11}
\langle x^m\rangle^\up=\h_m, \qquad \langle y^m\rangle^\down=\he_{m+2}, \qquad
m=0,1,2,\dots, \quad \h_0:=1.
\end{equation}
This definition is inspired by Lemma \ref{6.11}.

Finally, let $c^\rho_{\si\tau}$ be the structure constants of the algebra $\La$
in the basis of monomial symmetric functions:
$$
m_\si m_\tau=\sum_\rho c^\rho_{\si\tau}m_\rho.
$$
Note that $c^\rho_{\si\tau}$ vanishes unless $|\rho|=|\si|+|\tau|$.

Now we are in a position to compute $U\h_\rho$ and $D\h_\rho$:

\begin{lemma}\label{7.4} With the notation introduced above we have
\begin{gather}
 U\h_\rho=\sum_{\si,\tau:\,
|\si|+|\tau|=|\rho|}c^\rho_{\si\tau}\left\langle(z+x)(z'+x)
a_\si(x)\right\rangle^\up\h_\tau, \label{tag7.12}\\
D\h_\rho=\sum_{\si,\tau:\, |\si|+|\tau|=|\rho|}c^\rho_{\si\tau}\left\langle
b_\si(y)\right\rangle^\down\h_\tau. \label{tag7.13}
\end{gather}
\end{lemma}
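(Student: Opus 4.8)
The plan is to read off the action of $U$ and $D$ on a single basis element $\h_\rho$ from the closed-form generating-series identities \eqref{tag7.7}, by expanding every factor into monomial symmetric functions of the auxiliary variables $u_1^{-1},u_2^{-1},\dots$ and then matching coefficients. All manipulations take place in the completed algebra $\A_\tth[[u_1^{-1},u_2^{-1},\dots]]$, which is legitimate because, by Lemma \ref{7.2}, both $F^\up$ and $F^\down$ are elements of this ring.

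The computational core is a rewriting of $F^\up$ and $F^\down$ in terms of the maps $\langle\cdot\rangle^\up$ and $\langle\cdot\rangle^\down$. First I would note the evident linear extension of Lemma \ref{6.11}: since $\langle\cdot\rangle^\up$ and $\langle\cdot\rangle^\down$ are defined in \eqref{tag7.11} on monomials by exactly the formulas of that lemma, for every $f\in\R[x]$ and $g\in\R[y]$ one has $\sum_{i=1}^d \pi^\up_i f(x_i)=\langle f\rangle^\up$ and $\sum_{j=1}^{d-1}\pi^\down_j g(y_j)=\langle g\rangle^\down$ in $\A_\tth$. Substituting the expansion \eqref{tag7.10} into \eqref{tag7.5}, each per-index product becomes $\sum_\si a_\si(x_i)\,m_\si(u_1^{-1},\dots)$, and interchanging the sums gives
\[
F^\up(u_1,u_2,\dots)=\sum_\si m_\si(u_1^{-1},u_2^{-1},\dots)\sum_{i=1}^d (z+x_i)(z'+x_i)a_\si(x_i)\,\pi^\up_i .
\]
The inner sum is exactly $\langle (z+x)(z'+x)a_\si(x)\rangle^\up$ by the linearity extension, so $F^\up=\sum_\si m_\si\,\langle (z+x)(z'+x)a_\si\rangle^\up$; the identical argument applied to \eqref{tag7.6} yields $F^\down=\sum_\si m_\si\,\langle b_\si(y)\rangle^\down$.

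Next I would combine this with \eqref{tag7.2}, written as $\prod_l \HH(u_l)=\sum_\tau m_\tau(u_1^{-1},\dots)\,\h_\tau$ (summed over $\tau$ with no part equal to $1$). Multiplying the two series and invoking the structure constants $m_\si m_\tau=\sum_\rho c^\rho_{\si\tau}m_\rho$, the right-hand side of the first line of \eqref{tag7.7} becomes
\[
\sum_\rho m_\rho(u_1^{-1},\dots)\sum_{\si,\tau}c^\rho_{\si\tau}\big\langle (z+x)(z'+x)a_\si(x)\big\rangle^\up \h_\tau ,
\]
while substituting \eqref{tag7.2} into its left-hand side gives $\sum_\rho m_\rho(u_1^{-1},\dots)\,U\h_\rho$. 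Equating the coefficient of $m_\rho$ produces \eqref{tag7.12}; the constraint $|\si|+|\tau|=|\rho|$ is automatic since $c^\rho_{\si\tau}=0$ unless $|\rho|=|\si|+|\tau|$. The formula \eqref{tag7.13} for $D$ follows verbatim from the second line of \eqref{tag7.7}, with $F^\down$ replacing $F^\up$ and $\langle b_\si(y)\rangle^\down$ replacing $\langle (z+x)(z'+x)a_\si\rangle^\up$.

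The one step that genuinely needs justification is the final coefficient matching. The identities \eqref{tag7.7} hold for an arbitrary finite collection $u_1,u_2,\dots$, and the functions $m_\rho(u_1^{-1},\dots)$ are linearly independent as soon as the number of variables is at least $\ell(\rho)$. Thus for each fixed $\rho$ one picks sufficiently many variables and reads off the $m_\rho$-coefficient unambiguously — the same device already used tacitly in passing from $\prod_l H(u_l)$ to \eqref{tag7.2}. I expect this bookkeeping, together with keeping track of which $\rho$ omit parts equal to $1$, to be the only mildly delicate point; the remainder is a direct substitution and therefore routine.
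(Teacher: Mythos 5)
Your proposal is correct and follows essentially the same route as the paper's own proof: both expand the identities \eqref{tag7.7} using \eqref{tag7.2} and \eqref{tag7.10}, identify the coefficients of $F^\up$ and $F^\down$ in the monomial basis $m_\si(u_1^{-1},u_2^{-1},\dots)$ as $\langle(z+x)(z'+x)a_\si(x)\rangle^\up$ and $\langle b_\si(y)\rangle^\down$, and then match coefficients of $m_\rho$ via the structure constants $c^\rho_{\si\tau}$. The only difference is one of presentation: you spell out the linear extension of Lemma \ref{6.11} and the linear-independence argument justifying coefficient extraction, steps the paper compresses into ``this directly follows from \eqref{tag7.5}, \eqref{tag7.6}, and \eqref{tag7.10}.''
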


\begin{proof} Write
\begin{gather*} F^\up(u_1,u_2,\dots)=\sum_\si F^\up_\si
m_\si(u_1^{-1},u_2^{-1},\dots), \qquad
F^\up_\si\in\A_\tth,\\
F^\down(u_1,u_2,\dots)=\sum_\si F^\down_\si m_\si(u_1^{-1},u_2^{-1},\dots),
\qquad F^\down_\si\in\A_\tth.
\end{gather*}

{}From \eqref{tag7.2} and \eqref{tag7.7} we get
$$
\sum_\rho m_\rho(u_1^{-1},u_2^{-1},\dots)U\h_\rho=\left(\sum_\si F^\up_\si
m_\si(u_1^{-1},u_2^{-1},\dots)\right)\left(\sum_\tau
m_\tau(u_1^{-1},u_2^{-1},\dots)\h_\tau\right),
$$
which implies
$$
U\h_\rho=\sum_{\si,\tau:\,
|\si|+|\tau|=|\rho|}c^\rho_{\si\tau}F^\up_\si\h_\tau.
$$
Likewise,
$$
D\h_\rho=\sum_{\si,\tau:\,
|\si|+|\tau|=|\rho|}c^\rho_{\si\tau}F^\down_\si\h_\tau.
$$

It remains to prove that
$$
F^\up_\si=\left\langle(z+x)(z'+x) a_\si(x)\right\rangle^\up, \qquad
F^\down_\si=\left\langle b_\si(y)\right\rangle^\down,
$$
but this directly follows from \eqref{tag7.5}, \eqref{tag7.6}, and
\eqref{tag7.10}.

Finally, we note that $F^\up_\si$ and $F^\down_\si$ vanish if $\si$ has a part
equal to 1, because in this case $a_\si(x)\equiv0$ and $b_\si(y)\equiv0$. This
agrees with the remark made just below \eqref{tag7.2}.
\end{proof}

\subsection{Top degree terms of $D$: proof of claim (i) of Theorem
7.1}\label{7-4}

The existence of the operator $D:\A_\tth\to\A_\tth$ satisfying \eqref{tag7.1}
has been established above (see \eqref{tag7.7}), and its uniqueness is obvious.

By virtue of \eqref{tag7.13} we can write
\begin{equation}\label{tag7.14}
D=\sum_\si D_\si, \qquad D_\si\h_\rho=\sum_{\tau:\, |\tau|=|\rho|-|\si|}\langle
b_\si(y)\rangle^\down c^\rho_{\si\tau}\h_\tau.
\end{equation}

\begin{lemma}\label{7.5} Let $\si\ne\varnothing$. Then
\begin{equation}\label{tag7.15}
\deg D_\si\le\max_{\rho,\tau}(\ell(\rho)-\ell(\tau)-2\ell(\si)+1),
\end{equation}
where the maximum is taken over all pairs $(\rho,\tau)$ such that
$c^\rho_{\si\tau}\ne0$.

Furthermore, a more rough but simpler estimate is
\begin{equation}\label{tag7.16}
\deg D_\si\le -\ell(\si)+1.
\end{equation}
\end{lemma}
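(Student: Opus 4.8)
The plan is to propagate degree bounds through the explicit formula \eqref{tag7.14} for $D_\si$, reducing the whole estimate to (a) the degree of the single element $\langle b_\si(y)\rangle^\down\in\A_\tth$ and (b) a length inequality for the partitions occurring in the product $m_\si m_\tau$. Since $\{\h_\rho\}$ is a basis of $\A_\tth$ and the claimed bounds are uniform constants, it suffices to control $\deg(D_\si\h_\rho)-\deg\h_\rho$ on each basis vector.

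First I would compute $\deg\langle b_\si(y)\rangle^\down$. If $\si$ has a part equal to $1$, then $b_\si(y)\equiv0$ by Lemma \ref{7.3}, so $D_\si=0$ and the estimates are vacuous; hence assume every part of $\si$ is $\ge2$. By Lemma \ref{7.3} the factor $b_{\si_i}(y)$ has degree $\si_i-2$ in $y$, so $b_\si(y)=\prod_i b_{\si_i}(y)$ is a polynomial of degree $|\si|-2\ell(\si)$. Because the linear map $\langle\,\cdot\,\rangle^\down$ sends $y^m$ to $\he_{m+2}$ and $\deg\he_{m+2}=m+1$ by Corollary \ref{6.7}, a polynomial of degree $D$ in $y$ is sent to an element of degree $\le D+1$; thus $\deg\langle b_\si(y)\rangle^\down\le|\si|-2\ell(\si)+1$.

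Next I would exploit that in \eqref{tag7.14} the factor $\langle b_\si(y)\rangle^\down$ does not depend on $\tau$ and so pulls out of the sum:
\[
D_\si\h_\rho=\langle b_\si(y)\rangle^\down\!\!\sum_{\tau:\,|\tau|=|\rho|-|\si|}\!\!c^\rho_{\si\tau}\h_\tau.
\]
Using $\deg\h_\tau=|\tau|-\ell(\tau)=|\rho|-|\si|-\ell(\tau)$ together with $\deg(fg)\le\deg f+\deg g$, we get $\deg(D_\si\h_\rho)\le\max_\tau(|\rho|-\ell(\tau)-2\ell(\si)+1)$, the maximum over $\tau$ with $c^\rho_{\si\tau}\ne0$. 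Subtracting $\deg\h_\rho=|\rho|-\ell(\rho)$ and taking the maximum over $\rho$ gives exactly \eqref{tag7.15}. To pass to \eqref{tag7.16} I would invoke the elementary fact that every $\rho$ with $c^\rho_{\si\tau}\ne0$ satisfies $\ell(\rho)\le\ell(\si)+\ell(\tau)$: each monomial of $m_\si m_\tau$ is a product $x^\al x^\be$ with $\al,\be$ of shapes $\si,\tau$, and the support of $\al+\be$ has at most $\ell(\si)+\ell(\tau)$ nonzero entries. Hence $\ell(\rho)-\ell(\tau)\le\ell(\si)$, and substituting into \eqref{tag7.15} yields $\ell(\rho)-\ell(\tau)-2\ell(\si)+1\le-\ell(\si)+1$.

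I expect the one place demanding care to be the computation of $\deg\langle b_\si(y)\rangle^\down$, which must correctly combine the polynomial degree supplied by Lemma \ref{7.3} with the degree shift by $+1$ hidden in the definition \eqref{tag7.11} of $\langle\,\cdot\,\rangle^\down$. Everything else is bookkeeping with the filtration, plus the single combinatorial input $\ell(\rho)\le\ell(\si)+\ell(\tau)$ for the monomial product, which is what converts the sharp but unwieldy bound \eqref{tag7.15} into the clean estimate \eqref{tag7.16}.
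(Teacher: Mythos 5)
Your proof is correct and follows essentially the same route as the paper's: both bound $\deg D_\si$ via $\deg\langle b_\si(y)\rangle^\down+\deg\h_\tau-\deg\h_\rho$ on basis vectors, compute $\deg\langle b_\si(y)\rangle^\down=|\si|-2\ell(\si)+1$ by combining Lemma \ref{7.3} with the degree shift in \eqref{tag7.11}, and deduce \eqref{tag7.16} from $\ell(\rho)\le\ell(\si)+\ell(\tau)$. The only cosmetic differences are that you prove the length inequality explicitly (the paper merely observes it) and settle for an upper bound on $\deg\langle b_\si(y)\rangle^\down$ where the paper identifies the exact top term, which suffices here.
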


\begin{proof} We have
\begin{gather*}
 \deg D_\si\le \max_{\rho,\tau} \left(\deg\langle
b_\si(y)\rangle^\down+\deg\h_\tau-\deg\h_\rho \right)\\
=\max_{\rho,\tau} \left(\deg\langle
b_\si(y)\rangle^\down+|\tau|-\ell(\tau)-|\rho|+\ell(\rho) \right)\\
= \max_{\rho,\tau} \left(\deg\langle
b_\si(y)\rangle^\down-|\si|-\ell(\tau)+\ell(\rho) \right).
\end{gather*}
Here the first line holds by the very definition of $D_\si$, the second line
holds because
$$
\deg\h_\tau=|\tau|-\ell(\tau), \qquad \deg\h_\rho=|\rho|-\ell(\rho)
$$
for any $\tau$ and $\rho$, and the third line holds because
$c^\rho_{\si\tau}\ne0$ implies $|\rho|=|\si|+|\tau|$.

Let us write down $\langle b_\si(y)\rangle^\down$ in more detail. Set
$\si=(\si_1,\dots,\si_{\ell(\si)})$. Here $\ell(\si)\ge1$ because
$\si\ne\varnothing$ by the assumption. We may assume that $\si$ does not have
parts equal to 1; otherwise $D_\si=0$ because $b_1(y)\equiv0$. Thus,
$\si_i\ge2$ for all $i$ and we have
\begin{gather*}
\langle b_\si(y)\rangle^\down=\left\langle \prod_{i=1}^{\ell(\si)}
b_{\si_i}(y)\right\rangle^\down\\
 =\left\langle\prod_{i=1}^{\ell(\si)}(-(\si_i-1)\tth
y^{\si_i-2}+\frac{(\si_i-1)(\si_i-2)}2\,\tth(1-\tth)y^{\si_i-3}\,+\dots)
\right\rangle^\down,
\end{gather*}
where we have used Lemma \ref{7.3}.

The expression inside the brackets has degree $|\si|-2\ell(\si)$ in $y$.
Consequently, the top degree term of $\langle b_\si(y)\rangle^\down$ is equal,
within a nonzero scalar factor, to $\he_{|\si|-2\ell(\si)+2}$, and the degree
of this element is $|\si|-2\ell(\si)+1$.

Therefore,
$$
\deg D_\si\le\max_{\rho,\tau} (|\si|-2\ell(\si)-|\si|-\ell(\tau)+\ell(\rho)+1)=
\max_{\rho,\tau}(\ell(\rho)-\ell(\tau)-2\ell(\si)+1),
$$
which is \eqref{tag7.15}.

To deduce \eqref{tag7.16} we observe that $c^\rho_{\si\tau}\ne0$ implies
$\ell(\rho)\le \ell(\si)+\ell(\tau)$.
\end{proof}

\begin{corollary}\label{7.6} If $\ell(\si)\ge3$ then $\deg D_\si\le-2$.
\end{corollary}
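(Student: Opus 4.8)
The plan is to read the claim off directly from the rougher bound \eqref{tag7.16} of Lemma \ref{7.5}. That estimate asserts $\deg D_\si\le -\ell(\si)+1$ for every nonempty partition $\si$. Under the hypothesis $\ell(\si)\ge3$ I would simply substitute to get $\deg D_\si\le -\ell(\si)+1\le -3+1=-2$, which is exactly the assertion. So the corollary is nothing more than the specialization of \eqref{tag7.16} to partitions of length at least three, and no fresh argument is needed.

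In other words, there is no genuine obstacle left at this stage: all the substantive work was already done in the proof of Lemma \ref{7.5}. There the degree of $\langle b_\si(y)\rangle^\down$ was pinned down using the top-degree expansion of $b_{\si_i}(y)$ from Lemma \ref{7.3}, and this was combined with the structural inequality $\ell(\rho)\le\ell(\si)+\ell(\tau)$ coming from $c^\rho_{\si\tau}\ne0$ to collapse the finer bound \eqref{tag7.15} into \eqref{tag7.16}. Granting Lemma \ref{7.5}, the present step is purely arithmetic: one inserts $\ell(\si)\ge3$ into the already-established estimate and reads off $\deg D_\si\le-2$.
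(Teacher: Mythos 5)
Your proposal is correct and coincides with the paper's own proof: Corollary \ref{7.6} is stated there as an immediate consequence of the rough estimate \eqref{tag7.16}, exactly the substitution $\ell(\si)\ge3$ giving $\deg D_\si\le-\ell(\si)+1\le-2$ that you perform. No further argument is needed, as you correctly observe.
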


\begin{proof} Indeed, this immediately follows from \eqref{tag7.16}.
\end{proof}

By Corollary \ref{7.6}, to prove claim {\rm(i)} of Theorem \ref{7.1} it
suffices to examine the contribution of the operators $D_\si$ with
$\ell(\si)=0$ (that is, $\si=\varnothing$), $\ell(\si)=1$, and $\ell(\si)=2$.
We do this in the three lemmas below.

\begin{lemma}[\rm Contribution from $\si=\varnothing$]\label{7.7}
$D_\varnothing=\h_2$.
\end{lemma}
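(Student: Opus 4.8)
The plan is to read off $D_\varnothing$ directly from the general expansion \eqref{tag7.14} by specializing the partition $\si$ to $\varnothing$. First I would identify $b_\varnothing(y)$: by the product convention \eqref{tag7.9}, namely $b_\si(y)=\prod_i b_{\si_i}(y)$, the value at $\si=\varnothing$ is an empty product, so $b_\varnothing(y)\equiv1=y^0$. Applying the linear map $\langle\,\cdot\,\rangle^\down$ and invoking its defining rule \eqref{tag7.11}, $\langle y^m\rangle^\down=\he_{m+2}$, I would then get $\langle b_\varnothing(y)\rangle^\down=\langle y^0\rangle^\down=\he_2$.

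Next I would dispose of the structure constants. Since $m_\varnothing=1$, we have $m_\varnothing m_\tau=m_\tau$, whence $c^\rho_{\varnothing\tau}=\de_{\rho\tau}$. Substituting this together with $|\tau|=|\rho|-|\varnothing|=|\rho|$ into \eqref{tag7.14} collapses the sum to its single term $\tau=\rho$, yielding $D_\varnothing\h_\rho=\he_2\,\h_\rho$ for every partition $\rho$. In other words, $D_\varnothing$ is nothing but the operator of multiplication by $\he_2$.

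It remains to match $\he_2$ with $\h_2$. For this I would return to the relation $\HE(u;\la)=-1/\HH(u;\la)$ recorded in \eqref{tag6.2}, i.e.\ $\HH\cdot\HE\equiv-1$, and expand the two series \eqref{tag6.3}. Equating the coefficients of $u^{-2}$ on both sides, and using the vanishing $\h_1=\he_1\equiv0$ noted right after \eqref{tag6.3}, gives $\he_2-\h_2=0$, so $\he_2=\h_2$. Combining this with the previous paragraph produces $D_\varnothing\h_\rho=\h_2\,\h_\rho$ for all $\rho$, that is, $D_\varnothing=\h_2$, as asserted.

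The whole argument is bookkeeping, so I do not anticipate any real obstacle; the only step requiring a touch of care is the identity $\he_2=\h_2$, which is not literally one of the displayed generator relations and must be extracted from $\HH\cdot\HE\equiv-1$. I would emphasize that Lemma \ref{6.12} by itself does not supply it, since that lemma only relates $\he_m$ to the elementary images $\e_m$.
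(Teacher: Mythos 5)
Your proof is correct and structurally identical to the paper's: both specialize \eqref{tag7.14} at $\si=\varnothing$, observe that $c^\rho_{\varnothing\tau}=\de_{\rho\tau}$ so the sum collapses to the single term $\tau=\rho$, identify $\langle b_\varnothing(y)\rangle^\down=\langle 1\rangle^\down=\he_2$, and finish by showing $\he_2=\h_2$. The only divergence is in that last identification, and there your remark about the paper's toolkit is half right: the paper \emph{does} invoke Lemma \ref{6.12}, writing $\he_2=-\e_2$, but it supplements it with the classical identity $h_2+e_2=h_1^2$ in $\La$, which the covering homomorphism carries to $-\e_2=\h_2$ because $\h_1=0$; so Lemma \ref{6.12} alone indeed does not suffice, yet it is supplemented rather than bypassed. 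Your alternative --- extracting the coefficient of $u^{-2}$ from $\HH(u;\la)\,\HE(u;\la)\equiv-1$ and using $\h_1=\he_1\equiv0$ --- is equally valid and marginally more self-contained; in substance it is the same computation, since $h_2+e_2=h_1^2$ is precisely the $u^{-2}$ coefficient of the relation $E(u)=1/H(-u)$ on which the proof of Lemma \ref{6.12} itself rests.
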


\begin{proof} Indeed, if $\si=\varnothing$ then $\tau$ has to be equal to
$\rho$, and then $c^\rho_{\si\tau}=1$. On the other hand, $\langle
b_\si(y)\rangle^\down$ reduces to $\langle 1\rangle^\down=\he_2$.

Next, $\he_2=-\e_2$ (see Lemma \ref{6.12}) and the identity $h_2+e_2=h_1^2$ in
$\La$ implies the identity $-\e_2=\h_2+\h_1^2$ in $\A_\tth$. Since $\h_1=0$, we
conclude from \eqref{tag7.14} that $D_\varnothing$ is the operator of
multiplication by $\h_2$.
\end{proof}

\begin{lemma}[\rm Contribution from $\si$'s with $\ell(\si)=2$]\label{7.8}
$$
\sum_{\si:\, \ell(\si)=2}D_\si =\frac12\tth^2\sum_{r,s\ge2}(r-1)(s-1)
\h_{r+s-2}\frac{\pd^2}{\pd \h_r \pd\h_s} +\text{\rm terms of degree $\le-2$}.
$$
\end{lemma}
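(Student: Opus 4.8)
The plan is to work entirely within \eqref{tag7.14}, which already expresses $D_\si$ through the structure constants $c^\rho_{\si\tau}$ and the element $\langle b_\si(y)\rangle^\down\in\A_\tth$, and to keep only what survives in degree $-1$. By Corollary \ref{7.6} every $D_\si$ with $\ell(\si)\ge3$ has degree $\le-2$, while the degree computation in the proof of Lemma \ref{7.5} shows that a single term $c^\rho_{\si\tau}\langle b_\si(y)\rangle^\down\h_\tau$ contributes in degree $\ell(\rho)-\ell(\tau)-2\ell(\si)+1$. For $\ell(\si)=2$ this is $\le-1$, with equality forcing simultaneously that $\langle b_\si(y)\rangle^\down$ be taken at its top degree and that $\ell(\tau)=\ell(\rho)-2$. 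First I would record the top degree term of $\langle b_\si(y)\rangle^\down$: for $\si=(\si_1,\si_2)$ with both parts $\ge2$, Lemma \ref{7.3} gives $b_\si(y)=b_{\si_1}(y)b_{\si_2}(y)=(\si_1-1)(\si_2-1)\tth^2 y^{|\si|-4}+(\text{lower degree})$, so by \eqref{tag7.11}
$$
\langle b_\si(y)\rangle^\down=(\si_1-1)(\si_2-1)\tth^2\,\he_{|\si|-2}+(\text{strictly lower degree}).
$$
Since the target is a differential operator in the $\h_m$, I would then pass from $\he$ to $\h$: expanding $\HH(u)\HE(u)=-1$ (see \eqref{tag6.2}) and using $\h_1=\he_1=0$ gives $\he_n=\h_n-\sum_{2\le i\le n-2}\h_i\he_{n-i}$, whence $\he_n=\h_n+(\deg\le n-2)$. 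Thus in degree $-1$ the factor $\langle b_\si(y)\rangle^\down$ may be replaced by $(\si_1-1)(\si_2-1)\tth^2\h_{\si_1+\si_2-2}$.

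Next I would pin down the surviving structure constant. The constraints $\ell(\tau)=\ell(\rho)-2=\ell(\rho)-\ell(\si)$ and $|\rho|=|\si|+|\tau|$ force $\ell(\rho)=\ell(\si)+\ell(\tau)$, the maximal possible length, so that $\rho$ must be the disjoint union $\rho=\si\sqcup\tau$; hence $\tau=\rho\setminus\si$ is unique and exists precisely when $\rho$ contains $\si_1,\si_2$ as a sub-multiset. For this top-length term the coefficient is the classical highest coefficient of the monomial product $m_\si m_\tau$, namely $c^\rho_{\si\tau}=\prod_{k\ge1}\binom{m_k(\rho)}{m_k(\si)}$, where $m_k(\cdot)$ denotes the number of parts equal to $k$; I would justify this by counting, for a fixed monomial of shape $\rho$, the ways of splitting its variables into a shape-$\si$ group and a shape-$\tau$ group. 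Specializing, if $\si_1\ne\si_2$ the coefficient is $m_{\si_1}(\rho)\,m_{\si_2}(\rho)$, while if $\si_1=\si_2=r$ it is $\binom{m_r(\rho)}{2}$.

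Finally I would assemble the degree $-1$ part of $\sum_{\ell(\si)=2}D_\si$ on a basis monomial $\h_\rho=\prod_k\h_k^{m_k(\rho)}$ and compare it termwise with the proposed operator applied to the same monomial, writing $m_k=m_k(\rho)$. On the differential-operator side, $\h_{r+s-2}\frac{\pd^2}{\pd\h_r\pd\h_s}\h_\rho$ yields exactly the diagram $(\rho\setminus\{r,s\})\cup\{r+s-2\}$ with weight $m_r m_s$ for $r\ne s$ and $m_r(m_r-1)$ for $r=s$; on the $D_\si$ side the steps above produce, for each unordered $\si=\{r,s\}\subseteq\rho$, the same diagram with weight $(r-1)(s-1)\tth^2$ times $m_r m_s$ (resp. $\binom{m_r}{2}$ when $r=s$). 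The factor $\tfrac12$ in the claimed operator is precisely what reconciles the sum over ordered pairs $(r,s)$ on the differential-operator side with the sum over unordered $\si$ on the $D_\si$ side, the diagonal $r=s$ matching $\binom{m_r}{2}=\tfrac12 m_r(m_r-1)$. Matching these weights establishes the identity modulo degree $\le-2$. The hard part will be the bookkeeping in this last matching step — isolating the unique top-length $\tau=\rho\setminus\si$, evaluating the highest monomial-product coefficient, and tracking the ordered/unordered and $r=s$ conventions — rather than any conceptual difficulty; the $\he\to\h$ replacement and the degree truncations are routine once the leading coefficients are in hand.
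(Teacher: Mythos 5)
Your proof is correct and follows essentially the same route as the paper: isolate, via the degree estimate of Lemma \ref{7.5}, the top-length case $\rho=\si\sqcup\tau$, take the leading coefficient of $\langle b_\si(y)\rangle^\down$ from Lemma \ref{7.3}, replace $\he_r$ by $\h_r$ modulo lower-degree terms, and reconcile the unordered sum over $\si$ with the ordered sum in the operator through the factor $\tfrac12$ and the diagonal case $\binom{m_r}{2}=\tfrac12 m_r(m_r-1)$. Your explicit binomial formula $c^\rho_{\si\tau}=\prod_k\binom{m_k(\rho)}{m_k(\si)}$ is just a more careful rendering of the paper's bookkeeping, which instead sums over position pairs $i<j$ in $\rho$ (counting each pair with coefficient $1$), and your derivation of $\he_n=\h_n+(\text{degree}\le n-2)$ from $\HH(u)\HE(u)=-1$ is an equivalent substitute for the paper's appeal to Lemma \ref{6.12} and the identity \eqref{tag7.17} in $\La$.
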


\begin{proof} Let $\ell(\si)=2$, so that $\si=(\si_1\ge\si_2>0)$. Recall that we
may assume $\si_2\ge2$ (otherwise $D_\si=0$). Below $\rho$ and $\tau$ are the
same as in Lemma \ref{7.5}. In particular, $\ell(\rho)\le\ell(\tau)+2$. If
$\ell(\rho)<\ell(\tau)+2$, then the argument of Lemma \ref{7.5} says that the
corresponding contribution to $D_\si$ has degree $\le-2$. Thus, we may take
into account only those $(\rho,\si)$ for which $\ell(\rho)=\ell(\tau)+2$. This
means $\rho=\si\cup\tau$, that is, the nonzero parts of $\rho$ are the disjoint
union of those in $\si$ and $\tau$. In other words, for some $i<j\le\ell(\rho)$
$$
\si_1=\rho_i, \quad \si_2=\rho_j, \quad
\tau=\{\rho_1,\dots,\rho_{\ell(\rho}\}\setminus\{\rho_i,\rho_j\}.
$$
In this case $c^\rho_{\si\tau}=1$

Furthermore, the argument in Lemma \ref{7.5} also shows that in $\langle
b_\si(y)\rangle^\down$, only the top degree term is relevant. This top degree
term is
$$
\langle(-(\si_1-1)\tth y^{\si_1-2}) (-(\si_2-1)\tth
y^{\si_2-2})\rangle^\down=\tth^2(\si_1-1)(\si_2-1)\he_{\si_1+\si_2-2}.
$$

It follows (see \eqref{tag7.12}) that
\begin{multline*}
\left(\sum_{\si:\, \ell(\si)=2}D_\si\right)\h_\rho =\tth^2\sum_{1\le
i<j\le\ell(\rho)}
(\rho_i-1)(\rho_j-1)\he_{\rho_i+\rho_j-2}\h_{\rho\setminus\{\rho_i,\rho_j\}}\\
+\text{\rm negligible terms}.
\end{multline*}

Therefore,
\begin{multline*}
\sum_{\si:\, \ell(\si)=2}D_\si =\tth^2\sum_{r_1>r_2\ge2}(r_1-1)(r_2-1)
\he_{r_1+r_2-2}\frac{\pd^2}{\pd
\h_{r_1}\pd\h_{r_2}}\\+\frac12\tth^2\sum_{r\ge2}(r-1)^2\he_{2r-2}\frac{\pd^2}{\pd
\h_r^2} +\text{\rm terms of degree $\le-2$}.
\end{multline*}

Observe that
$$
\he_r=\h_r+\text{\rm lower degree terms}, \quad r\ge2.
$$
Indeed, recall that $\he_r=(-1)^{r-1}\e_r$ (Lemma \ref{6.12}). In the algebra
$\La$, one has
\begin{multline}\label{tag7.17}
(-1)^{r-1}e_r=h_r-(h_1h_{r-1}+h_2h_{r-2}+\dots
+h_{r-1}h_1)\\+\text{\rm linear combination of triple, etc., products of $h_1,
h_2,\dots$}
\end{multline}
Projecting to $\A_\tth$ we get
\begin{multline}\label{tag7.18}
\he_r=\h_r-(\h_2\h_{r-2}+\dots +\h_{r-2}\h_2)\\+\text{\rm linear
combination of triple, etc., products of $\h_2, \h_3, \dots$},
\end{multline}
because $\h_1=0$. In \eqref{tag7.17}, all terms are homogeneous elements of
$\La$ of one and the same degree $r$. However, in \eqref{tag7.18} the only
terms of highest degree (with respect to the filtration of $\A_\tth$) are
$\he_r$ and $\h_r$. Consequently, replacing $\he_r$ by $\h_r$ affects only
negligible terms.

Thus, we get
\begin{multline*}
\sum_{\si:\, \ell(\si)=2}D_\si =\tth^2\sum_{r_1>r_2\ge2}(r_1-1)(r_2-1)
\h_{r_1+r_2-2}\frac{\pd^2}{\pd
\h_{r_1}\h_{r_2}}\\+\frac12\tth^2\sum_{r\ge2}(r-1)^2\h_{2r-2}\frac{\pd^2}{\pd
\h_r^2} +\text{\rm terms of degree $\le-2$},
\end{multline*}
which is equivalent to the desired expression.
\end{proof}

\begin{lemma}[\rm Contribution from $\si$'s with $\ell(\si)=1$]\label{7.9}
\begin{gather*}
\sum_{\si:\, \ell(\si)=1}D_\si =-\tth\sum_{r\ge2}(r-1)\h_r\frac{\pd}{\pd\h_r}\\
+\frac12\tth(1-\tth)\sum_{r\ge3}(r-1)(r-2)\h_{r-1}\frac{\pd}{\pd\h_r}\\
+\frac12\tth\sum_{r,s\ge2}(r+s)\h_r\h_s\frac{\pd}{\pd\h_{r+s}}\\
+\text{\rm terms of degree $\le-2$}.
\end{gather*}
\end{lemma}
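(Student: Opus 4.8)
The plan is to follow the template of Lemma \ref{7.8}, now analyzing the single-part partitions $\si=(s)$ with $s\ge2$ (recall that $D_\si=0$ as soon as $\si$ has a part equal to $1$, since $b_1(y)\equiv0$). Starting from \eqref{tag7.14}, I would classify the pairs $(\rho,\tau)$ with $c^\rho_{(s)\tau}\ne0$ according to the relation between $\ell(\rho)$ and $\ell(\tau)$. Because $\si$ has a single part, exactly two situations occur: either $\rho=\tau\cup(s)$, so that a new part $s$ is created, $\ell(\rho)=\ell(\tau)+1$, and $c^\rho_{(s)\tau}$ equals the number of parts of $\rho$ equal to $s$; or $\rho$ is obtained from $\tau$ by enlarging one part $t$ into $t+s$, so that $\ell(\rho)=\ell(\tau)$ and $c^\rho_{(s)\tau}$ equals the number of parts of $\rho$ equal to $t+s$. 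By the degree bookkeeping of Lemma \ref{7.5} (inequality \eqref{tag7.15}), the new-part family contributes terms of degree $\le0$ and the enlarged-part family contributes terms of degree $\le-1$; hence it suffices to retain, in each family, the top one or two graded pieces and discard everything of degree $\le-2$.

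In the new-part family I would insert the expansion of $\langle b_s(y)\rangle^\down$ supplied by Lemma \ref{7.3} together with \eqref{tag7.11}, namely $\langle b_s(y)\rangle^\down=-(s-1)\tth\,\he_s+\tfrac12(s-1)(s-2)\tth(1-\tth)\,\he_{s-1}+(\text{degree}\le s-3)$, and then rewrite each $\he_m$ through $\he_m=\h_m+(\text{lower degree})$ as in \eqref{tag7.18}. Since here $\h_s\,\h_{\rho\setminus(s)}=\h_\rho$, the leading piece $-(s-1)\tth\,\h_s$ reproduces the diagonal operator $-\tth\sum_{r\ge2}(r-1)\h_r\,\pd/\pd\h_r$, while the $\he_{s-1}$-term, after replacing $\he_{s-1}$ by $\h_{s-1}$, yields $\tfrac12\tth(1-\tth)\sum_{r\ge3}(r-1)(r-2)\h_{r-1}\,\pd/\pd\h_r$. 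The essential point is that the leading coefficient $-(s-1)\tth$ multiplies the \emph{whole} element $\he_s$, whose quadratic correction $-\sum_{a=2}^{s-2}\h_a\h_{s-a}$ (see \eqref{tag7.18}) is of degree $s-2$, one below $\h_s$, and therefore still contributes at operator-degree $-1$; this produces a partial splitting operator $\tth\sum_{a,b\ge2}(a+b-1)\h_a\h_b\,\pd/\pd\h_{a+b}$.

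In the enlarged-part family only the leading term $-(s-1)\tth\,\h_s$ of $\langle b_s(y)\rangle^\down$ survives at degree $-1$ (the rest drops to degree $\le-2$). Writing $\tau$ as $\rho$ with a part $t+s$ reduced to $t$ and using $\h_s\,\h_t\,\h_{\rho\setminus(t+s)}$, this family contributes the complementary splitting operator $-\tth\sum_{a,b\ge2}(a-1)\h_a\h_b\,\pd/\pd\h_{a+b}$. Adding the two splitting contributions gives $\tth\sum_{a,b\ge2}b\,\h_a\h_b\,\pd/\pd\h_{a+b}$, and symmetrizing in $a\leftrightarrow b$ (legitimate because $\h_a\h_b$ is symmetric) collapses it to $\tfrac12\tth\sum_{r,s\ge2}(r+s)\h_r\h_s\,\pd/\pd\h_{r+s}$, the third asserted term.

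The main obstacle is exactly this degree-$(-1)$ splitting coefficient: it is fed by two unrelated mechanisms — the quadratic tail of $\he_s=\h_s+\dots$ living inside the new-part family, and the leading term living inside the enlarged-part family — and only their correctly weighted sum, followed by the $a\leftrightarrow b$ symmetrization, produces the clean coefficient $\tfrac12\tth(r+s)$. Naively replacing $\he_s$ by $\h_s$ in the new-part family spoils both the sign and the magnitude of this term, so the careful separation of graded pieces and the tracking of the multiplicities $c^\rho_{(s)\tau}$ is where the real work lies; everything else reduces to the same routine degree counting already carried out in Lemmas \ref{7.5}--\ref{7.8}.
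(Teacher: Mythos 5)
Your proof is correct and takes essentially the same route as the paper's: the same split into the cases $\ell(\tau)=\ell(\rho)-1$ (new part) and $\ell(\tau)=\ell(\rho)$ (enlarged part), the same two-term expansion of $\langle b_s(y)\rangle^\down$ from Lemma \ref{7.3}, the same key observation that the quadratic tail of $\he_s=\h_s-\sum_a\h_a\h_{s-a}+\dots$ contributes at operator degree $-1$ and must be kept, and the same final recombination and $a\leftrightarrow b$ symmetrization yielding the coefficient $\tfrac12\tth(r+s)$. If anything, your explicit identification of $c^\rho_{(s)\tau}$ with part multiplicities (which are then absorbed by the partial derivatives) is slightly more careful than the paper's shorthand.
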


\begin{proof} Let $\ell(\si)=1$, so that $\si=(s)$ with $s\ge2$. Below $\rho$
and $\tau$ are the same as in Lemma \ref{7.5}. Two cases are possible:
$\ell(\tau)=\ell(\rho)-1$ and $\ell(\tau)=\ell(\rho)$. Let us examine them
separately.

Assume $\ell(\tau)=\ell(\rho)-1$. This means that, for some
$i=1,\dots,\ell(\rho)$, we have $\si=(\rho_i)$  and
$\tau=\rho\setminus\{\rho_i\}$. Note that then $c^\rho_{\si\tau}=1$. We argue
as in the proof of Lemma \ref{7.8}, the only difference is that we have to take
into account not only the top degree term in $\langle b_\si(y)\rangle^\down$
but also the next term. Thus, applying Lemma \ref{7.3}, we write
$$
\langle b_\si(y)\rangle^\down=\langle b_s(y)\rangle^\down=-(s-1)\tth
\he_s+\frac{(s-1)(s-2)}2\,\tth(1-\tth)\he_{s-1}\,+\dots\,.
$$
According to \eqref{tag7.14}, this gives rise to the terms
$$
-\tth\sum_{r\ge2}(r-1)\he_r\frac{\pd}{\pd\h_r}
+\frac12\tth(1-\tth)\sum_{r\ge3}(r-1)(r-2)\he_{r-1}\frac{\pd}{\pd\h_r}\,.
$$

Now, assume $\ell(\tau)=\ell(\rho)$. This means that $\tau$ is obtained from
$\rho$ by subtracting $s$ from one of the parts $\rho_i$ of $\rho$; moreover,
this part $\rho_i$ should be $\ge s+2$. Note that $c^\rho_{\si\tau}$ is just
equal to the multiplicity of that part in $\rho$. Note also that only the top
degree term in $\langle b_s(y)\rangle^\down$ has a relevant contribution. This
gives rise to the terms
$$
-\tth\sum_{r\ge4,\,2\le s\le r-2}(s-1)\he_s\h_{r-s}\frac{\pd}{\pd\h_r}\,.
$$

Next, the above two expressions involve $\he_r$, $\he_{r-1}$, and
$\he_s$, which we have to express in terms of $\h_i$'s. This should
be done as follows:
$$
\he_r=\h_r-(\h_2\h_{r-2}+\dots +\h_{r-2}\h_2)+\dots, \qquad
\he_{r-1}=\h_{r-1}+\dots, \qquad \he_s=\h_s+\dots,
$$
where the rest terms denoted by dots contribute only to terms of degree $\le-2$
in $D$. Collecting all the terms together and slightly changing the notation of
indices we get
\begin{gather*}
-\tth\sum_{r\ge2}(r-1)\h_r\frac{\pd}{\pd\h_r}+\tth\sum_{r\ge2,
s\ge2}(r+s-1)\h_r\h_s\frac{\pd}{\pd\h_{r+s}}\\
+\frac12\tth(1-\tth)\sum_{r\ge3}(r-1)(r-2)\h_{r-1}\frac{\pd}{\pd\h_r}
-\tth\sum_{r\ge2, s\ge2}(s-1)\h_r\h_s\frac{\pd}{\pd\h_{r+s}}\,.
\end{gather*}
Now, putting together the second and fourth sums, there is a simplification,
which finally leads to the desired expression.
\end{proof}

The expressions obtained in Lemmas \ref{7.7}, \ref{7.8}, and \ref{7.9} give
together the result stated in claim {\rm(i)} of Theorem \ref{7.1}.

\subsection{Top degree terms of $U$: proof of claim (ii) of Theorem
7.1}\label{7-5}

The strategy of the proof is the same as in the preceding subsection. However,
we have to slightly modify our arguments because of the following
circumstances:

$\bullet$ Formula \eqref{tag7.12}, as compared to formula \eqref{tag7.13},
contains the additional factors $(z+x)(z'+x)$.

$\bullet$ As seen from \eqref{tag7.11}, there is a subtle difference in the
behavior of the degree of $\langle x^m\rangle^\up=\h_m$ and the degree of
$\langle y^m\rangle^\down=\he_{m+2}$. For the latter quantity we have a
``regular'' expression $\deg \langle y^m\rangle^\down=m+1$, valid for all
$m\ge0$, while a similar expression for the former quantity, $\deg \langle
x^m\rangle^\up=m-1$, holds for $m\ge1$ but fails for $m=0$.

In accordance with \eqref{tag7.12}, it is convenient to decompose $U$ as
follows
$$
U=\sum_\si(U^0_\si+U^1_\si+U^2_\si),
$$
where
\begin{gather} U^0_\si\h_\rho=zz'\sum_\tau\langle a_\si(x)\rangle^\up
c^\rho_{\si\tau}\h_\tau \label{tag7.19}\\
U^1_\si\h_\rho=(z+z')\sum_\tau\langle a_\si(x)x\rangle^\up
c^\rho_{\si\tau} \h_\tau \label{tag7.20}\\
U^2_\si\h_\rho=\sum_\tau\langle a_\si(x)x^2\rangle^\up c^\rho_{\si\tau}
\h_\tau. \label{tag7.21}
\end{gather}

\begin{lemma}[\rm Compare to Lemma \ref{7.5}]\label{7.10}
Let $\si\ne\varnothing$. Then the following estimate for $\deg U^1_\si$ and
$\deg U^2_\si$ holds
$$ \deg
U^p_\si\le\max_{\rho,\tau}(\ell(\rho)-\ell(\tau)-2\ell(\si)-1+p), \qquad p=1,2,
$$
where the maximum is taken over all pairs $(\rho,\tau)$ such that
$c^\rho_{\si\tau}\ne0$.

Furthermore, a more rough but simpler estimate is
$$
\deg U^p_\si\le -\ell(\si)-1+p, \qquad p=1,2.
$$
\end{lemma}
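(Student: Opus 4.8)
The plan is to transcribe the proof of Lemma \ref{7.5} almost verbatim, keeping track of the two differences flagged just before the statement: the extra factor $x^p$ in the brackets, and the fact that $\langle\,\cdot\,\rangle^\up$ handles degrees differently from $\langle\,\cdot\,\rangle^\down$. First I would dispose of the trivial case: if $\si$ has a part equal to $1$, then $a_\si(x)\equiv0$ by \eqref{tag7.9} and Lemma \ref{7.3}, so $U^1_\si=U^2_\si=0$ and the estimate holds trivially. Hence I may assume $\si_i\ge2$ for all $i$, and then Lemma \ref{7.3} gives that $a_\si(x)=\prod_i a_{\si_i}(x)$ is a polynomial in $x$ of degree exactly $\sum_i(\si_i-2)=|\si|-2\ell(\si)$.

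The crucial preliminary step is to bound $\deg\langle a_\si(x)x^p\rangle^\up$. Multiplying $a_\si(x)$ by $x^p$ yields a polynomial of $x$-degree $M:=|\si|-2\ell(\si)+p$ that is \emph{divisible by $x^p$}. Applying $\langle\,\cdot\,\rangle^\up$, which by \eqref{tag7.11} sends $x^m\mapsto\h_m$ with $\deg\h_m=m-1$ for $m\ge2$ and $\h_1=0$, I obtain
\[
\deg\langle a_\si(x)x^p\rangle^\up\le M-1=|\si|-2\ell(\si)+p-1.
\]
This is exactly where the second circumstance noted before the statement enters, and it is the only delicate point: the formula $\deg\langle x^m\rangle^\up=m-1$ fails at $m=0$, where $\h_0=1$ has degree $0$. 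The resolution is precisely that $x^p$ with $p\ge1$ kills the constant term, so only exponents $m\ge1$ actually occur; for those, $\deg\langle x^m\rangle^\up\le m-1$ holds uniformly (the borderline $m=1$, where $\h_1=0$, only improves the bound). Compared with Lemma \ref{7.5}, where $\langle\,\cdot\,\rangle^\down$ \emph{raised} the $y$-degree by $1$, here $\langle\,\cdot\,\rangle^\up$ \emph{lowers} it by $1$; this accounts for the net shift by $p-2$ in the final bound.

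With this in hand the rest is bookkeeping identical to Lemma \ref{7.5}. From the definitions \eqref{tag7.20}, \eqref{tag7.21} I would write
\[
\deg U^p_\si\le\max_{\rho,\tau}\bigl(\deg\langle a_\si(x)x^p\rangle^\up+\deg\h_\tau-\deg\h_\rho\bigr),
\]
the maximum taken over pairs with $c^\rho_{\si\tau}\ne0$. Using $\deg\h_\tau=|\tau|-\ell(\tau)$, $\deg\h_\rho=|\rho|-\ell(\rho)$, and $|\rho|=|\si|+|\tau|$, the combination $\deg\h_\tau-\deg\h_\rho$ collapses to $\ell(\rho)-\ell(\tau)-|\si|$. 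Substituting the bound from the previous paragraph gives the refined estimate
\[
\deg U^p_\si\le\max_{\rho,\tau}\bigl(\ell(\rho)-\ell(\tau)-2\ell(\si)-1+p\bigr).
\]
Finally, the inequality $\ell(\rho)\le\ell(\si)+\ell(\tau)$, valid whenever $c^\rho_{\si\tau}\ne0$, yields the rough estimate $\deg U^p_\si\le-\ell(\si)-1+p$, completing the proof. The main obstacle is thus not computational but conceptual: verifying that the $\h_0$ anomaly of $\langle\,\cdot\,\rangle^\up$ never contributes, which hinges on the divisibility by $x^p$.
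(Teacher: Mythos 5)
Your proof is correct and follows essentially the same route as the paper's: the same degree bookkeeping as in Lemma \ref{7.5} combined with the observation that, since $p\ge1$, the polynomial $a_\si(x)x^p$ has positive degree, so $\deg\langle a_\si(x)x^p\rangle^\up\le|\si|-2\ell(\si)+p-1$ and the $\h_0$ anomaly never enters. Your extra remark about divisibility by $x^p$ killing the constant term is a slightly more explicit way of making the same point the paper makes by noting the polynomial has degree $>0$.
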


Notice that the case of $U^0_\si$ requires a special investigation.

\begin{proof}
The argument is completely similar to that in Lemma \ref{7.5}. We have
\begin{gather*}
 \deg U^p_\si\le \max_{\rho,\tau} \left(\deg\langle
a_\si(x)x^p\rangle^\up+\deg\h_\tau-\deg\h_\rho \right)\\
=\max_{\rho,\tau} \left(\deg\langle
a_\si(x)x^p\rangle^\up+|\tau|-\ell(\tau)-|\rho|+\ell(\rho) \right)\\
=\max_{\rho,\tau} \left(\deg\langle
a_\si(x)x^p\rangle^\up-|\si|-\ell(\tau)+\ell(\rho) \right)
\end{gather*}

Since $p>0$ by the assumption, the polynomial $a_\si(x)x^p$ has degree $>0$
even if the polynomial $a_\si(x)$ is a constant. Therefore,
$$
\deg\langle a_\si(x)x^p\rangle^\up=|\si|-2\ell(\si)+p-1,
$$
which gives the first estimate. Then the second estimate follows
from the inequality $\ell(\rho)\le\ell(\si)+\ell(\tau)$.

\end{proof}

\begin{corollary}[\rm Compare to Corollary \ref{7.6}]\label{7.11}
We have:

{\rm(i)} $\deg U^1_\si\le-2$ if $\ell(\si)\ge2$;

{\rm(ii)} $\deg U^2_\si\le-2$ if $\ell(\si)\ge3$;
\end{corollary}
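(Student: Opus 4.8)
The plan is to invoke directly the rough estimate established in Lemma \ref{7.10}, namely
$$
\deg U^p_\si\le -\ell(\si)-1+p, \qquad p=1,2,
$$
and then substitute the hypothesis on $\ell(\si)$ in each of the two cases. This is the precise analogue of the way Corollary \ref{7.6} follows from the bound \eqref{tag7.16}, so no new idea is required; the substantive work has already been absorbed into the proof of Lemma \ref{7.10}.

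For part {\rm(i)}, I take $p=1$, which gives $\deg U^1_\si\le -\ell(\si)$. Under the assumption $\ell(\si)\ge2$ this immediately yields $\deg U^1_\si\le-2$, as claimed. For part {\rm(ii)}, I take $p=2$, which gives $\deg U^2_\si\le -\ell(\si)+1$. Under the assumption $\ell(\si)\ge3$ this yields $\deg U^2_\si\le-2$, completing the argument.

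There is no genuine obstacle in this corollary: both claims are immediate arithmetic consequences of the rough bound. What makes the bound usable is exactly the refinement over Lemma \ref{7.5} noted in the proof of Lemma \ref{7.10} --- namely that, because $p>0$, the polynomial $a_\si(x)x^p$ has positive degree even when $a_\si(x)$ is constant, so that $\deg\langle a_\si(x)x^p\rangle^\up=|\si|-2\ell(\si)+p-1$ --- together with the inequality $\ell(\rho)\le\ell(\si)+\ell(\tau)$ coming from $c^\rho_{\si\tau}\ne0$. The worth of the corollary is that it isolates the only values of $\ell(\si)$ (namely $\ell(\si)\le1$ for $U^1_\si$ and $\ell(\si)\le2$ for $U^2_\si$) that can contribute to the top degree terms of $U$, thereby restricting the subsequent case analysis exactly as Corollary \ref{7.6} did for $D$.
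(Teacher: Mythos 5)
Your proposal is correct and coincides with the paper's own proof: Corollary \ref{7.11} is derived there in one line from the second (rough) estimate of Lemma \ref{7.10}, exactly as you do, by substituting $p=1$, $\ell(\si)\ge2$ and $p=2$, $\ell(\si)\ge3$ respectively. Your accompanying remarks on why the rough bound holds and what the corollary is used for are accurate but not needed for the argument itself.
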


\begin{proof} Indeed, this follows at once from the second estimate in Lemma
\ref{7.10}.
\end{proof}

We will examine the cases $p=0$, $p=1$, and $p=2$ separately.

\begin{lemma}\label{7.12} {\rm(Contribution from $U^0_\si$'s)}
$$
\sum_\si U^0_\si=zz'+\tth zz'\frac{\pd}{\pd\h_2}+\text{\rm terms of degree
$\le-2$}.
$$
\end{lemma}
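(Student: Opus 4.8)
The plan is to work directly from the defining formula \eqref{tag7.19} for $U^0_\si$ and to determine, exactly as in Lemmas \ref{7.5} and \ref{7.10}, which partitions $\si$ contribute terms of degree $\ge-1$; all others are discarded. The one new feature, flagged in the remark preceding this lemma, is that the degree bookkeeping for $U^0$ is anomalous precisely because $\langle x^0\rangle^\up=\h_0=1$ has degree $0$ rather than the ``regular'' value $-1$. So I would first isolate the $\si$ for which $a_\si(x)$ is a \emph{constant}. By Lemma \ref{7.3}, $a_1\equiv0$ and $\deg a_s=s-2$ for $s\ge2$, so $a_\si(x)=\prod_i a_{\si_i}(x)$ is a nonzero constant exactly when every part of $\si$ equals $2$; indeed $a_2\equiv\tth$, whence $a_{(2^\ell)}(x)\equiv\tth^\ell$. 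For every other $\si\ne\varnothing$ having no part $1$, the polynomial $a_\si$ has positive degree, and the estimate of Lemma \ref{7.10} with $p=0$ applies verbatim (using $\deg\langle a_\si\rangle^\up=|\si|-2\ell(\si)-1$ and $\ell(\rho)\le\ell(\tau)+\ell(\si)$) to give $\deg U^0_\si\le-\ell(\si)-1\le-2$.

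It then remains to treat $\si=\varnothing$ and the anomalous family $\si=(2^\ell)$. For $\si=(2^\ell)$ we have $\langle a_\si\rangle^\up=\tth^\ell$, a scalar, so repeating the degree computation of Lemma \ref{7.10} but now with $\deg\langle a_\si\rangle^\up=0$ and $|\si|=2\ell$ yields $\deg U^0_{(2^\ell)}\le\ell(\rho)-\ell(\tau)-2\ell\le-\ell$, again via $\ell(\rho)\le\ell(\tau)+\ell(\si)$. Hence every $\si=(2^\ell)$ with $\ell\ge2$ is negligible, and only $\varnothing$ and $(2)$ survive. The case $\si=\varnothing$ is immediate: $a_\varnothing\equiv1$, $\langle1\rangle^\up=\h_0=1$, and $c^\rho_{\varnothing\tau}=\delta_{\rho\tau}$, so $U^0_\varnothing$ is the operator of multiplication by $zz'$.

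The heart of the computation is $\si=(2)$, where $\langle a_{(2)}\rangle^\up=\tth$ and $U^0_{(2)}\h_\rho=zz'\tth\sum_\tau c^\rho_{(2)\tau}\h_\tau$. Here I would use $m_{(2)}=p_2$ and split the sum according to whether $\ell(\tau)=\ell(\rho)-1$ (the part $2$ is appended, $\rho=\tau\cup\{2\}$) or $\ell(\tau)=\ell(\rho)$ (the part $2$ is merged into an existing part $\rho_i\ge4$). As in Lemma \ref{7.9}, the merging terms lower the degree by $2$ and are negligible, while the single appending term contributes at the top degree; for it the structure constant $c^\rho_{(2)\tau}$ equals $r_2$, the multiplicity of the part $2$ in $\rho$ (the coefficient of $m_\rho$ in $p_2\,m_{\rho\setminus\{2\}}$). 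Thus, modulo degree $\le-2$, $U^0_{(2)}\h_\rho\equiv zz'\tth\,r_2\,\h_{\rho\setminus\{2\}}$. Since $\h_\rho=\prod_k\h_k^{\,r_k}$ with $r_k$ the multiplicity of $k$ in $\rho$, one has $r_2\,\h_{\rho\setminus\{2\}}=\frac{\pd}{\pd\h_2}\h_\rho$, so $U^0_{(2)}$ coincides with $\tth zz'\,\frac{\pd}{\pd\h_2}$ up to negligible terms; adding the contribution from $\varnothing$ gives the asserted formula.

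The main obstacle, and the reason $U^0$ warrants separate treatment, is precisely this degree anomaly: one must check carefully that the only places where $\langle\cdot\rangle^\up$ of a constant produces $\h_0=1$ with the ``wrong'' (too large) degree are $\si=\varnothing$ and $\si=(2)$, and that the remaining anomalous partitions $\si=(2^\ell)$, $\ell\ge2$, are nonetheless pushed below degree $-1$ by the factor $|\si|=2\ell$ in the estimate. Once this is pinned down, the rest is routine bookkeeping identical to Lemmas \ref{7.5}, \ref{7.8}, and \ref{7.9}, together with the elementary identification of the multiplicity-times-deletion operator with $\frac{\pd}{\pd\h_2}$.
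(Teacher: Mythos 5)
Your proposal is correct and follows essentially the same route as the paper: isolate $\si=\varnothing$ (giving $zz'$), dispose of the ``regular'' $\si$ via the Lemma \ref{7.10} estimate, treat the anomalous constant case $a_\si\equiv\tth^{\ell}$ for $\si=(2^\ell)$ separately (negligible for $\ell\ge2$), and extract the term $\tth zz'\,\pd/\pd\h_2$ from $\si=(2)$ with $\tau=\rho\setminus\{2\}$. Your final step is in fact somewhat more explicit than the paper's (you compute $c^\rho_{(2)\tau}=r_2$ and identify the resulting operator with $\pd/\pd\h_2$, which the paper leaves implicit); the only loose point is the analogy ``as in Lemma \ref{7.9}'' for discarding the merging terms --- there those terms are \emph{not} negligible, whereas here they are, precisely because $\deg\langle a_{(2)}\rangle^\up=0$ rather than $s-1$.
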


\begin{proof} The contribution of $U^0_\varnothing$ is the constant term $zz'$:
this is shown by the same argument as in Lemma \ref{7.7}.

Assume $\si\ne\varnothing$. Then $\si=(\si_1,\dots,\si_{\ell(\si)})$ with
$\ell(\si)\ge1$. Recall that all $\si_i$ are $\ge2$. Arguing as in Lemma
\ref{7.10} we get
\begin{gather*} \deg U^0_\si\le\max_{\rho,\tau} \left(\deg\langle
a_\si(x)x^p\rangle^\up+|\tau|-\ell(\tau)-|\rho|+\ell(\rho) \right)\\
\le \max_{\rho,\tau}(\deg\langle a_\si(x)\rangle^\up-|\si|+\ell(\si)),
\qquad\text{\rm because $-\ell(\tau)+\ell(\rho)\le\ell(\si)$}.
\end{gather*}
Here $\rho$ and $\tau$ are the same as in Lemma \ref{7.10}.

In the ``regular case'', when the polynomial $a_\si(x)$ has degree $>0$, we can
apply the formula $\deg\langle a_\si(x)\rangle^\up=|\si|-2\ell(\si)-1$, which
implies
$$
\deg U^0_\si\le -\ell(\si)-1\le-2.
$$
The ``irregular case'' occurs when $\si_1=\dots=\si_{\ell(\si)}=2$ (see Lemma
\ref{7.3}). Then $\deg\langle a_\si(x)\rangle^\up=0$ and we get a weaker
inequality
$$
\deg U^0_\si\le -\ell(\si).
$$
If $\ell(\si)\ge2$, this is enough to conclude $\deg U^0_\si\le -2$.

Finally, examine the case $\si=(2)$. There are two possibilities:
$\ell(\tau)=\ell(\rho)-1$ and $\ell(\tau)=\ell(\rho)$. In the latter case the
estimate can be refined because then $-\ell(\tau)+\ell(\rho)=0$ is strictly
smaller than $\ell(\si)=1$, which again implies $\deg U^0_\si\le -2$.

Thus, the only substantial contribution arises when $\si=(2)$ and
$\tau=\rho\setminus\{(2)\}$. Taking into account \eqref{tag7.19} and Lemma
\ref{7.3}, this gives rise to the term $\tth zz' \pd/\pd\h_2$.
\end{proof}

\begin{lemma}[\rm Contribution from $U^1_\si$'s]\label{7.13}
$$
\sum_\si U^1_\si=\tth(z+z')\sum_{r\ge3}(r-1)\h_{r-1}\frac{\pd}{\pd\h_r}
+\text{\rm terms of degree $\le-2$}.
$$
\end{lemma}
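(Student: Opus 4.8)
The plan is to mirror the proof of Lemma~\ref{7.9} (and the warm-up Lemma~\ref{7.12}): reduce to small $\ell(\si)$ using the degree bounds, and then keep only top-degree terms. First I would invoke Corollary~\ref{7.11}(i): since $\deg U^1_\si\le-2$ whenever $\ell(\si)\ge2$, all such $\si$ are absorbed into the negligible remainder, so only $\si=\varnothing$ and $\ell(\si)=1$ can contribute at the top degree. The case $\si=\varnothing$ is immediate and instructive: here $a_\varnothing(x)\equiv1$, so by \eqref{tag7.20} and \eqref{tag7.11} the relevant quantity is $\langle a_\varnothing(x)x\rangle^\up=\langle x\rangle^\up=\h_1=0$, whence $U^1_\varnothing=0$.

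Next I would treat $\ell(\si)=1$, that is $\si=(s)$ with $s\ge2$ (parts equal to $1$ drop out since $a_1\equiv0$ by Lemma~\ref{7.3}). As in Lemma~\ref{7.9} I would split according to whether $\ell(\tau)=\ell(\rho)-1$ or $\ell(\tau)=\ell(\rho)$. The first estimate of Lemma~\ref{7.10} with $p=1$ shows that the case $\ell(\tau)=\ell(\rho)$ contributes only degree $\le-2$ and is therefore negligible; the surviving configuration is $\si=(\rho_i)$, $\tau=\rho\setminus\{\rho_i\}$, with $c^\rho_{\si\tau}$ equal to the multiplicity of the removed part $\rho_i$ in $\rho$. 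For this configuration only the leading term of $\langle a_s(x)x\rangle^\up$ matters; by Lemma~\ref{7.3} one has $a_s(x)x=(s-1)\tth\,x^{s-1}+\dots$, hence $\langle a_s(x)x\rangle^\up=(s-1)\tth\,\h_{s-1}+\dots$. Assembling the contributions through \eqref{tag7.20} and recognizing that summing over the parts $\rho_i$ (weighted by their multiplicity) is exactly the action of a single derivation yields
\begin{equation*}
\Big(\sum_\si U^1_\si\Big)\h_\rho
=\tth(z+z')\sum_{i:\,\rho_i\ge2}(\rho_i-1)\,\h_{\rho_i-1}\,\h_{\rho\setminus\{\rho_i\}}
+\text{negligible terms},
\end{equation*}
which as a differential operator reads $\tth(z+z')\sum_{r\ge2}(r-1)\h_{r-1}\,\pd/\pd\h_r$.

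The final point, and the only genuine subtlety, is the degree peculiarity of $\langle\,\cdot\,\rangle^\up$ flagged just before Lemma~\ref{7.10}: the clean formula $\deg\langle x^m\rangle^\up=m-1$ fails at $m=0$ (because $\h_0=1$), and the pattern is likewise broken at $m=1$ (because $\h_1=0$). Here the extra factor $x$ in $U^1$ promotes $\langle a_s(x)\rangle^\up$ to $\langle a_s(x)x\rangle^\up$, and for $s=2$ the leading term degenerates to $(s-1)\tth\langle x\rangle^\up=\tth\,\h_1=0$. Thus the $r=2$ summand vanishes identically, and the sum effectively begins at $r=3$, reproducing $\tth(z+z')\sum_{r\ge3}(r-1)\h_{r-1}\,\pd/\pd\h_r$ exactly as stated. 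I expect no computational difficulty beyond this bookkeeping; the main thing to watch is not to overlook that it is precisely the relation $\h_1=0$ which distinguishes the range $r\ge3$ from the naive $r\ge2$.
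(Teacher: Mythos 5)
Your proof is correct and follows essentially the same route as the paper's: reduce to $\ell(\si)\le1$ via Corollary \ref{7.11}, kill $U^1_\varnothing$ by $\h_1=0$, discard the $\ell(\tau)=\ell(\rho)$ configurations by the refined degree estimate, keep only the top term $(s-1)\tth\,\h_{s-1}$ of $\langle a_s(x)x\rangle^\up$, and note that the $r=2$ summand vanishes because $\h_1=0$. Your bookkeeping of $c^\rho_{\si\tau}$ as the multiplicity of the removed part (rather than the paper's parametrization by positions with coefficient $1$) is equivalent and, if anything, slightly more precise.
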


\begin{proof} Observe that $U^1_\varnothing=0$ because $\langle
a_\varnothing(x)x\rangle^\up=\h_1=0$. By Corollary \ref{7.11}, it suffices to
examine the case $\ell(\si)=1$, that is, $\si=(s)$ with $s\ge2$. We have
$$
\langle a_s(x)x\rangle^\up=(s-1)\tth \h_{s-1}+\dots,
$$
where the rest terms are negligible. Furthermore, if $\ell(\tau)=\ell(\rho)$
then the estimate of Corollary \ref{7.11} can be refined, which implies that
the contribution is negligible. Thus, we may assume $\ell(\tau)=\ell(\rho)-1$,
that is, $\tau=\rho\setminus\{(s)\}$. In accordance with \eqref{tag7.20}, this
produces the desired expression. Notice also that the restriction $r\ge3$
arises because $\h_{r-1}=0$ for $r=2$.
\end{proof}

It remains to compute $\sum_\si U^2_\si$. Here our arguments are
strictly parallel to those of the preceding subsection, because, due
to the extra factor $x^2$, the element $\langle
a_\si(x)x^2\rangle^\up$ has the same degree as the element $\langle
b_\si(x)\rangle^\down$, which we examined in the preceding
subsection.

In the next three lemmas we rely on \eqref{tag7.21}.

\begin{lemma}\label{7.14} {\rm(Contribution from $U^2_\varnothing$)}
$U^2_\varnothing=\h_2$.
\end{lemma}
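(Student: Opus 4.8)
The plan is to specialize the general formula \eqref{tag7.21} for $U^2_\si\h_\rho$ to the case $\si=\varnothing$ and read off the operator directly, exactly paralleling the treatment of $D_\varnothing$ in Lemma \ref{7.7}. First I would observe that when $\si$ is the empty partition, the polynomial $a_\si(x)=\prod_i a_{\si_i}(x)$ becomes the empty product, hence $a_\varnothing(x)\equiv1$ and $a_\varnothing(x)x^2=x^2$. Applying the linear map $\langle\,\cdot\,\rangle^\up$ and using its defining rule \eqref{tag7.11}, namely $\langle x^m\rangle^\up=\h_m$, gives $\langle a_\varnothing(x)x^2\rangle^\up=\langle x^2\rangle^\up=\h_2$.

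Next I would pin down the structure constants $c^\rho_{\varnothing\tau}$. Since $m_\varnothing=1$ is the unit of the algebra $\La$, the product $m_\varnothing m_\tau=m_\tau$ forces $c^\rho_{\varnothing\tau}=\delta_{\rho\tau}$. Substituting this into \eqref{tag7.21} collapses the sum over $\tau$ to the single term $\tau=\rho$ with coefficient $1$, yielding $U^2_\varnothing\h_\rho=\h_2\h_\rho$ for every $\rho$.

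Finally, because the elements $\{\h_\rho:\rho_1,\rho_2,\ldots\ne1\}$ form a basis of $\A_\tth$, this identity determines $U^2_\varnothing$ completely: it is the operator of multiplication by $\h_2$, which is precisely the assertion $U^2_\varnothing=\h_2$. I do not expect any genuine obstacle here. The only point worth flagging is that, in contrast to Lemma \ref{7.7}, no auxiliary rewriting is needed: there the coefficient $\langle1\rangle^\down=\he_2$ had to be converted to $\h_2$ via $\he_2=-\e_2$ (Lemma \ref{6.12}) and the relation $h_2+e_2=h_1^2$ together with $\h_1=0$, whereas for $U^2_\varnothing$ the equality $\langle x^2\rangle^\up=\h_2$ holds on the nose.
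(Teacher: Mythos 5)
Your proposal is correct and follows exactly the paper's argument: the paper proves Lemma \ref{7.14} by invoking the argument of Lemma \ref{7.7} (with $c^\rho_{\varnothing\tau}=\delta_{\rho\tau}$ and the coefficient $\langle a_\varnothing(x)x^2\rangle^\up=\langle x^2\rangle^\up=\h_2$), noting that the situation is even simpler since no conversion of $\he_2$ to $\h_2$ is needed. Your closing remark flags precisely this simplification, so the two proofs coincide in substance.
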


\begin{proof}
The same argument as in Lemma \ref{7.7}. The situation is even simpler
because we do not need to convert $\he_2$ to $\h_2$. \
\end{proof}

\begin{lemma}[\rm Contribution from $U^2_\si$'s with
$\ell(\si)=2$]\label{7.15}
$$
\sum_{\si:\, \ell(\si)=2}U^2_\si =\frac12\tth^2\sum_{r,s\ge2}(r-1)(s-1)
\h_{r+s-2}\frac{\pd^2}{\pd \h_r\h_s} +\text{\rm terms of degree $\le-2$}
$$
\end{lemma}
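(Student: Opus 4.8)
The plan is to follow the proof of Lemma \ref{7.8} almost verbatim, the only --- and simplifying --- difference being that the bracket $\langle\,\cdot\,\rangle^\up$ produces $\h$'s directly (since $\langle x^m\rangle^\up=\h_m$ by \eqref{tag7.11}) rather than $\he$'s, so that the $\he\to\h$ conversion step that occupied the end of Lemma \ref{7.8} becomes unnecessary. This is exactly the parallelism flagged in the paragraph preceding the lemma: because of the extra factor $x^2$, the element $\langle a_\si(x)x^2\rangle^\up$ sits in the same degree as the element $\langle b_\si(y)\rangle^\down$ of the down computation.

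First I would fix $\si$ with $\ell(\si)=2$, writing $\si=(\si_1,\si_2)$ with $\si_1\ge\si_2$; since $a_1(x)\equiv0$ we may assume $\si_1,\si_2\ge2$. As in Lemma \ref{7.10} (the analogue of Lemma \ref{7.5}), the degree estimate shows that only pairs $(\rho,\tau)$ with $\ell(\rho)=\ell(\tau)+2$ contribute to the top degree, i.e.\ $\rho=\si\sqcup\tau$ (disjoint union of parts) with $c^\rho_{\si\tau}=1$, and that within $\langle a_\si(x)x^2\rangle^\up$ only the leading monomial is relevant. By Lemma \ref{7.3}, the leading term of $a_{\si_1}(x)a_{\si_2}(x)\,x^2$ is $(\si_1-1)(\si_2-1)\tth^2 x^{\si_1+\si_2-2}$, whence
\[
\langle a_\si(x)x^2\rangle^\up=\tth^2(\si_1-1)(\si_2-1)\,\h_{\si_1+\si_2-2}+\text{lower degree}.
\]
Note that this is already expressed in the $\h$-basis, which is the promised simplification over Lemma \ref{7.8}.

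Substituting into \eqref{tag7.21} and summing over $\si$ with $\ell(\si)=2$ then gives, exactly as in Lemma \ref{7.8} but with $\he_{\rho_i+\rho_j-2}$ already replaced by $\h_{\rho_i+\rho_j-2}$,
\[
\Bigl(\sum_{\si:\,\ell(\si)=2}U^2_\si\Bigr)\h_\rho
=\tth^2\!\!\sum_{1\le i<j\le\ell(\rho)}\!\!(\rho_i-1)(\rho_j-1)\,
\h_{\rho_i+\rho_j-2}\,\h_{\rho\setminus\{\rho_i,\rho_j\}}
+\text{negligible terms}.
\]
Reading this off as a differential operator yields
\[
\tth^2\sum_{r>s\ge2}(r-1)(s-1)\h_{r+s-2}\frac{\pd^2}{\pd\h_r\pd\h_s}
+\tfrac12\tth^2\sum_{r\ge2}(r-1)^2\h_{2r-2}\frac{\pd^2}{\pd\h_r^2}
+\text{terms of degree }\le-2,
\]
and merging the two sums into a single symmetric double sum over $r,s\ge2$ produces the claimed expression $\tfrac12\tth^2\sum_{r,s\ge2}(r-1)(s-1)\h_{r+s-2}\,\pd^2/\pd\h_r\pd\h_s$. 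I do not expect a genuine obstacle here: the only point requiring care is the degree bookkeeping that discards both the $\ell(\rho)<\ell(\tau)+2$ contributions and the subleading part of $a_\si(x)x^2$, and this is precisely the estimate already established in Lemma \ref{7.10}, so no new analytic input is needed --- the whole lemma is a transcription of Lemma \ref{7.8} with the final $\he\to\h$ substitution deleted.
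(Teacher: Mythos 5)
Your proposal is correct and matches the paper's proof, which simply says the argument is the same as in Lemma \ref{7.8} with Lemma \ref{7.10} playing the role of Lemma \ref{7.5} and with the $\he\to\h$ conversion step omitted. Your transcription of that argument — including the leading-term computation $\langle a_\si(x)x^2\rangle^\up=\tth^2(\si_1-1)(\si_2-1)\h_{\si_1+\si_2-2}+\text{lower degree}$ and the symmetrization of the double sum — is exactly what the paper intends.
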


\begin{proof} The argument is exactly the same as in the proof of Lemma \ref{7.8}.
Instead of Lemma \ref{7.5} we refer to its analog, Lemma \ref{7.10}.  Again, we
do not need to convert $\he_r$ to $\h_r$, which slightly shortens the proof.
\end{proof}

\begin{lemma}[\rm Contribution from $U^2_\si$'s with
$\ell(\si)=1$]\label{7.16}
\begin{gather*}
\sum_{\si:\, \ell(\si)=1}U^2_\si=
\tth\sum_{r\ge2}(r-1)\h_r\frac{\pd}{\pd\h_r}\\
+\frac12\tth(1-\tth)\sum_{r\ge3}(r-1)(r-2)\h_{r-1}\frac{\pd}{\pd\h_r}\\
+\frac12\tth\sum_{r,s\ge2}(r+s-2)\h_r\h_s\frac{\pd}{\pd\h_{r+s}} +\text{\rm
terms of degree $\le-2$}.
\end{gather*}
\end{lemma}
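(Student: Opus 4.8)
The plan is to mimic the proof of Lemma \ref{7.9} essentially line by line, exploiting the parallelism flagged at the start of the subsection: since $\langle x^m\rangle^\up=\h_m$ carries the same filtration degree $m-1$ as $\langle y^m\rangle^\down=\he_{m+2}$, multiplying $a_\si(x)$ by the extra factor $x^2$ puts $\langle a_\si(x)x^2\rangle^\up$ in exactly the same degree as the element $\langle b_\si(y)\rangle^\down$ studied in Subsection \ref{7-4}. By Corollary \ref{7.11}(ii) only $\si$ with $\ell(\si)=1$ contribute nonnegligibly here, so I fix $\si=(s)$ with $s\ge2$ and, following the degree bookkeeping of Lemma \ref{7.10}, split the sum over the pairs $(\rho,\tau)$ with $c^\rho_{\si\tau}\ne0$ in \eqref{tag7.21} into the two cases $\ell(\tau)=\ell(\rho)-1$ and $\ell(\tau)=\ell(\rho)$.

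First I would compute $\langle a_s(x)x^2\rangle^\up$ via Lemma \ref{7.3}. Since $a_s(x)=(s-1)\tth\,x^{s-2}+\tfrac12(s-1)(s-2)\tth(1-\tth)\,x^{s-3}+\dots$, multiplying by $x^2$ and applying the linear map $\langle x^m\rangle^\up=\h_m$ gives $\langle a_s(x)x^2\rangle^\up=(s-1)\tth\,\h_s+\tfrac12(s-1)(s-2)\tth(1-\tth)\,\h_{s-1}+\dots$, with the omitted terms of lower degree. The one genuine simplification over Lemma \ref{7.9} is that this expression is \emph{already} written in the $\h$-generators, so no conversion of $\he$'s to $\h$'s is needed; consequently the cross term $\tth\sum(r+s-1)\h_r\h_s\,\pd/\pd\h_{r+s}$ that arose in the $D$-case from precisely that conversion is absent here, and the sign of the leading coefficient is $+(s-1)\tth$ rather than $-(s-1)\tth$.

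Next I would read off the two contributions. In the case $\ell(\tau)=\ell(\rho)-1$ one has $\si=(\rho_i)$, $\tau=\rho\setminus\{\rho_i\}$, and $c^\rho_{\si\tau}=1$; both retained terms of $\langle a_s(x)x^2\rangle^\up$ are relevant and, by \eqref{tag7.21}, produce the single-derivative operators $\tth\sum_{r\ge2}(r-1)\h_r\,\pd/\pd\h_r$ and $\tfrac12\tth(1-\tth)\sum_{r\ge3}(r-1)(r-2)\h_{r-1}\,\pd/\pd\h_r$ (the restriction $r\ge3$ in the second sum coming from $\h_1=0$). In the case $\ell(\tau)=\ell(\rho)$, $\tau$ is obtained by subtracting $s$ from a part $\rho_i\ge s+2$, the constant $c^\rho_{\si\tau}$ equals the multiplicity of that part, and only the top term $(s-1)\tth\,\h_s$ matters, yielding $\tth\sum(s-1)\h_s\h_{r-s}\,\pd/\pd\h_r$ with $r\ge4$ and $2\le s\le r-2$.

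Finally I would reindex the last sum by $p=s$, $q=r-s$ to get $\tth\sum_{p,q\ge2}(p-1)\h_p\h_q\,\pd/\pd\h_{p+q}$ and then symmetrize in $p,q$ (legitimate since $\h_p\h_q\,\pd/\pd\h_{p+q}$ is symmetric), obtaining $\tfrac12\tth\sum_{r,s\ge2}(r+s-2)\h_r\h_s\,\pd/\pd\h_{r+s}$. Summing the three pieces gives the asserted formula. I expect the only delicate point to be the two-case bookkeeping — correctly isolating which $(\rho,\tau)$ survive at top degree, and the reindexing/symmetrization of the double sum — but since the degree estimates are already furnished by Lemma \ref{7.10} and Corollary \ref{7.11}, this is routine rather than a real obstacle; the substantive simplification, worth stating explicitly, is precisely that the $U^2$ computation needs no $\he\to\h$ conversion.
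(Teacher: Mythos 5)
Correct, and it is essentially the paper's own approach: the paper's entire proof of this lemma is the single line ``We argue as in the proof of Lemma \ref{7.9},'' and your write-up is a faithful execution of that argument --- the two-case split on $\ell(\tau)$ versus $\ell(\rho)$, retaining two terms of $\langle a_s(x)x^2\rangle^\up$ in the part-removal case and only the top term in the part-decrease case, then reindexing and symmetrizing the double sum --- including the genuine simplification that no $\he\to\h$ conversion is needed, which is exactly why the cross term here carries $(r+s-2)$ instead of the $(r+s)$ of Lemma \ref{7.9} and the first-order term changes sign. One slip in your opening sentence: $\deg\langle x^m\rangle^\up=m-1$ while $\deg\langle y^m\rangle^\down=m+1$, so these degrees are \emph{not} equal (the paper points out this mismatch at the start of Subsection \ref{7-5}); it is precisely this difference of $2$ that the extra factor $x^2$ compensates, which is the correct conclusion you actually rely on, so nothing downstream is affected.
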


\begin{proof} We argue as in the proof of Lemma \ref{7.9}.
\end{proof}

Lemmas \ref{7.14}, \ref{7.15}, and \ref{7.16} together give
\begin{gather*}
U^2 =\h_2+\frac12\tth^2\sum_{r,s\ge2}(r-1)(s-1)
\h_{r+s-2}\frac{\pd^2}{\pd \h_r\pd\h_s}\\
+\tth\sum_{r\ge2}(r-1)\h_r\frac{\pd}{\pd\h_r}\\
+\frac12\tth(1-\tth)\sum_{r\ge3}(r-1)(r-2)\h_{r-1}\frac{\pd}{\pd\h_r}\\
+\frac12\tth\sum_{r,s\ge2}(r+s-2)\h_r\h_s\frac{\pd}{\pd\h_{r+s}}\\
+\text{terms of degree $\le-2$}.
\end{gather*}
Adding this with the expressions obtained in Lemmas \ref{7.12} and \ref{7.13}
we finally get the result indicated in Claim {\rm(ii)} of Theorem \ref{7.1}.

This completes the proof of Theorem \ref{7.1}.

\section{Computation of $T_n-1$}\label{8}

As in Section \ref{7}, here we are dealing with a fixed triple $(\tth,z,z')$ of
parameters. As usual, we assume $\tth>0$. As for the couple $(z,z')$, we now
assume that it belongs to the principal or complementary series (Definition
\ref{5.4}), so that $p^\up_{\tth,z,z'}$ is a true system of transition
probabilities.

\begin{definition}[\rm The operator $T_n$]\label{8.1}
Recall that in Definition \ref{2.3} we have introduced the up-down Markov
chains associated with arbitrary systems $p^\up$ and $p^\down$ of up and down
transition probabilities. Now we take the concrete systems
$p^\up=p^\up_{\tth,z,z'}$, $p^\down=p^\down_\tth$ and denote by $T_n$ the
transition operator of the corresponding up-down chain of level $n$,
$n=1,2,\dots$\,. We regard $T_n$ as an operator in the space $\Fun(\Y_n)$ of
functions on $\Y_n$:
$$
T_nF(\la)=\sum_{\nu,\kappa}p^\up_{\tth,z,z'}(\la,\nu)p^\down_\tth(\nu,\kappa)F(\kappa),
\qquad F\in\Fun(\Y_n),
$$
summed over all $\nu\in\Y_{n+1}$ and $\kappa\in\Y_n$ such that
$\la\nearrow\nu\searrow\kappa$.
\end{definition}

In the notation introduced in the beginning of Section \ref{7},
$$
T_n=U_{n,n+1}D_{n+1,n}\,.
$$
Set
\begin{equation}\label{tag8.1}
\epsi_n=\frac1{(\tth^{-1}zz'+n)(n+1)}\,, \qquad n=1,2,\dots\,.
\end{equation}
Equivalently,
$$
\epsi_n^{-1}=\frac{(zz'+\tth n)(\tth(n+1))}{\tth^2}\,.
$$
Clearly, $\epsi_n\sim n^{-2}$ (in Theorem \ref{9.6} below we simply take
$\epsi_n=n^{-2}$). Recall that given a function $F\in\A_\tth$, we denote by
$F_n\in\Fun(\Y_n)$ the restriction of $F$ to the finite subset $\Y_n$.

\begin{theorem}\label{8.2} There exists a unique operator $\wt B:\A_\tth\to\A_\tth$
such that for any $F\in\A_\tth$ and each $n=1,2,\dots$
$$
\epsi^{-1}_n(T_n-1)F_n=(\wt B F)_n\,.
$$
The operator $\wt B$ has degree $0$ and its top degree component looks as
follows
\begin{multline}\label{tag8.2}
\wt B= \sum_{r,s\ge3}(r-1)(s-1)
(\h_2\h_{r+s-2}-\h_r\h_s)\frac{\pd^2}{\pd \h_r\pd\h_s}\\
+\sum_{r\ge3}\big[(\tth^{-1}-1)(r-1)(r-2)\h_2\h_{r-1}+\tth^{-1}(z+z')(r-1)\h_2\h_{r-1}\\
-(r-1)(r-2)\h_r-\tth^{-1}zz'(r-1)\h_r\big]\frac{\pd}{\pd\h_r}\\
+\tth^{-1}\sum_{r,s\ge2}(r+s-1)\h_2\h_r\h_s\frac{\pd}{\pd\h_{r+s}} +\text{\rm
terms of degree $<0$}.
\end{multline}
\end{theorem}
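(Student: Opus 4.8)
The plan is to obtain $\wt B$ by composing the two formulas of Theorem~\ref{7.1} and then recognizing the resulting $n$-dependent scalar as the restriction to $\Y_n$ of a fixed element of $\A_\tth$. Since $T_n=U_{n,n+1}D_{n+1,n}$, applying first the $D$-formula and then the $U$-formula of Theorem~\ref{7.1} gives, for any $F\in\A_\tth$,
$$T_nF_n=\frac1{\tth(n+1)(zz'+\tth n)}(UDF)_n=\frac{\epsi_n}{\tth^2}(UDF)_n.$$
The key point is that the scalar $\epsi_n^{-1}=(n+1)(\tth^{-1}zz'+n)$ coincides, on the whole level $\Y_n$, with the restriction of a genuine element of $\A_\tth$: indeed $\h_2(\la)=\tth|\la|$ (Lemma~\ref{6.11} at $m=2$ together with the relation $\sum_i x_i^2\pi^\up_i=\tth n$ from the proof of Proposition~\ref{5.1}), so that on $\Y_n$ one may substitute $n=\tth^{-1}\h_2$ and write $\epsi_n^{-1}F_n=\big(\tth^{-2}(\h_2+\tth)(zz'+\h_2)F\big)_n$. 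Hence $\epsi_n^{-1}(T_n-1)F_n=(\wt BF)_n$ with the single $n$-independent operator
$$\wt B=\tth^{-2}\big(UD-(\h_2+\tth)(zz'+\h_2)\big),$$
multiplication by $(\h_2+\tth)(zz'+\h_2)$ being understood. Uniqueness is immediate, since the restrictions $F_n$ fill up $\Fun(\Y_n)$ and an element of $\A_\tth$ vanishing on every $\Y_n$ is $0$.

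It remains to compute the top component of $\wt B$. Grade the differential operators on $\R[\h_2,\h_3,\dots]$ by assigning to a monomial $\prod_i\h_{a_i}\prod_j\pd/\pd\h_{b_j}$ the integer $\sum_i(a_i-1)-\sum_j(b_j-1)$; this grading is preserved under composition. Decomposing $U=U^{[1]}+U^{[0]}+U^{[-1]}+\cdots$ and $D=D^{[1]}+D^{[0]}+D^{[-1]}+\cdots$, Theorem~\ref{7.1} gives $U^{[1]}=D^{[1]}=M$ (multiplication by $\h_2$), $D^{[0]}=-\tth E$ and $U^{[0]}=zz'+\tth E$, where $E=\sum_{r\ge2}(r-1)\h_r\,\pd/\pd\h_r$, along with the explicit $U^{[-1]}$ and $D^{[-1]}$. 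The degree-$2$ part of $UD$ is $M^{2}$, i.e.\ multiplication by $\h_2^2$, which cancels the top term of $(\h_2+\tth)(zz'+\h_2)$. For the degree-$1$ part one finds $MD^{[0]}+U^{[0]}M=zz'M+\tth(EM-ME)$; since $E$ is a derivation with $E\h_2=\h_2$ one has $[E,M]=M$, so this equals $(zz'+\tth)M$, which is exactly the degree-$1$ part $(zz'+\tth)\h_2$ of $(\h_2+\tth)(zz'+\h_2)$. Thus both the degree-$2$ and degree-$1$ contributions cancel and $\wt B$ has degree $0$, as claimed.

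Finally, the degree-$0$ component of $\wt B$ equals $\tth^{-2}$ times
$$MD^{[-1]}+U^{[0]}D^{[0]}+U^{[-1]}M-\tth zz',\qquad U^{[0]}D^{[0]}=-\tth zz'E-\tth^2E^2.$$
I would substitute the explicit $U^{[-1]},D^{[-1]}$ and expand. The second-order terms combine cleanly: $MD^{[-1]}$ and the principal part of $U^{[-1]}M$ each contribute $\tfrac12\tth^2\sum(r-1)(s-1)\h_2\h_{r+s-2}\,\pd^2/\pd\h_r\pd\h_s$, while the second-order part of $-\tth^2E^2$ contributes $-\tth^2\sum(r-1)(s-1)\h_r\h_s\,\pd^2/\pd\h_r\pd\h_s$; after division by $\tth^2$ these reproduce the $(r-1)(s-1)(\h_2\h_{r+s-2}-\h_r\h_s)\,\pd^2/\pd\h_r\pd\h_s$ of \eqref{tag8.2} (the terms with $r=2$ or $s=2$ cancelling against each other). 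The first-order terms and the triple term $\tth^{-1}(r+s-1)\h_2\h_r\h_s\,\pd/\pd\h_{r+s}$ then assemble from the first-order parts of $D^{[-1]}$ and $U^{[-1]}$, from $-\tth zz'E$, from the first-order remainder $-\tth^2\sum(r-1)^2\h_r\,\pd/\pd\h_r$ of $-\tth^2E^2$, and---most delicately---from the Leibniz corrections in $U^{[-1]}M=U^{[-1]}(\h_2\,\cdot\,)$, where a derivative of $U^{[-1]}$ falls on the prefactor $\h_2$ (for instance $\tth zz'\,\pd/\pd\h_2$ acting on $\h_2$ produces the constant $\tth zz'$ cancelling the subtracted $-\tth zz'$). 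The conceptual heart is the substitution $n=\tth^{-1}\h_2$ of the first paragraph, which alone turns $\wt B$ into an honest operator on $\A_\tth$; the main labor, and the only place demanding real care, is this last bookkeeping of the degree-$0$ terms, in particular the Leibniz corrections in $U^{[-1]}(\h_2\,\cdot\,)$ and the replacement of any residual $\he_m$ by $\h_m$ modulo lower degree.
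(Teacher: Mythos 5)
Your proposal is correct and follows essentially the same route as the paper's own proof: the identity $\wt B=\tth^{-2}\bigl(UD-(\h_2+\tth)(zz'+\h_2)\bigr)$ via the substitution $\h_2=\tth n$ on $\Y_n$, the graded decomposition $U=M+U^{[0]}+U^{[-1]}+\cdots$, $D=M+D^{[0]}+D^{[-1]}+\cdots$, the commutator cancellations in degrees $2$ and $1$, and the assembly of the degree-$0$ part from $MD^{[-1]}+U^{[-1]}M+U^{[0]}D^{[0]}-\tth zz'$ (including the Leibniz correction $[U^{[-1]},\h_2]$ that produces $\tth zz'$ and the cancellation of the $\pd/\pd\h_2$ terms). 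The only difference is presentational: the paper carries out the final degree-$0$ expansion in full, while you sketch it, but your sketch identifies all the contributions correctly and matches the paper's computation.
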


\begin{proof} The uniqueness claim is obvious. The existence of $\wt B$ follows
from Theorem \ref{7.1}. Indeed, by virtue of \eqref{tag6.12} and Proposition
\ref{5.1}, $\h_2(\la)=\tth n$. Using this and expressing $U_{n,n+1}$ and
$D_{n+1,n}$ through $U$ and $D$, as indicated in Theorem \ref{7.1}, we get
\begin{gather*}
(T_n-1)F_n=U_{n,n+1}D_{n+1,n}F_n-F_n =\frac{(UDF)_n-(zz'+\tth
n)(\tth(n+1))F_n}{(zz'+\tth n)(\tth(n+1))}
\\ =\frac{((UD-(\h_2+zz')(\h_2+\tth))F)_n}{(zz'+\tth n)(\tth(n+1))}\,.
\end{gather*}
Therefore,
\begin{equation}\label{tag8.3}
\wt B=\tth^{-2}(UD-(\h_2+zz')(\h_2+\tth)).
\end{equation}

To compute $UD-(\h_2+zz')(\h_2+\tth)$, within negligible terms, we write
$$
U=\h_2+U_0+U_{-1}+\dots, \qquad D=\h_2+D_0+D_{-1}+\dots,
$$
where $U_0$ and $D_0$ are the terms of degree 0 and $U_{-1}$ and
$D_{-1}$ are the terms of degree $-1$.

Note that $\h_2$ (more precisely, the operator of multiplication by $\h_2$) has
degree 1. Let us check that the operator $UD-(\h_2+zz')(\h_2+\tth)$, which
could have degree 2, is actually of degree 0, due to cancelation of the terms
of degree 2 and 1.

Indeed, the degree 2 terms in $UD-(\h_2+zz')(\h_2+\tth)$ are
$$
\h_2^2-\h_2^2=0.
$$

The degree 1 terms in $UD-(\h_2+zz')(\h_2+\tth)$ are
$$
\h_2D_0+U_0\h_2-(zz'+\tth)\h_2\,.
$$
Substitute here the explicit expressions for $D_0$ and $U_0$ taken from Theorem
\ref{7.1},
\begin{gather*} D_0=-\tth\sum_{r\ge2}(r-1)\h_r\frac{\pd}{\pd \h_r}\\
U_0=zz'+\tth\sum_{r\ge2}(r-1)\h_r\frac{\pd}{\pd \h_r}\,.
\end{gather*}
Since
$$
U_0\h_2=\h_2 U_0+[U_0,\h_2]=\h_2 U_0+\tth\h_2,
$$
all the terms of degree 1 are indeed cancelled out.

Now, let us examine the degree 0 terms in $UD-(\h_2+zz')(\h_2+\tth)$. These are
$$
\h_2D_{-1}+U_{-1}\h_2+U_0D_0-\tth zz'.
$$
To compute $\h_2D_{-1}+U_{-1}\h_2$ we substitute the explicit expressions for
$D_{-1}$ and $U_{-1}$ taken from Theorem \ref{7.1},
\begin{gather*}
D_{-1}=\frac12\tth^2\sum_{r,s\ge2}(r-1)(s-1)
\h_{r+s-2}\frac{\pd^2}{\pd \h_r\pd\h_s}\\
+\frac12\tth(1-\tth)\sum_{r\ge3}(r-1)(r-2)\h_{r-1}\frac{\pd}{\pd\h_r}\\
+\frac12\tth\sum_{r,s\ge2}(r+s)\h_r\h_s\frac{\pd}{\pd\h_{r+s}}
\end{gather*}
and
\begin{gather*}U_{-1}=\tth zz'\frac{\pd}{\pd\h_2}
+\tth(z+z')\sum_{r\ge3}(r-1)\h_{r-1}\frac{\pd}{\pd\h_r}\\
+\frac12\tth^2\sum_{r,s\ge2}(r-1)(s-1)
\h_{r+s-2}\frac{\pd^2}{\pd \h_r\pd\h_s}\\
+\frac12\tth(1-\tth)\sum_{r\ge3}(r-1)(r-2)\h_{r-1}\frac{\pd}{\pd\h_r}\\
+\frac12\tth\sum_{r,s\ge2}(r+s-2)\h_r\h_s\frac{\pd}{\pd\h_{r+s}}\,.
\end{gather*}

Note that
$$
[U_{-1},\h_2]=\tth zz'+\tth^2\sum_{r\ge2}(r-1)\h_r\frac{\pd}{\pd \h_r}\,.
$$
Therefore,
\begin{gather*} \h_2D_{-1}+U_{-1}\h_2=\h_2D_{-1}+\h_2 U_{-1}+[U_{-1},\h_2]\\
=\tth^2\sum_{r,s\ge2}(r-1)(s-1)
\h_2\h_{r+s-2}\frac{\pd^2}{\pd \h_r\pd\h_s}\\
+\tth zz'\h_2\frac{\pd}{\pd\h_2}\\
+\tth(z+z')\sum_{r\ge3}(r-1)\h_2\h_{r-1}\frac{\pd}{\pd\h_r}\\
+\tth(1-\tth)\sum_{r\ge3}(r-1)(r-2)\h_2\h_{r-1}\frac{\pd}{\pd\h_r}\\
+\tth\sum_{r,s\ge2}(r+s-1)\h_2\h_r\h_s\frac{\pd}{\pd\h_{r+s}}\\
+\tth^2\sum_{r\ge2}(r-1)\h_r\frac{\pd}{\pd\h_r}+\tth zz'.
\end{gather*}

Next, using the above expressions for $U_0$ and $D_0$ we get
\begin{gather*}
U_0D_0-\tth zz'=-\tth^2\left(\sum_{r\ge2}(r-1)\h_r\frac{\pd}{\pd\h_r}\right)^2
-\tth zz'
\sum_{r\ge2}(r-1)\h_r\frac{\pd}{\pd\h_r} -\tth zz'\\
=-\tth^2\sum_{r,s\ge2}(r-1)(s-1)\h_r\h_s\frac{\pd^2}{\pd\h_r\pd\h_s}
-\tth^2\sum_{r\ge2}(r-1)^2\h_r\frac{\pd}{\pd\h_r}\\
-\tth zz' \sum_{r\ge2}(r-1)\h_r\frac{\pd}{\pd\h_r} -\tth zz'.
\end{gather*}

Adding together the above two expressions we see that the terms involving
$\pd/\pd\h_2$ cancel out, which plays the crucial role in Corollary \ref{8.7}
below. Then we divide by $\tth^2$ in accordance with \eqref{tag8.3} and finally
get \eqref{tag8.2}.
\end{proof}

{}From the definition of the filtration in $\A_\tth$ (Definition \ref{6.2}) we
see that there is a natural isomorphism between the associated graded algebra
$$
\gr\A_\tth=\bigoplus_{m=0}^\infty(\A_\tth^{(m)}/\A_\tth^{(m-1)})
$$
and the algebra $\La=\R[p_1,p_2,\dots]$ of symmetric functions: the top degree
terms of the generators $p^*_m\in\A_\tth$ are identified with the homogeneous
generators $p_m\in\La$. This isomorphism $\gr\A_\tth\simeq\La$ should not be
confused with the covering homomorphism $\La\to\A_\tth$.

\begin{definition}[\rm The operator $B$]\label{8.3}
 Observe that the operator $\wt B:\A_\tth\to\A_\tth$ (Theorem
\ref{8.2}) has degree $0$. Therefore, it gives rise to an operator in the
associated graded algebra. Using the identification $\gr\A_\tth=\La$, we denote
the latter operator as $B:\La\to\La$. Note that $B$ is homogeneous of degree
$0$
\end{definition}

\begin{theorem}\label{8.4}
The operator $B:\La\to\La$ just defined has the following form
\begin{multline}\label{tag8.4}
B= \sum_{k,l\ge2}kl
(p_1 p_{k+l-1}-p_kp_l)\frac{\pd^2}{\pd p_k \pd p_l}\\
+\sum_{k\ge2}\big[(1-\tth)k(k-1)p_1 p_{k-1}+(z+z')k p_1 p_{k-1}
-k(k-1)p_k-k\tth^{-1}zz'p_k\big]\frac{\pd}{\pd p_k}\\
+\tth\sum_{k,l\ge1}(k+l+1)p_1 p_k p_l\frac{\pd}{\pd p_{k+l+1}}\,.
\end{multline}
\end{theorem}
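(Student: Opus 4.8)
The plan is to deduce Theorem \ref{8.4} from the explicit degree-$0$ component of $\wt B$ already computed in Theorem \ref{8.2}, by transporting \eqref{tag8.2} along the isomorphism $\gr\A_\tth\simeq\La$. Since $\wt B$ has filtration degree $0$, the induced operator $B=\gr\wt B$ depends only on the top degree component \eqref{tag8.2}: the ``terms of degree $<0$'' are annihilated in the associated graded algebra. First I would note that every monomial differential operator occurring in \eqref{tag8.2} is genuinely of filtration degree $0$ (for instance $\h_2\h_{r+s-2}\,\pd^2/\pd\h_r\pd\h_s$ raises degree by $(r+s-2)$ through its coefficient and lowers it by $(r-1)+(s-1)$ through the two derivatives), so each such term survives in $\gr$ and contributes to $B$.

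The essential preliminary step is to make the isomorphism $\gr\A_\tth\simeq\La$ explicit on the generators $\h_r$ appearing in \eqref{tag8.2}. From \eqref{tag6.5} we have $\HH(u;\la)=\exp\bigl(\sum_{m\ge2}\tfrac{\p_m}{m}u^{-m}\bigr)$, which is exactly the generating relation expressing complete homogeneous functions through power sums; hence $\h_m=\tfrac1m\p_m+(\text{products of the }\p\text{'s})$, and the products have strictly smaller filtration degree. Thus the top degree term of $\h_m$ is $\tfrac1m\p_m$, whose top degree term, by Proposition \ref{6.5}, equals $\tfrac1m\cdot\tth m\,p^*_{m-1}=\tth\,p^*_{m-1}$. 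Under the identification $\gr\A_\tth\simeq\La$ (which sends the top degree term of $p^*_k$ to $p_k$), this yields
$$
[\h_m]=\tth\,p_{m-1},\qquad m\ge2,
$$
and, dually, the derivation $\pd/\pd\h_m$ descends to $\tth^{-1}\pd/\pd p_{m-1}$.

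With this correspondence in hand, the computation of $B$ reduces to substituting $\h_r\mapsto\tth p_{r-1}$ and $\pd/\pd\h_r\mapsto\tth^{-1}\pd/\pd p_{r-1}$ into the degree-$0$ operator \eqref{tag8.2} and reindexing by $k=r-1$, $l=s-1$. I would then track the powers of $\tth$ group by group. In the second-order part both $\h_2\h_{r+s-2}$ and $\h_r\h_s$ carry a factor $\tth^2$ which cancels against the $\tth^{-2}$ from the two derivatives, giving $\sum_{k,l\ge2}kl(p_1p_{k+l-1}-p_kp_l)\,\pd^2/\pd p_k\pd p_l$. In the first-order part the four bracketed terms produce the prefactors $(\tth^{-1}-1)\tth=1-\tth$, then $\tth^{-1}\tth=1$ (carrying $z+z'$), then $1$, and finally $\tth^{-1}$ (carrying $zz'$), reproducing the bracket in \eqref{tag8.4}. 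In the raising part the count $\tth^{-1}\cdot\tth^3\cdot\tth^{-1}=\tth$ yields $\tth\sum_{k,l\ge1}(k+l+1)p_1p_kp_l\,\pd/\pd p_{k+l+1}$. The index ranges $r,s\ge3\leftrightarrow k,l\ge2$ and $r,s\ge2\leftrightarrow k,l\ge1$ then match \eqref{tag8.4} exactly.

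The only genuinely nontrivial point is the second step: pinning down the scaling $[\h_m]=\tth\,p_{m-1}$, including both the factor $\tth$ and the index shift $m\mapsto m-1$. This is precisely where the distinction between the two maps relating $\La$ and $\A_\tth$ (the covering homomorphism versus the isomorphism $\gr\A_\tth\simeq\La$) must be handled with care, and it rests squarely on Proposition \ref{6.5} together with the $h$--$p$ generating relation \eqref{tag6.5}. Once this correspondence is fixed, the remainder is routine bookkeeping of powers of $\tth$ and reindexing, with no conceptual obstacle.
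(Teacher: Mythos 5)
Your proof is correct and takes essentially the same route as the paper: both rest on identifying the top-degree term of $\h_m$ as $\tth p^*_{m-1}$ (via the exponential relation \eqref{tag6.5} together with Proposition \ref{6.5}), rescaling the derivations accordingly, and then substituting into \eqref{tag8.2} with the index shift $k=r-1$. The only difference is organizational: you pass to the associated graded algebra first, so the change of derivations becomes a trivial rescaling, whereas the paper changes generators at the filtered level (which is exactly what its Lemma \ref{8.5} is for) and passes to $\gr$ at the end.
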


Before proving the theorem let us state a lemma.

\begin{lemma}\label{8.5} Consider the algebra of polynomials in countably many
generators $\R[x_1,x_2,\dots]$ with the filtration determined by setting\/
$\deg x_k=k$, and let $y_1,y_2,\dots$ be another sequence of elements of the
same algebra such that
$$
x_k=y_k+\text{\rm terms of degree $<k$}, \qquad k=1,2,\dots,
$$
so that $\{y_1,y_2,\dots\}$ is also a system of generators.

Then we have
$$
\frac{\pd}{\pd x_k}=\frac{\pd}{\pd y_k}+\text{\rm terms of degree $<-k$},
\qquad k=1,2,\dots\,.
$$
\end{lemma}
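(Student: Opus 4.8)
The plan is to treat $\pd/\pd x_k$ and $\pd/\pd y_k$ as derivations of the polynomial algebra and to move between the two systems of generators by the chain rule, exploiting the triangular relationship between the $x$'s and the $y$'s. First I would record that relationship precisely. Since $x_k=y_k+(\text{degree}<k)$ and both families generate, I can solve for $y_k$ and write $y_k=x_k-c_k$ with $c_k:=x_k-y_k$ of degree $<k$. The key structural observation is that any monomial $\prod_i x_i^{a_i}$ of degree $\sum_i i\,a_i<k$ can involve only $x_1,\dots,x_{k-1}$, so in fact $c_k\in\R[x_1,\dots,x_{k-1}]$. This is the triangularity that drives everything.

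Next I would apply the chain rule in the form valid for an arbitrary derivation: a derivation is determined by its values on a system of generators, so $\pd/\pd x_k=\sum_j(\pd y_j/\pd x_k)\,\pd/\pd y_j$, a locally finite sum (when applied to any fixed polynomial only finitely many $j$ contribute). I would then evaluate the coefficients $\pd y_j/\pd x_k=\pd(x_j-c_j)/\pd x_k$ case by case. For $j\le k$ one has $\pd c_j/\pd x_k=0$, since $c_j$ involves only $x_1,\dots,x_{j-1}$ and $x_k$ is not among them; combined with $\pd x_j/\pd x_k=\delta_{jk}$, this gives coefficient $0$ when $j<k$ and coefficient $1$ when $j=k$. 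For $j>k$ the coefficient equals $-\pd c_j/\pd x_k$.

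The degree bookkeeping is the heart of the matter, and I would arrange it so the estimate falls out cleanly. Recall that $\pd/\pd y_j$ has degree $-j$ (it lowers degree by the degree of $y_j$), and that the $x$- and $y$-filtrations coincide, so degrees are unambiguous. The vanishing of the coefficients for $j<k$ is genuinely needed, not merely a lower-degree bound: a nonzero coefficient there, even a constant, would contribute a term of degree $-j>-k$. For $j>k$ the coefficient $-\pd c_j/\pd x_k$ has degree $<j-k$, because $\deg c_j<j$ and differentiation by $x_k$ drops degree by $k$; hence the operator $(\pd y_j/\pd x_k)\,\pd/\pd y_j$ has degree $<(j-k)-j=-k$. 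Summing, I obtain $\pd/\pd x_k=\pd/\pd y_k+\sum_{j>k}(\text{degree}<-k)$, which is exactly the claim.

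I do not expect a serious obstacle: this is in essence a triangular change-of-variables statement, and the proof is short. The only points demanding care are getting the chain rule in the correct direction (expressing $\pd/\pd x_k$ in the $y$-basis of derivations rather than the reverse) and holding firmly to the convention, inherited from the discussion before Theorem \ref{7.1}, that $\pd/\pd y_j$ carries degree $-j$. I would also note explicitly that the $j<k$ coefficients must \emph{vanish}, so the argument relies on the sharp containment $c_j\in\R[x_1,\dots,x_{j-1}]$ and not merely on a degree count.
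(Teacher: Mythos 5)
Your proof is correct and takes essentially the same route as the paper's: the chain rule $\frac{\pd}{\pd x_k}=\sum_j\frac{\pd y_j}{\pd x_k}\,\frac{\pd}{\pd y_j}$, the triangularity observation that $x_j-y_j$ lies in $\R[x_1,\dots,x_{j-1}]$ (so the coefficients with $j<k$ vanish and the $j=k$ coefficient is $1$), and the degree count $(j-k)-j<-k$ for the $j>k$ terms. The paper's version, written with $R_l=y_l-x_l$, is merely a more compressed rendering of the same argument, passing to the sum over $l\ge k$ without spelling out why the lower coefficients vanish.
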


\begin{proof}[Proof of Lemma \ref{8.5}] Indeed,
$$
\frac{\pd}{\pd x_k}=\sum_{l\ge1} \frac{\pd y_l}{\pd x_k}\,\frac{\pd}{\pd
y_l}\,.
$$
Since
$$
y_l=x_l+R_l, \qquad \text{\rm where $\deg R_l<l$},
$$
we have
$$
\frac{\pd}{\pd x_k}=\sum_{l\ge k}\frac{\pd y_l}{\pd x_k}\,\frac{\pd}{\pd
y_l}=\frac{\pd}{\pd y_k}+\sum_{l>k}\frac{\pd R_l}{\pd x_k}\,\frac{\pd}{\pd
y_l}\,.
$$
The degree of the $l$th summand in the last sum is strictly less than
$(l-k)-l=-k$, so that all these summands are negligible.
\end{proof}

\begin{proof}[Proof of Theorem \ref{8.4}] By virtue of \eqref{tag6.3} and \eqref{tag6.5},
and because of $\h_1=\p_1=0$,
$$
1+\sum_{k\ge1}\h_{k+1}u^{-k-1}
=\exp\left(\sum_{k\ge1}\frac{\p_{k+1}}{k+1}\,u^{-k-1}\right).
$$
It follows that
$$
\h_{k+1}=\frac{\p_{k+1}}{k+1}\,+\text{terms of degree $<k$}, \qquad
k=1,2,\dots\,.
$$
Next, by \eqref{tag6.8}
$$
\frac{\p_{k+1}}{k+1}=\tth p^*_k+\text{terms of degree $<k$}, \qquad
k=1,2,\dots\,.
$$
Therefore,
\begin{equation}\label{tag8.5}
\h_{k+1}=\tth p^*_k+\text{terms of degree $<k$}, \qquad k=1,2,\dots\,.
\end{equation}

It follows that we may apply Lemma \ref{8.5} to the generators $x_k=\h_{k+1}$
and $y_k=\tth p^*_k$, $k=1,2,\dots$\,. This gives us
\begin{equation}\label{tag8.6}
\frac{\pd}{\pd \h_{k+1}}=\frac1\tth\,\frac{\pd}{\pd p^*_k}+\text{terms of
degree $<-k$}, \qquad k=1,2,\dots\,.
\end{equation}

Substituting \eqref{tag8.5} and \eqref{tag8.6} into \eqref{tag8.2} we get a
similar expression for the operator $\wt B$ in terms of generators
$p^*_1,p^*_2,\dots$ and the corresponding partial derivatives.

Finally, it is readily seen that to get $B$ it suffices to replace $p^*_k$ with
$p_k$. This leads to \eqref{tag8.4}.
\end{proof}

\begin{definition}[\rm The quotient algebra $\La^\circ$]\label{8.6}
Consider the principal ideal $(p_1-1)\La$ in the algebra $\La$ generated by the
element $p_1-1$ and let $\La^\circ=\La/(p_1-1)\La$ denote the corresponding
quotient algebra. Because the ideal is not homogeneous, there is no natural
graduation in $\La^\circ$, but $\La^\circ$ inherits the filtration of $\La$.
Given $\varphi\in\La$, we denote by $\varphi^\circ$ the image of $\varphi$
under the canonical projection $\La\to\La^\circ$. Due to the natural
isomorphism of $\La^\circ$ with the polynomial algebra
$\R[p^\circ_2,p^\circ_3,\dots]$ we may introduce in $\La^\circ$ the
differential operators $\pd/\pd p^\circ_k$, $k\ge2$.
\end{definition}

\begin{corollary}\label{8.7}
The operator $B:\La\to\La$ introduced in Definition \ref{8.3} and computed in
Theorem \ref{8.4} preserves the principal ideal $(p_1-1)\La\subset\La$ and
hence gives rise to an operator $A:\La^\circ\to\La^\circ$. We have
\begin{multline} \label{tag8.7}
A= \sum_{k,l\ge2}kl
(p^\circ_{k+l-1}-p^\circ_kp^\circ_l)\frac{\pd^2}{\pd p^\circ_k \pd p^\circ_l}\\
+\sum_{k\ge2}\big[(1-\tth)k(k-1)p^\circ_{k-1}+(z+z')k p^\circ_{k-1}
-k(k-1)p^\circ_k-k\tth^{-1}zz'p^\circ_k\big]\frac{\pd}{\pd p^\circ_k}\\
+\tth\sum_{k,l\ge1}(k+l+1)p^\circ_k p^\circ_l\frac{\pd}{\pd p^\circ_{k+l+1}}
\end{multline}
with the understanding that $p^\circ_1=1$.
\end{corollary}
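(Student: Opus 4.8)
The plan is to exploit the single structural feature of the operator $B$ that was highlighted at the end of the proof of Theorem \ref{8.2}: once the cancellation performed there is taken into account, the formula \eqref{tag8.4} for $B$ contains \emph{no} term involving the derivative $\pd/\pd p_1$. Indeed, inspecting \eqref{tag8.4}, the second-order sum runs over $k,l\ge2$, the first-order sum over $k\ge2$, and in the last sum the differentiated generator is $p_{k+l+1}$ with $k,l\ge1$, hence of index $\ge3$. Thus one may write $B=\sum_\alpha c_\alpha\,\pd^\alpha$, where each $\pd^\alpha$ is a monomial in the derivatives $\pd/\pd p_k$ with $k\ge2$ only, and the coefficients $c_\alpha$ are polynomials in $p_1,p_2,\dots$\,. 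The whole corollary will follow from this one observation.

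First I would prove that $B$ preserves the principal ideal $(p_1-1)\La$. Since the element $p_1-1$ depends on $p_1$ alone, it is annihilated by every $\pd/\pd p_k$ with $k\ge2$, so each operator $\pd^\alpha$ commutes with multiplication by $p_1-1$. Consequently, for any $h\in\La$,
\begin{equation*}
B\big((p_1-1)h\big)=\sum_\alpha c_\alpha\,\pd^\alpha\big((p_1-1)h\big)
=(p_1-1)\sum_\alpha c_\alpha\,\pd^\alpha h=(p_1-1)\,Bh,
\end{equation*}
which again lies in $(p_1-1)\La$. Hence $B$ descends to a well-defined operator $A$ on the quotient $\La^\circ=\La/(p_1-1)\La$ of Definition \ref{8.6}, and the displayed identity moreover shows that $A$ is again a second-order differential operator, now expressed in the generators $p^\circ_2,p^\circ_3,\dots$ of $\La^\circ$.

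Next I would compute $A$ explicitly. Under the canonical projection $\La\to\La^\circ$ one has $p_1\mapsto p^\circ_1=1$, while for $k\ge2$ the generator $p_k$ maps to $p^\circ_k$ and the derivation $\pd/\pd p_k$ induces $\pd/\pd p^\circ_k$. Because $B$ carries no $\pd/\pd p_1$, passing to the quotient simply amounts to replacing each coefficient $c_\alpha$ by its image, i.e.\ substituting $p_1=1$ and $p_k=p^\circ_k$ $(k\ge2)$, and reading $\pd^\alpha$ as the corresponding monomial in the $\pd/\pd p^\circ_k$. Applying this substitution termwise to \eqref{tag8.4}: in the second-order sum $p_1p_{k+l-1}-p_kp_l\mapsto p^\circ_{k+l-1}-p^\circ_kp^\circ_l$; in the first-order sum the factors $p_1p_{k-1}$ and $p_k$ become $p^\circ_{k-1}$ and $p^\circ_k$ (using $p^\circ_1=1$ for the term $k=2$); and in the last sum $p_1p_kp_l\mapsto p^\circ_kp^\circ_l$. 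This yields exactly \eqref{tag8.7}, with the convention $p^\circ_1=1$.

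All of these steps are routine once the key observation is in place. The only thing that could go wrong is the presence of a surviving $\pd/\pd p_1$ term, which is precisely what the cancellation of the $\pd/\pd\h_2$ contributions in the proof of Theorem \ref{8.2} rules out. I therefore expect no genuine obstacle here: the real content sits upstream in that cancellation, and the present corollary is its immediate harvest.
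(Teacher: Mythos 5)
Your proposal is correct and rests on exactly the same observation as the paper's own (one-line) proof: formula \eqref{tag8.4} for $B$ contains no $\pd/\pd p_1$, hence $B$ preserves the ideal $(p_1-1)\La$ and descends to the quotient by the substitution $p_1\mapsto1$, $p_k\mapsto p^\circ_k$, $\pd/\pd p_k\mapsto\pd/\pd p^\circ_k$. You have merely written out the routine commutation and descent details that the paper leaves implicit.
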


\begin{proof} This immediately follows from Theorem \ref{8.4} because the expression
\eqref{tag8.4} for the operator $B$ does not contain $\pd/\pd p_1$.
\end{proof}

\section{Construction of Markov processes}\label{9}

\subsection{An operator semigroup approximation theorem}\label{9-1}
We start with the statement of a well-known general result on approximation of
continuous contraction semigroups by discrete ones. Our basic reference is the
book \cite{EK2} by Ethier and Kurtz, where one can also find references to
original papers.

Assume we are given real Banach spaces $L, L_1,L_2,\dots$ together with bounded
linear operators $\pi_n:L\to L_n$, $n=1,2,\dots$, such that
$\sup_n\Vert\pi_n\Vert<\infty$.

\begin{definition}[\rm Convergence of vectors in varying Banach spaces]\label{9.1}
Let $f\in L$ and $f_n\in L_n$, $n=1,2,\dots$\,. Write $f_n\to f$ if
$$
\lim_{n\to\infty}\Vert f_n-\pi_n f\Vert=0.
$$
\end{definition}

In particular, if $f_n=\pi_n f$ then $f_n\to f$ for trivial reasons. Clearly,
if $f_n\to f$ and $g_n\to g$ then $f_n+g_n\to f+g$. Generally speaking, it may
happen that $f_n\to f$ and $f_n\to g$ with $f\ne g$. However, such an
unpleasant situation can be excluded by imposing an extra assumption like
\eqref{tag9.4} below. In the concrete situation we will dealing with,
\eqref{tag9.4} is satisfied but this is not required for Theorem \ref{9.3}
below.

\begin{definition}[\rm Approximation of operator semigroups]\label{9.2}
Let $L$, $\{L_n\}$, and $\{\pi_n\}$ be as above; $\{T(t)\}_{t\ge0}$ be a
strongly continuous contraction semigroup in $L$; $T_n$ be a contraction in
$L_n$, $n=1,2,\dots$; $\{\epsi_n\}$ be a sequence of numbers such that
$\epsi_n>0$ and $\epsi_n\to0$. Each contraction $T_n$ generates a discrete
semigroup, $\{T_n^m\}_{m=0,1,\dots}$, in $L_n$.

Let us say that these discrete semigroups {\it approximate\/}, as $n\to\infty$,
the continuous semigroup $\{T(t)\}$ if for any $f\in L$
$$
T_n^{[t\epsi^{-1}_n]}\pi_n f\to T(t)f
$$
uniformly on arbitrarily large bounded intervals $0\le t\le t_0$. That is,
$$
\Vert T_n^{[t\epsi^{-1}_n]}\pi_n f-\pi_n T(t)f\Vert \to0
$$
uniformly on $t\in[0,t_0]$.
\end{definition}

Let us emphasize that the definition substantially depends on the sequence
$\{\epsi_n\}$ which determines the time scaling.

\begin{theorem}\label{9.3}
Consider the same data as in Definition \ref{9.2}, and set
\begin{equation}\label{tag9.1}
A_n=\epsi^{-1}_n(T_n-1), \qquad n=1,2,\dots\,.
\end{equation}
Let $\F\subset L$ be a dense subspace and $A:\F\to L$ be a closable operator
such that its closure $\bar A$ coincides with the generator of the semigroup
$\{T(t)\}$. Next, let us assume that the operators $A_n$ converge to the
operator $A$ in the following ``extended'' sense\/{\rm:} For any $f\in\F$ there
exists a sequence $\{f_n\in L_n\}$ such that
\begin{equation}\label{tag9.2}
f_n\to f \quad \text{\rm and} \quad A_nf_n\to Af.
\end{equation}

Then the semigroups $\{T^m_n\}$ approximate, as $n\to\infty$, the semigroup
$\{T(t)\}$ in the sense of Definition \ref{9.2}.
\end{theorem}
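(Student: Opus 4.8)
The plan is to factor the convergence through the genuine strongly continuous contraction semigroups generated by the bounded operators $A_n$. Since $T_n$ is a contraction and $A_n=\epsi_n^{-1}(T_n-1)$ is bounded, it generates the semigroup
\[
S_n(t):=e^{tA_n}=e^{-t\epsi_n^{-1}}\sum_{k\ge0}\frac{(t\epsi_n^{-1})^k}{k!}\,T_n^{\,k},
\]
which is a contraction because it is a convex (Poisson) combination of the contractions $T_n^{\,k}$. I would then prove the theorem in three steps: first, that $S_n(t)$ approximates $T(t)$; second, that the discrete iterate $T_n^{[t\epsi_n^{-1}]}$ stays uniformly close to $S_n(t)$ on bounded time intervals; and third, a density argument upgrading both from $\F$ to all of $L$.

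For the first step I would invoke the continuous Trotter--Kurtz theorem for semigroups in varying Banach spaces (Ethier--Kurtz \cite{EK2}). Its hypotheses are precisely ours: the $A_n$ generate contraction semigroups $S_n$, the operator $\bar A$ generates $\{T(t)\}$ and admits $\F$ as a core, and condition \eqref{tag9.2} supplies, for each $f\in\F$, approximants $f_n\to f$ with $A_nf_n\to Af$. The conclusion is that $S_n(t)\pi_n f\to T(t)f$ uniformly on compact $t$-intervals, for every $f\in L$. This step is where the structural assumptions are really consumed: internally it proceeds by showing that generator convergence on the core $\F$ forces resolvent convergence $(\la-A_n)^{-1}\pi_n f\to(\la-\bar A)^{-1}f$, which is exactly the point at which closability of $A$ and the identification $\bar A=$ generator are indispensable. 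I expect this resolvent-convergence argument to be the main obstacle if one wants a self-contained proof; otherwise it is cited verbatim from \cite{EK2}.

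The second step is the quantitative comparison between $T_n^{m}$, $m=[t\epsi_n^{-1}]$, and $S_n(t)$, and it is the only genuinely computational part. Writing $N=t\epsi_n^{-1}$ and $p_k(N)=e^{-N}N^k/k!$, the Poisson representation gives $S_n(t)g-T_n^{m}g=\sum_{k\ge0}p_k(N)\,(T_n^{\,k}-T_n^{\,m})g$. A telescoping estimate using that $T_n$ is a contraction yields $\|(T_n^{\,k}-T_n^{\,m})g\|\le|k-m|\,\|(T_n-1)g\|$, whence
\[
\|S_n(t)g-T_n^{m}g\|\le\Big(\sum_{k\ge0}p_k(N)|k-m|\Big)\|(T_n-1)g\|\le(\sqrt N+1)\,\|(T_n-1)g\|,
\]
the last inequality being the Poisson variance bound $\mathbb E|K-m|\le\sqrt{\operatorname{Var}K}+|N-m|\le\sqrt N+1$ for $K\sim\mathrm{Pois}(N)$. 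Taking $g=f_n$ with $f\in\F$ and recalling $(T_n-1)f_n=\epsi_nA_nf_n$, the right-hand side is at most $(\sqrt{t\epsi_n}+\epsi_n)\|A_nf_n\|$; since $A_nf_n\to Af$ keeps $\|A_nf_n\|$ bounded and $\epsi_n\to0$, this tends to $0$ uniformly on $[0,t_0]$.

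Finally, for $f\in\F$ I would combine the two steps with the contractivity of $T_n,T(t)$ and the bounds $\|\pi_n f-f_n\|\to0$ to obtain $\|T_n^{[t\epsi_n^{-1}]}\pi_n f-\pi_n T(t)f\|\to0$ uniformly on $[0,t_0]$. To pass to an arbitrary $f\in L$, I would use the density of $\F$: given $\de>0$ choose $f'\in\F$ with $\|f-f'\|<\de$ and estimate the difference for $f$ by that for $f'$ plus two error terms bounded by $\sup_n\|\pi_n\|\cdot\|f-f'\|$, using that $T_n^{[\,\cdot\,]}$ and $T(t)$ are contractions; since $\sup_n\|\pi_n\|<\infty$ the $\limsup$ is $\le 2\de\sup_n\|\pi_n\|$, and letting $\de\to0$ finishes the proof in the sense of Definition \ref{9.2}.
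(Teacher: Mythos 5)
Your proposal is correct, but it takes a different route from the paper in the following sense: the paper's entire proof is a one-line citation, namely that the statement is exactly the implication $(c)\Rightarrow(a)$ of \cite[Chapter 1, Theorem 7.5]{EK2}, whereas you reconstruct an actual proof. Your reconstruction is essentially the standard argument behind that cited theorem: you interpolate via the genuine contraction semigroups $S_n(t)=e^{tA_n}$ (contractions because they are Poisson mixtures of the powers $T_n^k$), invoke the continuous-time approximation theorem of \cite{EK2} for semigroups in varying Banach spaces to get $S_n(t)\pi_nf\to T(t)f$ from the extended convergence \eqref{tag9.2}, and then control the gap between $T_n^{[t\epsi_n^{-1}]}$ and $S_n(t)$ by the Chernoff-type estimate: with $N=t\epsi_n^{-1}$, $m=[N]$ and $K\sim\mathrm{Pois}(N)$, the telescoping bound $\Vert(T_n^k-T_n^m)g\Vert\le|k-m|\,\Vert(T_n-1)g\Vert$ together with $\mathbb{E}|K-m|\le\sqrt N+1$ gives $\Vert S_n(t)f_n-T_n^mf_n\Vert\le(\sqrt{t\epsi_n}+\epsi_n)\Vert A_nf_n\Vert\to0$ uniformly on $[0,t_0]$, since $\Vert A_nf_n\Vert$ stays bounded (it converges to $Af$ in the sense of Definition \ref{9.1} and $\sup_n\Vert\pi_n\Vert<\infty$). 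The final density step, using contractivity of $T_n^m$ and $T(t)$ and the uniform bound on $\Vert\pi_n\Vert$, is also sound; note that $\F$ is a core for $\bar A$ by the very hypothesis that the closure of $A|_{\F}$ is the generator, so the hypotheses of the continuous-time theorem are genuinely met. What each approach buys: the paper's citation is maximally economical and defers all analysis to \cite{EK2}; your version makes the mechanism visible (where closability and the core property enter, and where the $\sqrt{\epsi_n}$ rate comes from), at the cost of still outsourcing the resolvent-convergence step to the continuous-time Trotter--Kurtz theorem, so it is self-contained only modulo that ingredient.
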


\begin{proof}
This is exactly the implication $(c)\Rightarrow(a)$ in \cite[Chapter 1,Theorem
7.5]{EK2}.
\end{proof}

The notion of ``extended convergence'' \eqref{tag9.2} turns out to be well
adapted to the application we need. In the context of the paper \cite{BO8},
where we were concerned with the case $\tth=1$, we could manage with a weaker
version of the theorem, based on a stronger assumption: For any $f\in\F$,
$A_n\pi_nf\to Af$. However, in the case of general $\tth>0$ such a weaker
version seems to be insufficient.

\subsection{The Thoma simplex $\Om$ and the embeddings
$\Y_n\hookrightarrow\Om$}\label{9-2} We return to our concrete situation. As
$L_n$ we take the finite-dimensional vector space $\Fun(\Y_n)$ with the
supremum norm, and as $T_n$ we take the Markov transition operator introduced
in Definition \ref{8.1}. Clearly, $T_n$ is a contraction. To define the Banach
space $L$ and the operators $\pi_n:L\to L_n$ we need a preparation.

Let $[0,1]^\infty$ (the infinite-dimensional cube) be the direct product of
countably many copies of the closed unit interval $[0,1]$. We equip
$[0,1]^\infty$ with the product topology; then we get a compact topological
space.

Recall (see Subsection \ref{1-1}) that the Thoma simplex $\Om$ is the subset of
couples $(\al;\be)\in[0,1]^\infty\times[0,1]^\infty$ satisfying the conditions
$$
\al_1\ge\al_2\ge\dots\ge0,\quad
\be_1\ge\be_2\ge\dots\ge0,\quad\sum_i\al_i+\sum_j\be_j\le1.
$$
Clearly, $\Om$ is closed in the topology of $[0,1]^\infty\times[0,1]^\infty$
and hence is a compact topological space.

We set $L=C(\Om)$, the Banach space of real-valued continuous functions on
$\Om$ with the supremum norm.

For any $n$, we define an embedding $\iota_{\tth,n}:\Y_n\to\Om$ in the
following way. Given $\la\in\Y_n$, we divide the shape of $\la$ in the quarter
$(r,s)$ plane into two parts, $\frak A$ and $\frak B$:
$$
\frak A=\{(r,s)\in\shape(\la)\mid s\ge \tth r\}, \quad \frak
B=\{(r,s)\in\shape(\la)\mid s\le \tth r\}.
$$
Let $a_i$ denote the area of the intersection of $\frak A$ with the $i$th row
of boxes, $i=1,2,\dots$. Likewise, let $b_j$ be the area of the intersection of
$\frak B$ with the $j$th column of boxes, $j=1,2,\dots$\,. The sequences
$a_1,a_2,\dots$ and $b_1,b_2,\dots$ are nonincreasing, have finitely many
nonzero terms, and $\sum a_i+\sum b_j=n$. Now, we set
\begin{equation}\label{tag9.3}
\iota_{\tth,n}(\la)=(\al;\be):= \left(\frac{a_1}n,\, \frac{a_2}n,\, \dots;
\frac{b_1}n,\, \frac{b_2}n,\, \dots\right)\in\Om.
\end{equation}

It is easy to check that $\iota_{\tth,n}$ is indeed an embedding. Using it we
define the operator $\pi_n:L\to L_n$, that is, $\pi_n:C(\Om)\to\Fun(\Y_n)$, by
setting
$$
(\pi_n f)(\la)=f(\iota_{\tth,n}(\la)), \qquad f\in C(\Om), \quad \la\in\Y_n\,.
$$
Clearly, $\Vert\pi_n\Vert\le1$. Moreover, the operators $\pi_n$ possess the
following property: For any $f\in L=C(\Om)$,
\begin{equation}\label{tag9.4}
\Vert f\Vert=\lim_{n\to\infty}\Vert\pi_n f\Vert.
\end{equation}
Indeed, this follows from the obvious fact that any open subset in $\Om$ has a
nonempty intersection with $\iota_{\tth,n}(\Y_n)$ for all $n$ large enough.

\subsection{Thoma measures and moment coordinates}\label{9-3}
The content of this subsection is parallel to that of \cite[Subsection
4.4]{BO8} where we considered the particular case $\tth=1$.

Recall that to any point $(\al;\be)\in\Om$ we have assigned a probability
measure $\nu_{\al;\be}$ on the closed interval $[-\tth,1]$, see \eqref{tag1.4}.
The measure $\nu_{\al;\be}$ is called the {\it Thoma measure\/} corresponding
to $(\al;\be)$. Recall also that the moments $q_k=q_k(\al;\be)$ of
$\nu_{\al;\be}$ are given by formula \eqref{tag1.3} and note that the 0th
moment is always equal to 1. As was already said in the Introduction, we call
$q_1, q_2,\dots$ the {\it moment coordinates\/} of the point $(\al;\be)\in\Om$.
Observe that they are continuous functions on $\Om$. Indeed, since $\alpha_i$'s
decrease, the condition $\sum\alpha_i\le 1$ implies $\alpha_i\le i^{-1}$ for
any $i=1,2,\dots$, whence $\alpha_i^{k+1}\le i^{-k-1}$. Similarly,
$\beta_i^{k+1}\le i^{-k-1}$. It follows that the both series in \eqref{tag1.3}
are uniformly convergent on $\Om$, which implies their continuity as functions
on $\Om$. \footnote{This argument substantially relies on the fact that
$k+1\ge2$. Note that the function $(\al;\be)\mapsto \sum\al_i+\sum\be_i$ is not
continuous on $\Om$.}

Let $\frak M([-\tth,1])$ denote the space of probability Borel measures on
$[-\tth,1]$ equipped with the weak topology. Since this topology is determined
by convergence of moments, the assignment $(\al;\be)\mapsto\nu_{\al;\be}$
determines a homeomorphism of the Thoma simplex on a compact subset of $\frak
M([-\tth,1])$.

Note also that the moment coordinates are algebraically independent as
functions on $\Om$. Indeed, this holds even we restrict them on the subset with
all $\be_i$'s equal to 0. It follows that the algebra of polynomials
$\R[q_1,q_2,\dots]$ can be viewed as a subalgebra of the Banach algebra
$C(\Om)$. Since this subalgebra contains 1 and separates points, it is dense in
$C(\Om)$.

Using the correspondence
$$
p^\circ_k \longleftrightarrow q_{k-1}\,, \qquad k=2,3,\dots,
$$
we may identify the algebras $\La^\circ$ and $\R[q_1,q_2,\dots]$, which makes
it possible to realize $\La^\circ$ as a dense subalgebra of $C(\Om)$. In what
follow we will often identify elements $f\in\La^\circ$ and the corresponding
continuous functions $f(\al;\be)$ on $\Om$.

This also enables us to assign to any element $\varphi\in\La$ a continuous
function on $\Om$; according to our convention, this is simply
$\varphi^\circ(\al;\be)$. In equivalent terms, the morphism
$\varphi\mapsto\varphi^\circ(\,\cdot\,)$ is determined by setting
$$
p^\circ_1(\al;\be)\equiv1, \quad p^\circ_k(\al;\be)=\sum_{i=1}^\infty
\al_i^k+(-\tth)^{k-1}\sum_{i=1}^\infty\be_i^k, \qquad k=2,3,\dots,
$$
which is precisely the definition given in \cite{KOO}.

We take $\La^\circ=\R[p^\circ_2,p^\circ_3,\dots]=\R[q_1,q_2,\dots]$ as the
dense subspace $\F\subset C(\Om)$ which has been mentioned in Theorem \ref{1.1}
and in  Theorem \ref{9.3}.

\subsection{Boundary z-measures on $\Om$}\label{9-4}
Fix a couple $(z,z')$ from the principal or complementary series and consider
the system  $\{M^{(n)}_{\tth,z,z'}\}$ of z-measures that we have defined in
Section \ref{5}. Recall that in Subsection \ref{9-2} we have defined the
embeddings $\iota_{\tth,n}:\Y_n\hookrightarrow\Om$ (formula \eqref{tag9.3}).

\begin{theorem}\label{9.4} As $n\to\infty$, the pushforward of the z-measure
$M^{(n)}_{\tth,z,z'}$ under $\iota_{\tth,n}$ weakly converges to a probability
measure $M_{\tth,z,z'}$ on $\Om$.
\end{theorem}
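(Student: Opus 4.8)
The plan is to first extract the candidate limit measure from the abstract boundary theory and then establish weak convergence by a Fubini-plus-concentration argument. The family $\{M^{(n)}_{\tth,z,z'}\}$ is, by the results of Section \ref{5}, a coherent system for the down probabilities $p^\down_\tth$. Hence Theorem \ref{2.2} (the entrance-boundary theorem of \cite{KOO}), applied with $\Om(p^\down_\tth)=\Om$, produces a unique probability measure $M_{\tth,z,z'}$ on $\Om$ such that
\[
M^{(n)}_{\tth,z,z'}(\la)=\int_\Om\kernel(\la,\om)\,M_{\tth,z,z'}(d\om),\qquad\kernel(\la,\om)=[p_1^n:\mathcal P_{\la;\tth}]\,\mathcal P^\circ_{\la;\tth}(\om),
\]
the explicit form of the kernel being read off from \eqref{tag1.11} (equivalently \eqref{tag1.8}). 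This $M_{\tth,z,z'}$ is the claimed limit; it remains only to prove $(\iota_{\tth,n})_*M^{(n)}_{\tth,z,z'}\Rightarrow M_{\tth,z,z'}$, where $\iota_{\tth,n}$ is the embedding \eqref{tag9.3}.

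Next I would fix an arbitrary test function $g\in C(\Om)$ and substitute the integral representation of $M^{(n)}_{\tth,z,z'}$. Since $\Y_n$ is finite, the finite sum and the integral may be interchanged freely, giving
\[
\int_\Om g\,d\bigl((\iota_{\tth,n})_*M^{(n)}_{\tth,z,z'}\bigr)=\int_\Om G_n(\om)\,M_{\tth,z,z'}(d\om),\qquad G_n(\om):=\sum_{\la\in\Y_n}g(\iota_{\tth,n}(\la))\,\kernel(\la,\om).
\]
For each fixed $\om$ the numbers $\kernel(\cdot,\om)$ form a probability measure on $\Y_n$: applying the algebra morphism $\La\to C(\Om)$ of Subsection \ref{1-3} (which sends $p_1\mapsto1$ and $\mathcal P_{\la;\tth}\mapsto\mathcal P^\circ_{\la;\tth}$) to the Pieri identity $\sum_\la[p_1^n:\mathcal P_{\la;\tth}]\mathcal P_{\la;\tth}=p_1^n$ shows $\sum_{\la\in\Y_n}\kernel(\la,\om)=1$. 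Thus $\kernel(\cdot,\om)$ is precisely the $n$th marginal $M^{(n)}_\om$ of the extreme coherent system attached to the boundary point $\om$, $G_n(\om)$ is the integral of $g\circ\iota_{\tth,n}$ against $M^{(n)}_\om$, and in particular $|G_n(\om)|\le\|g\|_\infty$ uniformly in $n$ and $\om$.

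The substantive input is the concentration (law of large numbers) for the extreme marginals: for every $\om\in\Om$ one has $(\iota_{\tth,n})_*M^{(n)}_\om\Rightarrow\delta_\om$ as $n\to\infty$. This expresses exactly the fact that $\Om$ is the boundary for $p^\down_\tth$ and that the maps \eqref{tag9.3} are the correct approximating embeddings; it is the content of \cite{KOO}, and I would invoke it here. Granting it, $G_n(\om)\to g(\om)$ pointwise on $\Om$, and dominated convergence (with dominating constant $\|g\|_\infty$) yields $\int_\Om G_n\,dM_{\tth,z,z'}\to\int_\Om g\,dM_{\tth,z,z'}$. As $g\in C(\Om)$ was arbitrary, this is the desired weak convergence.

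The main obstacle is precisely the concentration statement $(\iota_{\tth,n})_*M^{(n)}_\om\Rightarrow\delta_\om$, which carries all the genuine asymptotic content while the rest is bookkeeping. An alternative, more computational route avoids naming the extreme systems: by weak compactness of the space of probability measures on the compact space $\Om$ it suffices to show that $\sum_\la g(\iota_{\tth,n}(\la))M^{(n)}_{\tth,z,z'}(\la)$ converges for $g$ in the dense subalgebra $\La^\circ=\R[q_1,q_2,\dots]$ of moment coordinates, and to check that all subsequential limits agree; one then rewrites $g\circ\iota_{\tth,n}$, via the asymptotic formula for $\tth$-regular functions of \cite{KOO}, as a suitably normalized element of $\A_\tth$ plus a lower-order error, and evaluates the resulting expectation against the z-measure using \eqref{tag1.8}. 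Either way, the asymptotics of the extreme characters (equivalently, of the $\tth$-regular functions) is the crux of the matter.
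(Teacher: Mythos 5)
Your proposal is correct, but note that the paper's own ``proof'' of Theorem \ref{9.4} is a single citation: the statement is the specialization to $M=M_{\tth,z,z'}$ of Theorem B of \cite{KOO}, which asserts the weak convergence $(\iota_{\tth,n})_*M^{(n)}\Rightarrow M$ for an \emph{arbitrary} coherent system and its boundary measure, and the paper simply refers to the proof of that theorem. Your route is structurally different: you disintegrate $M^{(n)}_{\tth,z,z'}$ over the boundary via the kernel of Theorem \ref{2.2} (your verification that $\kernel(\,\cdot\,,\om)$ is a probability measure on $\Y_n$ is fine; nonnegativity of the kernel is part of the statement of Theorem \ref{2.2}, and the normalization follows from the Pieri rule as you say), and then reduce the claim, by exchanging the finite sum with the integral and applying dominated convergence, to the concentration $(\iota_{\tth,n})_*\kernel(\,\cdot\,,\om)\Rightarrow\de_\om$ for each $\om\in\Om$. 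This reduction is valid and gives a genuinely different decomposition of the problem: a law of large numbers for the extreme coherent systems plus disintegration. Be clear, however, about the status of the concentration statement you invoke: it is \emph{not} a consequence of the abstract Theorem \ref{2.2}, which says nothing about the embeddings $\iota_{\tth,n}$; it is exactly the $M=\de_\om$ case of \cite[Theorem B]{KOO}, i.e., of the very convergence being proved. So your argument localizes, rather than removes, the analytic content --- which you correctly acknowledge as the crux. The proof in \cite{KOO} (and the way the present paper reuses the same machinery in Step 1 of the proof of Theorem \ref{9.6}) instead follows what you call the alternative route: test against the dense algebra of moment coordinates, replace $g\circ\iota_{\tth,n}$ by $n^{-m}F$ for a suitable $F\in\A_\tth$ up to a uniform $O(n^{-1/2})$ error (Theorem \ref{9.5}), and evaluate the resulting expectations through the coherency relations. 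That argument treats all coherent systems simultaneously, which is why the paper can quote it verbatim for the z-measures; your primary route buys probabilistic transparency at the price of citing the extreme case from the same source.
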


\begin{proof} See \cite[Section 8, proof of Theorem B]{KOO}.
\end{proof}

We call the limit measure $M_{\tth,z,z'}$ on $\Om$ the {\it boundary
z-measure\/}. This agrees with the definitions given in Section \ref{2} after
the statement of Theorem \ref{2.2}. Indeed, as shown in \cite{KOO}, the
topological space $\Om(p^\down)$ corresponding to the system
$p^\down=p^\down_\tth$ can be identified with the Thoma simplex $\Om$ and
$M_{\tth,z,z'}$ is just the measure $M$ that appears in \eqref{tag2.3} when
$\{M^{(n)}\}=M^{(n)}_{\tth,z,z'}$.

Note that the kernel $\mathcal K(\la,\om)$ that appears in formula
\eqref{tag2.3} has the following form:
$$
\mathcal K(\la,\om)=\dim_\tth\la\cdot(\mathcal P^{(\tth)}_\la)^\circ(\al;\be),
\qquad \la\in\Y, \quad \om=(\al;\be)\in\Om,
$$
where $\dim_\tth\la$ is a certain ``$\tth$-version'' of the conventional
dimension function $\dim\la$ (see \cite[\S6]{KOO}) and $\mathcal
P^{(\tth)}_\la\in\La$ is the Jack symmetric function with parameter $\tth$ and
index $\la$.

\subsection{Asymptotics of $\tth$-regular functions}\label{9-5}
Introduce a notation: if $F\in\A_\tth$ is an element of degree $\le m$, that
is, $F\in\A_\tth^{(m)}$ (see Definition \ref{6.2}), then  $[F]_m$ will denote
its highest homogeneous term, which is an element of the quotient space
$\A_\tth^{(m)}/\A_\tth^{(m-1)}$ identified with the $m$th homogeneous component
of the graded algebra $\La$.

\begin{theorem}\label{9.5} Let $F\in\A_\tth$ be an element of degree $\le m$ and
$\varphi=[F]_m\in\La$, as defined above. There exists a constant $C>0$
depending only on $F$, such that for any $n=1,2,\dots$ and any $\la\in\Y_n$ the
following estimate holds
$$
\left|\frac{F(\la)}{n^m}-\varphi^\circ(\iota_{\tth,n}(\la))\right|\le\frac
C{\sqrt n}\,.
$$
\end{theorem}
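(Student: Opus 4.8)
The plan is to reduce the statement to the single generators $p^*_k$ of $\A_\tth$ and then prove a quantitative law of large numbers for each of them. First I would exploit the multiplicative structure. Writing $F=\sum_{\rho}c_\rho\,(p^*)^\rho$ with $(p^*)^\rho=\prod_l p^*_{\rho_l}$, the sum over partitions $\rho$ with $|\rho|\le m$, the identification $\gr\A_\tth=\La$ that sends the top term of $p^*_k$ to $p_k$ gives $\varphi=[F]_m=\sum_{|\rho|=m}c_\rho\prod_l p_{\rho_l}$, hence $\varphi^\circ=\sum_{|\rho|=m}c_\rho\prod_l p^\circ_{\rho_l}$. Everything then follows from the single-generator estimate
$$
\Big|\frac{p^*_k(\la)}{n^k}-p^\circ_k(\iota_{\tth,n}(\la))\Big|\le\frac{C_k}{\sqrt n},\qquad k=1,2,\dots,
$$
uniformly in $n$ and $\la\in\Y_n$. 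Indeed, since $p^\circ_k$ is bounded on the compact space $\Om$, this estimate shows each factor $p^*_{\rho_l}(\la)/n^{\rho_l}$ is uniformly bounded; for $|\rho|=m$ the telescoping $\prod a_l-\prod b_l=\sum_j(a_j-b_j)\prod_{l<j}a_l\prod_{l>j}b_l$ converts the per-factor error $O(n^{-1/2})$ into an $O(n^{-1/2})$ error for the whole monomial, while for $|\rho|<m$ the prefactor $n^{|\rho|-m}\le n^{-1}$ makes the contribution $O(n^{-1})$, negligible and absent from $\varphi^\circ$. Summing over the finitely many $\rho$ yields the claim.

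It remains to prove the single-generator estimate, which is the analytic heart. For $k=1$ it is exact, since $p^*_1(\la)=|\la|=n$ and $p^\circ_1\equiv1$. For $k\ge2$ I would use $n^k p^\circ_k(\iota_{\tth,n}(\la))=\sum_i a_i^k+(-\tth)^{k-1}\sum_j b_j^k$, where $a_i,b_j$ are the areas from \eqref{tag9.3}, and reduce to
$$
p^*_k(\la)=\sum_i a_i^k+(-\tth)^{k-1}\sum_j b_j^k+O(n^{k-1/2}).
$$
A direct area computation gives $a_i=\int_{i-1}^{i}(\la_i-\tth r)_+\,dr=(\la_i-\tth i)_++O(1)$ with a uniformly bounded remainder, and by the transposition duality of Remark \ref{4.3} and Corollary \ref{6.9}, which exchanges $\tth\leftrightarrow\tth^{-1}$ and $\frak A\leftrightarrow\frak B$, one gets symmetrically $b_j=(\la'_j-\tth^{-1}j)_++O(1)$. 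Matching the resulting main terms to $p^*_k(\la)=\sum_i[(\la_i-\tth i)^k-(-\tth i)^k]$ is then a $\tth$-deformed particle--hole identity: the part of $p^*_k$ coming from above-diagonal rows supplies $\sum_i(\la_i-\tth i)_+^k$, and the crucial cancellation between $(\la_i-\tth i)^k$ and $(-\tth i)^k$ in the remaining rows (already visible for $\la=(1^n)$, where it produces exactly the $(-\tth)^{k-1}n^k$ expected from the $\be$-side) redistributes the below-diagonal rows of $\la$ into the column sum over the $b_j$.

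The main obstacle is the uniform error bound, that is, securing the rate $n^{-1/2}$ rather than a cruder one. Two sources of error must be controlled. The base-shift errors obey $a_i^k-(\la_i-\tth i)_+^k=O\big((\la_i-\tth i)^{k-1}\big)$ and sum to $O(\sum_i a_i^{k-1})=O(n^{k-1})$, since $\sum_i\al_i^{k-1}\le1$ for $k\ge2$; this is comfortably within budget. The delicate contribution is the band around the diagonal $s=\tth r$: there a row straddles the line, the assignment of its boxes to $\frak A$ or $\frak B$ is ambiguous, and the cancellation structure of $p^*_k$ is neither purely $\al$- nor purely $\be$-type. One must show that only $O(\sqrt n)$ indices lie in this band, a Durfee-type bound reflecting that the diagonal of a diagram of area $n$ meets $O(\sqrt n)$ rows, and that each contributes a bounded amount after the $n^{-k}$ rescaling. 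Keeping the $(\la_i-\tth i)^k-(-\tth i)^k$ cancellation intact while carrying out this boundary-layer count is the step I expect to be technically demanding; it is precisely the asymptotic analysis performed in \cite{KOO}.
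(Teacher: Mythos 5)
Your reduction of the general statement to the single-generator estimate for $p^*_k$ is correct and cleanly done: the telescoping identity for products, the uniform boundedness of the moment coordinates $p^\circ_k$ on the compact space $\Om$, and the observation that monomials with $|\rho|<m$ pick up a factor $n^{|\rho|-m}\le n^{-1}$ together reduce everything to
$$
\Bigl|\frac{p^*_k(\la)}{n^k}-p^\circ_k(\iota_{\tth,n}(\la))\Bigr|\le\frac{C_k}{\sqrt n}\,,
\qquad k=1,2,\dots\,.
$$
But this estimate is the entire analytic content of the theorem, and you do not prove it: you restate it as the approximate identity $p^*_k(\la)=\sum_i a_i^k+(-\tth)^{k-1}\sum_j b_j^k+O(n^{k-1/2})$, describe qualitatively the cancellation that should produce it, and then declare the boundary-layer count near the diagonal $s=\tth r$ to be ``precisely the asymptotic analysis performed in \cite{KOO}''. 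A blind proof cannot end by deferring its decisive step to the literature; that is a genuine gap, and it sits exactly at the heart of the matter.

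For what it is worth, your sketch can be completed, and without any delicate cancellation, by making your ``particle--hole identity'' exact at the level of integrals: from $(\la_i-\tth i)^k-(-\tth i)^k=k\int_0^{\la_i}(s-\tth i)^{k-1}\,ds$ one gets $p^*_k(\la)=k\iint_{\shape(\la)}(s-\tth r)^{k-1}\,dr\,ds+O(n^{k-1})$, and splitting the domain along $s=\tth r$ into $\frak A$ (integrated row by row) and $\frak B$ (integrated column by column) gives exactly $\sum_i\int_{i-1}^i(\la_i-\tth r)_+^k\,dr+(-\tth)^{k-1}\sum_j\int_{j-1}^j(\la'_j-\tth^{-1}s)_+^k\,ds$; comparing these Riemann sums with $\sum_i a_i^k$ and $\sum_j b_j^k$ costs $O(n^{k-1})$ plus an $O(\sqrt n)$ term coming from your Durfee-type bound on the number of rows (columns) meeting the diagonal, all well within the budget $O(n^{k-1/2})$. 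You should also know that the paper itself proves nothing here: its proof of Theorem \ref{9.5} consists of a citation of \cite[Theorem 8.1]{KOO}, together with the observation that the embedding \eqref{tag9.3} differs from the embedding used in \cite{KOO} by at most $1/n$ in each of the $O(\sqrt n)$ nonzero coordinates, so that the discrepancy is absorbed in the $C/\sqrt n$ bound. Thus both your attempt and the paper ultimately rest on \cite{KOO}; the difference is that the paper presents this openly as a citation, whereas your text has the form of a proof whose crucial step is missing.
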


\begin{proof} This result was proved in \cite[Theorem 8.1]{KOO}, only the
definition of embeddings $\Y_n\hookrightarrow\Om$ employed in \cite{KOO}
slightly differs from that given in Subsection \ref{9-2} above. However, the
difference between the two definitions is unessential. This is seen from the
proof given in \cite{KOO} and the following observation: If $\la\in\Y_n$,
$(\al;\be)=\iota_{\tth,n}(\la)$, and $(\al';\be')$ is the image of $\la$
according to the definition of \cite{KOO}, then
$$
|\al_i-\al'_i|\le \frac1n\,, \quad |\be_i-\be'_i|\le \frac1n\,, \qquad
i=1,2,\dots,
$$
and the number of nonzero coordinates is of order $\sqrt n$. Alternatively, the
reader may simply take the definition of \cite{KOO}. The reason to modify that
definition is purely aesthetic: it is slightly asymmetric with respect to
transposition of rows and columns of a diagram.
\end{proof}

\subsection{The main results}\label{9-6}
Now we are in a position to state and prove the main results of the present
paper. For the reader's convenience let us recall the basic data and
definitions:

$\bullet$ We fix the three basic parameters $\tth,z,z'$, where $\tth>0$ and the
couple $(z,z')$ belongs to the principal or complementary series (Proposition
\ref{5.3} and Definition \ref{5.4}).

$\bullet $ $\Om$ is the Thoma simplex; it is a compact topological space
(Subsection \ref{9-2}).

$\bullet$ $C(\Om)$ is the Banach algebra of continuous real-valued functions on
$\Om$ with supremum norm.

$\bullet$ $\La^\circ$ is the quotient of the algebra $\La$ of symmetric
functions modulo the principal ideal $(p_1-1)\La$ (Definition \ref{8.6});
$\La^\circ$ is embedded into $C(\Om)$ as a dense subalgebra (Subsection
\ref{9-3}).

$\bullet$ $A:\La^\circ\to\La^\circ$ is the operator defined in Corollary
\ref{8.7}, formula \eqref{tag8.7}; we regard $A$ as a densely defined operator
in the Banach space $C(\Om)$ (note that under the identification
$p^\circ_{k+1}=q_k$, \eqref{tag8.7} coincides with \eqref{tag1.5}).

$\bullet$ $T_n: \Fun(\Y_n)\to\Fun(\Y_n)$, $n=1,2,\dots$, are the Markov chain
transition operators introduced in Definition \ref{8.1}.

\begin{theorem}\label{9.6}
{\rm(i)} The operator $A$ is closable and its closure
$\bar A$ generates a strongly continuous contraction semigroup
$\{T(t)\}_{t\ge0}$ in $C(\Om)$.

{\rm(ii)} As $n\to\infty$, the discrete semigroups in the finite-dimensional
spaces\/ $\Fun(\Y_n)$ generated by the contractions $T_n$ approximate the
semigroup $\{T(t)\}_{t\ge0}$ in the sense of Definition \ref{9.2} with the
following choice of the time scaling factors\/{\rm:} $\epsi_n=n^{-2}$.

{\rm(iii)} The semigroup $\{T(t)\}_{t\ge0}$ is a conservative Markov semigroup.
\end{theorem}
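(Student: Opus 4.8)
The plan is to deduce all three statements from the abstract approximation theorem (Theorem \ref{9.3}), whose decisive hypothesis is the extended convergence \eqref{tag9.2}. The first, reusable step is therefore to produce, for each $f\in\F=\La^\circ$, a sequence $f_n\in\Fun(\Y_n)$ with $f_n\to f$ and $n^2(T_n-1)f_n\to Af$; by linearity it suffices to treat a monomial $f=\prod_{k\ge2}(p^\circ_k)^{m_k}$ of weight $m=\sum_k k\,m_k$. I would lift it to the homogeneous element $\varphi=\prod_k p_k^{m_k}\in\La$ (so $\varphi^\circ=f$, $\deg\varphi=m$) and to $F=\prod_k(p^*_k)^{m_k}\in\A_\tth$, whose top term is $[F]_m=\varphi$ under $\gr\A_\tth\cong\La$. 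Setting $f_n=n^{-m}F_n$, Theorem \ref{9.5} gives $\Vert f_n-\pi_n f\Vert\le C/\sqrt n$, i.e. $f_n\to f$. By Theorem \ref{8.2}, $(T_n-1)F_n=\epsi_n(\wt BF)_n$ with $\epsi_n=[(\tth^{-1}zz'+n)(n+1)]^{-1}$; since $\wt B$ has degree $0$, the element $\wt BF$ has degree $\le m$ with top term $[\wt BF]_m=B\varphi$, so a second application of Theorem \ref{9.5} yields $n^{-m}(\wt BF)_n\to(B\varphi)^\circ=Af$. Because $n^2\epsi_n\to1$, I conclude $n^2(T_n-1)f_n=n^2\epsi_n\cdot n^{-m}(\wt BF)_n\to Af$.

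For (i) I would first show $A$ is dissipative. Each $T_n$ is a contraction, so $A_n:=n^2(T_n-1)$ satisfies $\Vert(\lambda-A_n)g\Vert\ge\lambda\Vert g\Vert$ for all $\lambda>0$ and $g\in\Fun(\Y_n)$, by a one-line estimate. Applying this to the sequences $f_n$ above, noting $(\lambda-A_n)f_n\to(\lambda-A)f$, and using that $g_n\to g$ forces $\Vert g_n\Vert\to\Vert g\Vert$ (via \eqref{tag9.4}), I obtain $\Vert(\lambda-A)f\Vert\ge\lambda\Vert f\Vert$ for all $f\in\F$. Hence $A$ is dissipative and densely defined, so it is closable.

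The main obstacle is the remaining generation hypothesis: by Hille--Yosida (Lumer--Phillips) it suffices to exhibit $\lambda>0$ with $\operatorname{range}(\lambda-A)$ dense. Here I would exploit the triangular structure of $A$. Under Definition \ref{8.6} the filtration of $\La^\circ$ is the weight grading ($\deg p^\circ_k=k$), and by Corollary \ref{8.7} the operator $A$ does not raise weight, hence preserves each finite-dimensional piece $\F^{(m)}$. Inspecting \eqref{tag8.7}, the weight-preserving part is $-\sum_{k,l\ge2}kl\,p^\circ_kp^\circ_l\frac{\pd^2}{\pd p^\circ_k\pd p^\circ_l}-\sum_{k\ge2}\bigl(k(k-1)+k\tth^{-1}zz'\bigr)p^\circ_k\frac{\pd}{\pd p^\circ_k}$, and a short computation (writing the double sum through the Euler operator $E=\sum_k kp^\circ_k\pd/\pd p^\circ_k$) shows it is diagonal in the monomial basis with eigenvalue $-d(d-1+\tth^{-1}zz')$ on every weight-$d$ monomial, independent of its shape. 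These numbers are bounded above and tend to $-\infty$, so for $\lambda$ exceeding their supremum each finite matrix $(\lambda-A)|_{\F^{(m)}}$ is invertible; thus $(\lambda-A)\F\supseteq\bigcup_m\F^{(m)}=\F$ is dense, and $\bar A$ generates a strongly continuous contraction semigroup $\{T(t)\}$. This computation simultaneously delivers the spectrum and multiplicities of Theorem \ref{1.1}(iii). The delicate points are verifying that admissibility of $(z,z')$ (Proposition \ref{5.3}) keeps these eigenvalues non-positive, and that the weight-lowering terms genuinely strictly lower weight, so that the eigenvalues of $A|_{\F^{(m)}}$ are exactly the diagonal ones.

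Finally, (ii) is immediate: all hypotheses of Theorem \ref{9.3} are now in place (with $\F=\La^\circ$, $\epsi_n=n^{-2}$, and the sequences $f_n$ above), giving the claimed approximation in the sense of Definition \ref{9.2}. For (iii) I would pass the Markov property through the limit. Each $T_n$ is Markov, so $T_n^{[tn^2]}$ is positive and fixes the constant function $1$; since $\pi_n 1=1$, part (ii) forces $T(t)1=1$, so $\{T(t)\}$ is conservative (consistent with $A1=0$ in \eqref{tag8.7}). For positivity, if $f\ge0$ then $T_n^{[tn^2]}\pi_n f\ge0$ on $\Y_n$; its uniform closeness to $\pi_n T(t)f$, together with the density of $\iota_{\tth,n}(\Y_n)$ in $\Om$ and the continuity of $T(t)f$, forces $T(t)f\ge0$. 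Hence $\{T(t)\}$ is a conservative Markov semigroup.
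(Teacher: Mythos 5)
Your proposal is correct, and most of it coincides with the paper's own proof: the construction of approximating sequences $f_n=n^{-m}F_n$ via Theorem \ref{8.2}, Theorem \ref{9.5} and Corollary \ref{8.7}, the dissipativity of $A$ obtained by passing to the limit in the dissipativity of $A_n=n^2(T_n-1)$ using \eqref{tag9.4}, the deduction of claim (ii) from Theorem \ref{9.3}, and the proof of claim (iii) by transporting positivity through the approximation and using the density of $\iota_{\tth,n}(\Y_n)$ in $\Om$ are precisely the paper's steps 1, 2, 4 and 5 (your reduction to monomials and your specific lift $F=\prod_k(p^*_k)^{m_k}$ are harmless variants of the paper's arbitrary lift). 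The one genuine divergence is the range-density step in (i). The paper's argument is softer: since $A$ does not raise degree, each finite-dimensional subspace $\La^{\circ(m)}$ is $A$-invariant, and dissipativity alone forces $s-A$ to be injective, hence surjective, on $\La^{\circ(m)}$ for every $s>0$; no spectral information is needed. You instead diagonalize the weight-preserving part of $A$ as $-E(E-1+\tth^{-1}zz')$ and argue by triangularity that $(\lambda-A)\vert_{\La^{\circ(m)}}$ is invertible for $\lambda>0$. This is valid — your Euler-operator computation reproduces formula \eqref{tag9.9} of the paper — and it buys you Theorem \ref{9.9}(ii) (the spectrum and multiplicities stated in Theorem \ref{1.1}(iii)) at the same time, which the paper establishes separately by exactly this computation. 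The price is the two delicate points you flag: one must check that every non-diagonal term of \eqref{tag8.7} strictly lowers weight (true: each such term has degree $\le-1$), and that $\tth^{-1}zz'>0$ for admissible $(z,z')$ — true for the principal series since $z'=\bar z$ gives $zz'=|z|^2>0$, and for the complementary series since $0$ belongs to the lattice $\Z+\tth\cdot\Z$, so $z$ and $z'$ lie in a common open interval avoiding $0$ and have the same sign. The paper's route avoids both checks, at the cost of postponing the spectral analysis to Theorem \ref{9.9}.
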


Recall that the last property means that each operator $T(t)$ preserves the
constant function $1$ and maps into itself the cone of nonnegative functions in
$C(\Om)$.

\begin{proof} Step 1. As in \eqref{tag9.1}, set $A_n=\epsi^{-1}_n(T_n-1)$. Let us
prove that the operators $A_n$ approximate the operator $A$ in the ``extended''
sense, as explained in Theorem \ref{9.3}.

First of all, it is more convenient to re-define the factors $\epsi_n$
according to \eqref{tag8.1}. Since the new factors are asymptotically
equivalent to $n^{-2}$, this does does not affect the result.

Given $f\in\La^\circ$, fix a natural number $m$ so large that $\deg f\le m$. By
the very definition of $\La^\circ$ and the filtration therein, there exists a
homogeneous element $\varphi\in\La$ of degree $m$ such that $\varphi^\circ=f$.
Next, choose an arbitrary element $F\in\A_\tth^{(m)}$ such that
$[F]_m=\varphi$, see Subsection \ref{9-4} for the notation. Finally, set
$$
f_n=\frac{F_n}{n^m}\,, \qquad n=1,2,\dots\,.
$$
Here, in accordance with the notation of Subsection \ref{7-1}, $F_n$ stands for
the restriction of $F$ to the subset $\Y_n\subset\Y$, so that
$F_n\in\Fun(\Y_n)$.

By virtue of Theorem \ref{9.5}, $\Vert f_n-\pi_n f\Vert\le C/\sqrt n$, so that
$f_n\to f$ in the sense of Definition \ref{9.1}.

Further, set $G=\wt BF$, where the operator $\wt B:\A_\tth\to\A_\tth$ has been
introduced in Theorem \ref{8.2}, and also set
$$
g_n=\frac{G_n}{n^m}\,, \qquad n=1,2,\dots\,.
$$
By virtue of Theorem \ref{8.2}, $G\in\A_\tth^{(m)}$ and
$[G]_m=B[F]_m=B\varphi$, where the operator $B:\La\to\La$ has been introduced
in Definition \ref{8.3}. Applying again Theorem \ref{9.5} we get $g_n\to g$,
where $g:=(B\varphi)^\circ$.

On the other hand, Corollary \ref{8.7} says that
$(B\varphi)^\circ=A\varphi^\circ=Af$. Therefore, $A_nf\to Af$. Thus, we have
proved the required ``extended'' convergence $A_n\to A$.

\medskip

Step 2. Let us prove that $A$ is a dissipative operator, that is, for any $s>0$
and any $f\in\La^\circ\subset C(\Om)$ we have the inequality
$\Vert(s-A)f\Vert\ge s\Vert f\Vert$.

Indeed, according to the result of step 1, there exist $f_n\in\Fun(\Y_n)$ such
that $f_n\to f$ and $A_nf_n\to Af$. Since the operators $T_n$ are contractions,
the operators $A_n$ are dissipative. Consequently, $\Vert(s-A_n)f_n\Vert\ge
s\Vert f_n\Vert$.

On the other hand, because of \eqref{tag9.4}, in our situation, convergence of
vectors (in the sense of Definition \ref{9.1}) implies convergence of their
norms. Therefore, we may pass to the limit in the above inequality for $f_n$
and get the desired inequality for $f$.

\medskip

Step 3. As is seen from \eqref{tag8.7}, the operator $A$ does not raise degree
in the sense of the canonical filtration of the algebra $\La^\circ$ (this is
also obvious because $B$ preserves the graduation in $\La$). Let
$\La^{\circ(m)}\subset\La^\circ$ stand for the subspace of elements of degree
$\le m$. Each such subspace has finite dimension and is invariant under $A$.
Because $A$ is dissipative (step 2),  the operator $s-A$ maps $\La^{\circ(m)}$
onto itself for any $s>0$ and any $m$. Since the subspaces $\La^{\circ(m)}$
form an ascending chain and their union is the whole space $\La^\circ$, we
conclude that $s-A$ maps $\La^\circ$ onto itself. The combination of this
property and the dissipativity property entails that $A$ is closable and its
closure $\bar A$ serves as the generator of a strongly continuous contraction
semigroup $\{T(t)\}_{t\ge0}$ in $C(\Om)$: this is a version of the
Hille--Yosida theorem, see, e.~g., Theorem 2.12 in \cite{EK2}. Thus, we have
checked claim {\rm(i)} of the theorem.

\medskip

Step 4. Now claim {\rm(ii)} immediately follows from Theorem \ref{9.3}. Indeed,
we have just established the existence of the semigroup $\{T(t)\}$, and the
validity of the hypothesis of Theorem \ref{9.3} has been verified on step 1.

\medskip

Step 5. Let us check claim {\rm(iii)}. Since the constant term of the
differential operator $A$ vanishes, we have $A1=0$, which implies $T(t)1=1$ for
all $t\ge0$. Therefore, the semigroup is conservative.

Let us to prove that if $f\in C(\Om)$ is nonnegative then so is $T(t)f$.

Obviously, $\pi_nf$ is a nonnegative function for any $n$. Since $T_n$ is the
transition operator of a Markov chain, $T_n^m\pi_nf$ is a nonnegative function
on $\Y_n$ for any natural number $m$. In particular, $T^{[t\epsi^{-1}_n]}\pi_n
f\ge0$.

On the other hand, because claim {\rm(ii)} has already been established, we
know that $\de_n:=\Vert T_n^{[t\epsi^{-1}_n]}\pi_nf-\pi_n T(t)f\Vert\to 0$ as
$n\to 0$ (see Definition \ref{9.2}). Therefore, $\pi_n T(t)f\ge-\de_n$ on
$\Y_n$. In other words, $T(t)f\ge-\de_n$ on the subset
$\iota_{\tth,n}(\Y_n)\subset\Om$. As pointed out in the very end of Subsection
\ref{9-2}, this finite subset becomes more and more dense in $\Om$ as
$n\to\infty$. Since $\de_n\to0$ and the function $T(t)f$ is continuous, it is
nonnegative on the whole $\Om$.

This concludes the proof.
\end{proof}

By a well-known general result (see \cite[Chapter 4, Theorem 2.7]{EK2}), the
Markov semigroup $\{T(t)\}$ constructed in Theorem \ref{9.6} gives rise to a
strong Markov process in $\Om$ with c\`adl\`ag sample paths. Let us denote this
process by $\ome_{\tth,z,z'}(t)$. Actually, due to the knowledge of the
explicit form of the pre-generator $A$ (formula \eqref{tag8.7}) one can get a
stronger result:

\begin{theorem}\label{9.7} The Markov process $\ome_{\tth,z,z'}(t)$ has continuous
sample paths.
\end{theorem}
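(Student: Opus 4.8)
The plan is to upgrade the c\`adl\`ag property furnished by \cite{EK2} to genuine path continuity by verifying a uniform no-jump condition on the generator, exploiting the fact that $A$ is a second-order differential operator in the moment coordinates with vanishing zeroth-order part. Recall from Corollary \ref{8.7} and formula \eqref{tag8.7} (equivalently \eqref{tag1.5}) that $A$ maps the core $\La^\circ=\R[q_1,q_2,\dots]$ into itself and that every summand of $A$ contains at least one derivative $\pd/\pd q_k$; in particular $A$ is a \emph{local} operator, i.e. $(AF)(\om_0)$ depends only on the $2$-jet of $F$ at $\om_0$. This locality is precisely what will rule out jumps.

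First I would record a key fourth-moment estimate. Fix $k\ge1$ and a point $\om_0\in\Om$, put $c=q_k(\om_0)$, and consider $g=(q_k-c)^4\in\La^\circ$. Since $q_k(\om_0)=c$, both $\pd g/\pd q_j=4(q_k-c)^3\de_{jk}$ and $\pd^2 g/\pd q_i\pd q_j=12(q_k-c)^2\de_{ik}\de_{jk}$ vanish at $\om_0$; as $A$ carries no zeroth-order term, every term of $Ag$ contains such a first or second derivative, so $g(\om_0)=0$ and $(Ag)(\om_0)=0$. Because $A\La^\circ\subset\La^\circ$, we have $g,Ag,A^2g\in\La^\circ\subset C(\Om)$, and Dynkin's formula together with $T(s)\psi=\psi+\int_0^sT(r)\bar A\psi\,dr$ applied to $\psi=Ag$ give
\[
E^{\om_0}\big[(q_k(\ome(t))-c)^4\big]=\int_0^t\big(T(s)Ag\big)(\om_0)\,ds
=\int_0^t\!\!\int_0^s\big(T(r)A^2g\big)(\om_0)\,dr\,ds\le\tfrac12\,t^2\,\Vert A^2g\Vert .
\]
As $\om_0$ ranges over the compact set $\Om$ the constant $c$ stays bounded and the coefficients of $g$ depend polynomially on $c$; hence $\Vert A^2g\Vert\le C_k$ uniformly in $\om_0$, yielding
\[
\sup_{\om_0\in\Om}E^{\om_0}\big[(q_k(\ome(t))-q_k(\om_0))^4\big]\le C_k\,t^2,\qquad k=1,2,\dots .
\]

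Next I would convert these per-coordinate bounds into a no-jump estimate for a metric on $\Om$. By Subsection \ref{9-3} the moment coordinates $q_1,q_2,\dots$ are continuous, separate points, and induce the topology of the compact space $\Om$; hence $\rho(\om,\om')=\sum_{k\ge1}2^{-k}\big(1\wedge|q_k(\om)-q_k(\om')|\big)$ metrizes $\Om$. Given $\epsi>0$ choose $K$ with $\sum_{k>K}2^{-k}<\epsi/2$; then $\rho(\ome(t),\om_0)>\epsi$ forces $|q_k(\ome(t))-q_k(\om_0)|>c_0$ for some $k\le K$ and a constant $c_0=c_0(\epsi,K)>0$, so by Markov's inequality and the fourth-moment bound
\[
\sup_{\om_0\in\Om}\Prob^{\om_0}\{\rho(\ome(t),\om_0)>\epsi\}\le\sum_{k\le K}c_0^{-4}C_k\,t^2=o(t),\qquad t\to0 .
\]
This is exactly the uniform no-jump condition which, for a c\`adl\`ag strong Markov (Feller) process, guarantees a.s. continuity of sample paths (see \cite{EK2}); combined with Theorem \ref{9.6} it proves the assertion.

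The routine points — that $\La^\circ$ is a core for $\bar A$ and the bookkeeping of the polynomial bounds — are straightforward, and the use of the fourth power (rather than the square) is what promotes the estimate from $O(t)$ to $o(t)$. The only place needing genuine care is the \textbf{uniformity in $\om_0$} of the $O(t^2)$ estimate, i.e. controlling $\Vert A^2g\Vert$ uniformly as the base point (hence the constant $c=q_k(\om_0)$) varies; this is where compactness of $\Om$ and the explicit polynomial-coefficient form \eqref{tag8.7} of $A$ are essential, and I expect it to be the main technical obstacle.
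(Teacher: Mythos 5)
Your proof is correct and follows essentially the same route as the paper's: the paper's own proof (by reference to \cite[Theorem 8.1]{BO8}) likewise consists in verifying the Dynkin--Kinney no-jump condition for the Feller process produced by Theorem \ref{9.6}, using generator-based estimates in the moment coordinates. Your implementation via the fourth powers $(q_k-c)^4$ and the iterated Dynkin formula is a clean realization of this, and since these test functions already lie in the polynomial core $\F=\La^\circ$ it even bypasses the one ingredient the paper imports from \cite{BO8} (Corollary 7.4 there, that smooth cylinder functions belong to the domain of $\bar A$).
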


\begin{proof} The argument is exactly the same as in the case $\tth=1$, see
\cite[Theorem 8.1]{BO8}. One shows that any smooth cylinder function in the
moment coordinates $q_1=p^\circ_2,q_2=p^\circ_3,\dots$ enters the domain of the
generator $\bar A$ (\cite[Corollary 7.4]{BO8}). Using this, one can verify the
Dynkin--Kinney condition.
\end{proof}

The next results are related to the boundary z-measure $M_{\tth,z,z'}$ defined
in Subsection \ref{9-4}. Below the angular brackets
$\langle\,\cdot\,,\,\cdot\,\rangle$ denote the pairing between functions and
measures. Consider the inner product in $C(\Om)$ determined by
\begin{equation}\label{tag9.5}
(f,g)=\langle fg, M_{\tth,z,z'}\rangle.
\end{equation}

\begin{lemma}\label{9.8}
{\rm(i)} If $f\in C(\Om)$, $f_n\in\Fun(\Y_n)$, and $f_n\to f$ in the sense of
Definition \ref{9.1}, then
$$
\langle f_n\,, M^{(n)}_{\tth,z,z'}\rangle \to \langle f\,,
M_{\tth,z,z'}\rangle.
$$

{\rm(ii)} If $f_n\to f$ and $g_n\to g$, where $f,g\in C(\Om)$ and
$f_n,g_n\in\Fun(\Y_n)$, then $f_ng_n\to fg$.
\end{lemma}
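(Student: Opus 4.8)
The plan is to reduce both claims to two ingredients that are already at hand: the weak convergence of the pushforward measures $\iota_{\tth,n}(M^{(n)}_{\tth,z,z'})\to M_{\tth,z,z'}$ furnished by Theorem \ref{9.4}, and two elementary properties of the projections $\pi_n\colon C(\Om)\to\Fun(\Y_n)$. The first property is that $\Vert\pi_n\Vert\le1$, noted in Subsection \ref{9-2}. The second is that each $\pi_n$ is an \emph{algebra homomorphism}: since $(\pi_n f)(\la)=f(\iota_{\tth,n}(\la))$ is simply pullback along the embedding $\iota_{\tth,n}$, one has $\pi_n(fg)=(\pi_n f)(\pi_n g)$. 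These two facts, rather than any new analysis, will carry the whole argument.

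For claim (i), I would first rewrite the pairing of $\pi_n f$ with the discrete measure as an integral against the pushforward,
$$
\langle\pi_n f, M^{(n)}_{\tth,z,z'}\rangle=\int_\Om f\,d\bigl(\iota_{\tth,n}(M^{(n)}_{\tth,z,z'})\bigr),
$$
and invoke Theorem \ref{9.4} together with the continuity of $f$ on the compact space $\Om$ to conclude that this expression tends to $\langle f, M_{\tth,z,z'}\rangle$. It then remains only to replace $\pi_n f$ by $f_n$. Since $M^{(n)}_{\tth,z,z'}$ is a probability measure on $\Y_n$,
$$
\bigl|\langle f_n, M^{(n)}_{\tth,z,z'}\rangle-\langle\pi_n f, M^{(n)}_{\tth,z,z'}\rangle\bigr|\le\Vert f_n-\pi_n f\Vert,
$$
and the right-hand side tends to $0$ by the hypothesis $f_n\to f$ (Definition \ref{9.1}). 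Combining the two limits gives the claim.

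For claim (ii), I would exploit the homomorphism property $\pi_n(fg)=(\pi_n f)(\pi_n g)$ and the telescoping identity
$$
f_ng_n-\pi_n(fg)=f_n(g_n-\pi_n g)+(f_n-\pi_n f)(\pi_n g).
$$
Taking supremum norms yields
$$
\Vert f_ng_n-\pi_n(fg)\Vert\le\Vert f_n\Vert\,\Vert g_n-\pi_n g\Vert+\Vert f_n-\pi_n f\Vert\,\Vert\pi_n g\Vert.
$$
Here $\Vert\pi_n g\Vert\le\Vert g\Vert$ by $\Vert\pi_n\Vert\le1$, while $\Vert f_n\Vert\le\Vert f_n-\pi_n f\Vert+\Vert\pi_n f\Vert\le\Vert f_n-\pi_n f\Vert+\Vert f\Vert$ is bounded because $f_n\to f$; and both $\Vert g_n-\pi_n g\Vert$ and $\Vert f_n-\pi_n f\Vert$ tend to $0$. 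Hence the right-hand side tends to $0$, which is precisely $f_ng_n\to fg$.

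Neither part poses a serious obstacle, since the genuine analytic input — the weak convergence of the approximating measures — is already supplied by Theorem \ref{9.4}. The one point that deserves a little care is the uniform boundedness of $\Vert f_n\Vert$ used in claim (ii): it is not part of the hypothesis but is forced by the convergence $f_n\to f$ together with $\Vert\pi_n\Vert\le1$, and it is exactly what allows the product of the two null sequences of discrepancies to be controlled.
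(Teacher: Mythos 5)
Your proof is correct and follows essentially the same route as the paper's: part (i) uses the weak convergence of the pushforward measures from Theorem \ref{9.4} plus the identity $\langle \pi_n f, M^{(n)}_{\tth,z,z'}\rangle=\langle f,\iota_{\tth,n}(M^{(n)}_{\tth,z,z'})\rangle$ and the probability-measure bound, while part (ii) rests on the homomorphism property $\pi_n(fg)=(\pi_n f)(\pi_n g)$ together with uniform boundedness. The only difference is that you spell out the telescoping estimate that the paper dismisses as ``obvious,'' which is a harmless (indeed welcome) elaboration.
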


\begin{proof} {\rm(i)} Set $\wt M^{(n)}_{\tth,z,z'}
=\iota_{\tth,n}(M^{(n)}_{\tth,z,z'})$; this is a probability measure on $\Om$.
By Theorem \ref{9.4}, the measures $\wt M^{(n)}_{\tth,z,z'}$ weakly converge to
the measure $M_{\tth,z,z'}$ as $n\to 0$. Therefore,
$$
\langle f\,, \wt M^{(n)}_{\tth,z,z'}\rangle \to \langle f\,,
M_{\tth,z,z'}\rangle.
$$

On the other hand,
$$
\langle f\,, \wt M^{(n)}_{\tth,z,z'}\rangle=\langle \pi_n f\,,
M^{(n)}_{\tth,z,z'}\rangle.
$$
Further, the assumption $f_n\to f$ just means $\Vert f_n-\pi_n f\Vert\to 0$, so
that
$$
\langle f_n\,, M^{(n)}_{\tth,z,z'}\rangle\,-\,\langle \pi_n f\,,
M^{(n)}_{\tth,z,z'}\rangle \to0.
$$
This proves the claim.

{\rm(ii)} We know that $\Vert f_n-\pi_n f\Vert\to0$ and $\Vert g_n-\pi_n
g\Vert\to0$, and we  have to check  that
$$
\Vert f_ng_n-\pi_n (fg)\Vert \to0.
$$
Since $\pi_n(fg)=(\pi_n f)(\pi_n g)$ and all the functions under consideration
are uniformly bounded, this is obvious.
\end{proof}

\begin{theorem}\label{9.9} {\rm(i)} The pre-generator $A:\La^\circ\to\La^\circ$ is
symmetric with respect to the inner product\/ \eqref{tag9.5} in the space
$\La^\circ$ and can be diagonalized in an appropriate orthogonal basis.

{\rm(ii)} The spectrum of $A$ is $\{0\}\cup\{-\si_m: m=2,3,\dots\}$ where
$$
\si_m=m(m-1+\tth^{-1}zz'), \qquad m=2,3,\dots,
$$
the eigenvalue\/ $0$ is simple, and the multiplicity of $-\si_m$
equals the number of partitions of $m$ without parts equal to $1$.
\footnote{Denoting by $\frak p(m)$ the number of {\it all\/}
partitions of $m$, the multiplicity in question can be written as
$\frak p(m)-\frak p(m-1)$.}
\end{theorem}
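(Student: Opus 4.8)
The plan is to derive both claims from the finite-level reversibility of the up-down chains (Proposition \ref{2.5}) together with the explicit formula \eqref{tag8.7} for $A$, passing to the limit with the approximation machinery of Section \ref{9}.

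\emph{Symmetry (claim (i)).} By Proposition \ref{2.5} each $T_n$ is self-adjoint in $L^2(\Y_n,M^{(n)}_{\tth,z,z'})$, hence so is $A_n=\epsi_n^{-1}(T_n-1)$; that is, $\langle (A_nu)v,M^{(n)}_{\tth,z,z'}\rangle=\langle u(A_nv),M^{(n)}_{\tth,z,z'}\rangle$ for all $u,v\in\Fun(\Y_n)$. Given $f,g\in\La^\circ$, I would take the approximating sequences $f_n,g_n\in\Fun(\Y_n)$ built in Step 1 of the proof of Theorem \ref{9.6}, for which $f_n\to f$, $g_n\to g$, $A_nf_n\to Af$ and $A_ng_n\to Ag$ in the sense of Definition \ref{9.1}. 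Applying Lemma \ref{9.8}(ii) to the pairs $(A_nf_n,g_n)$ and $(f_n,A_ng_n)$, and then Lemma \ref{9.8}(i), both sides of the reversibility identity converge and yield $\langle (Af)g,M_{\tth,z,z'}\rangle=\langle f(Ag),M_{\tth,z,z'}\rangle$, i.e. $(Af,g)=(f,Ag)$ for the inner product \eqref{tag9.5}.

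\emph{The scalar computation on graded pieces.} For the spectrum I would use that $A$ does not raise the filtration degree of $\La^\circ=\R[p^\circ_2,p^\circ_3,\dots]$ and split $A=A_0+A'$, where $A_0$ collects the filtration-degree-preserving monomials of \eqref{tag8.7} and $A'$ strictly lowers degree. Since $p^\circ_1=1$, any term carrying a factor $p^\circ_1$ loses a unit of degree; inspecting \eqref{tag8.7} term by term, the surviving degree-preserving part is
\begin{equation*}
A_0=-\sum_{k,l\ge2}kl\,p^\circ_kp^\circ_l\frac{\pd^2}{\pd p^\circ_k\pd p^\circ_l}-\sum_{k\ge2}k(k-1+\tth^{-1}zz')\,p^\circ_k\frac{\pd}{\pd p^\circ_k},
\end{equation*}
while the terms containing $p^\circ_{k+l-1}$, $p^\circ_{k-1}$, or a factor $p^\circ_1$ all lower the degree by at least $1$. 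A direct computation on a monomial $p^\circ_\mu$ with $|\mu|=m$ and no part equal to $1$, using that $p^\circ_k\,\pd/\pd p^\circ_k$ reads off the multiplicity of the part $k$ and that the weighted sum of these multiplicities is $m$, collapses to $A_0p^\circ_\mu=-m(m-1+\tth^{-1}zz')\,p^\circ_\mu=-\si_m p^\circ_\mu$. Thus $A_0$ acts as the scalar $-\si_m$ on the whole degree-$m$ homogeneous component, so the operator induced by $A$ on $\La^{\circ(m)}/\La^{\circ(m-1)}$ is $-\si_m$ times the identity.

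\emph{Diagonalization and multiplicities (claims (i), (ii)).} Each $\La^{\circ(m)}$ is finite-dimensional and $A$-invariant, and in a filtration-adapted basis $A$ is block upper-triangular with scalar diagonal blocks $-\si_j\,\mathrm{Id}$ for $j=0,2,3,\dots,m$, the $j$-th block having size $\dim(\La^{\circ(j)}/\La^{\circ(j-1)})$, which equals the number of partitions of $j$ without parts equal to $1$, namely $\frak p(j)-\frak p(j-1)$. The inner product \eqref{tag9.5} is positive definite on $\La^\circ$ (the nondegenerate z-measure $M_{\tth,z,z'}$ has full support on $\Om$, so no nonzero polynomial in the moment coordinates can vanish in $L^2$), so $A$ restricted to $\La^{\circ(m)}$ is a symmetric operator on a finite-dimensional Euclidean space and is orthogonally diagonalizable; exhausting $\La^\circ$ by the $\La^{\circ(m)}$ gives an orthogonal eigenbasis, which is (i). Reading the eigenvalues off the diagonal blocks gives $0$ (from $m=0$, simple) and $-\si_m$ for $m\ge2$; since $\tth^{-1}zz'>0$ in both the principal and complementary series the numbers $\si_m=m(m-1+\tth^{-1}zz')$ are strictly increasing, hence pairwise distinct, so the multiplicity of $-\si_m$ is exactly the dimension of the $m$-th graded piece. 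This is (ii).

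\emph{Main obstacle.} The load-bearing steps are the bookkeeping that isolates $A_0$ (the demotion of $p^\circ_1$-carrying terms to strictly lower filtration degree is easy to mishandle) together with the scalar identity $A_0p^\circ_\mu=-\si_m p^\circ_\mu$; and, for the multiplicity count, the positive-definiteness of \eqref{tag9.5} and the distinctness $\si_m\neq\si_{m'}$, which is exactly where the restriction of $(z,z')$ to the principal or complementary series (guaranteeing $\tth^{-1}zz'>0$) is used.
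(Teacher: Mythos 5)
Your proposal is correct and follows essentially the same route as the paper: the symmetry argument (Proposition \ref{2.5}, the approximating sequences from Step 1 of the proof of Theorem \ref{9.6}, and Lemma \ref{9.8}) is identical, and your degree-preserving part $A_0$ is exactly the paper's operator $-E(E-1+\tth^{-1}zz')$ with $E=\sum_{k\ge2}kp^\circ_k\,\pd/\pd p^\circ_k$ written out in expanded form, with the lower-degree remainder dismissed in the same way as not affecting the spectrum. Your extra remarks --- the block-triangular bookkeeping, the distinctness of the $\si_m$ via $\tth^{-1}zz'>0$, and the positive-definiteness of the form \eqref{tag9.5} needed for orthogonal diagonalization --- only make explicit points the paper leaves implicit, so they are refinements rather than a different method.
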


Note that this result also describes the spectrum of the closure of $A$ in the
Hilbert space $L^2(\Om, M_{\tth,z,z'})$.

\begin{proof}
{\rm(i)} We have to prove that for any $f,g\in\La^\circ$
\begin{equation}\label{tag9.6}
\langle (Af)g-f(Ag), \,M_{\tth,z,z'}\rangle=0.
\end{equation}
As shown on step 1 of the proof of Theorem \ref{9.6}, there exist sequences
$\{f_n\in\Fun(\Y_n)\}$ and $\{g_n\in\Fun(\Y_n)\}$ such that
\begin{equation}\label{tag9.7}
f_n\to f, \quad A_nf_n\to Af, \quad g_n\to g, \quad A_ng_n\to Ag
\end{equation}
in the sense of Definition \ref{9.1}.

On the other hand, observe that the transition operator
$T_n:\Fun(\Y_n)\to\Fun(\Y_n)$ is symmetric with respect to the inner product in
$\Fun(\Y_n)$ given by the formula similar to \eqref{tag9.5} but with
$M^{(n)}_{\tth,z,z'}$ instead of $M_{\tth,z,z'}$. Indeed, this follows from the
fact that $M^{(n)}_{\tth,z,z'}$ is the symmetrizing measure (Proposition
\ref{2.5}). Therefore, $A_n$ is also symmetric and hence
\begin{equation}\label{tag9.8}
\langle (A_nf_n)g_n-f_n(A_ng_n), \,M^{(n)}_{\tth,z,z'}\rangle=0.
\end{equation}

Now we apply Lemma \ref{9.8}. By virtue of its claim {\rm(ii)}, \eqref{tag9.7}
implies
$$
(A_nf_n)g_n-f_n(A_ng_n) \,\to\,(Af)g-f(Ag),
$$
and then claim {\rm(i)} makes it possible to pass to the limit in
\eqref{tag9.8}, which gives \eqref{tag9.6}.

The existence of an orthogonal eigenbasis for $A$ follows from the
fact that $A$ preserves each of the finite-dimensional subspaces
$\La^{\circ(m)}$ forming the filtration of $\La^\circ$.

{\rm(ii)} Set
$$
E=\sum_{k\ge2}kp^\circ_k\frac{\pd}{\pd p^\circ_k}\,.
$$
As seen from \eqref{tag8.7}, $A$ can be represented as the sum of the operator
\begin{equation}\label{tag9.9}
-E(E-1+\tth^{-1}zz')
\end{equation}
and a rest term which has degree $\le-1$. The operator \eqref{tag9.9} is
diagonalized in the (non-orthogonal) basis formed by $1$ and the monomials in
the generators $p^\circ_2,p^\circ_3,\dots$, and has precisely the spectrum
indicated in the statement of the proposition. The rest term, obviously, does
not affect the spectrum.
\end{proof}

\begin{theorem}\label{9.10}
{\rm(i)} The Markov process $\ome_{\tth, z,z'}(t)$  has the boundary measure
$M_{\tth, z,z'}$ as a unique stationary distribution.

{\rm(ii)} It is also a symmetrizing measure.

{\rm(iii)} The process is ergodic in the sense that for any $f\in C(\Om)$,
$$
\lim_{t\to+\infty}\Vert T(t)f-\langle f, M_{\tth,z,z'}\rangle1\Vert=0,
$$
where  $1$ is the constant function equal to one.
\end{theorem}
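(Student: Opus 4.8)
The plan is to derive everything from the spectral information already assembled in Theorem~\ref{9.9}. Assertions (i) (existence of a stationary measure) and (ii) follow quickly from the symmetry of $A$, the ergodicity statement (iii) is the one requiring real work, and the uniqueness claim in (i) will then be read off from (iii).

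First I would dispose of (ii) and the existence part of (i). By Theorem~\ref{9.9}(i) the pre-generator $A$ is symmetric with respect to the inner product $(\,\cdot\,,\,\cdot\,)$ of \eqref{tag9.5}, and it admits a complete orthogonal system of eigenvectors lying in the dense core $\La^\circ$; hence $A$ is essentially self-adjoint and its closure $\bar A$ in $L^2(\Om,M_{\tth,z,z'})$ is self-adjoint, so the semigroup $T(t)=e^{t\bar A}$ consists of self-adjoint operators. This is exactly the statement that $M_{\tth,z,z'}$ is a symmetrizing (reversible) measure, giving (ii). For stationarity I would put $g=1$ in the symmetry identity of Theorem~\ref{9.9}(i) (equation \eqref{tag9.6}). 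Since the differential operator \eqref{tag8.7} has vanishing constant term, $A1=0$, and therefore $\langle Af,M_{\tth,z,z'}\rangle=0$ for every $f\in\La^\circ$. Consequently $\frac{d}{dt}\langle T(t)f,M_{\tth,z,z'}\rangle=\langle \bar A\,T(t)f,M_{\tth,z,z'}\rangle=0$, so $\langle T(t)f,M_{\tth,z,z'}\rangle=\langle f,M_{\tth,z,z'}\rangle$ for all $t$ and all $f$ in $\La^\circ$, and by continuity for all $f\in C(\Om)$. Thus $M_{\tth,z,z'}$ is invariant.

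For (iii) I would exploit the fact that $A$ preserves each finite-dimensional subspace $\La^{\circ(m)}$ of the filtration of $\La^\circ$ (seen from \eqref{tag8.7}, as in step~3 of the proof of Theorem~\ref{9.6}). Since $A$ maps $\La^{\circ(m)}$ into itself and this subspace lies in the core, uniqueness of solutions of the abstract Cauchy problem shows that $T(t)$ leaves $\La^{\circ(m)}$ invariant and restricts there to the matrix exponential $e^{tA}$. On $\La^{\circ(m)}$ the operator $A$ is diagonalizable (Theorem~\ref{9.9}) with spectrum in $\{0\}\cup\{-\si_k:2\le k\le m\}$, the zero eigenspace being spanned by the constants; moreover the $L^2(M_{\tth,z,z'})$-orthogonal projection onto that eigenspace is precisely $f\mapsto\langle f,M_{\tth,z,z'}\rangle1$, since $\|1\|=1$ and eigenspaces for distinct eigenvalues of a self-adjoint operator are orthogonal. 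Writing the spectral decomposition of $e^{tA}$ on $\La^{\circ(m)}$, all contributions other than the constant one decay like $e^{-\si_k t}$ with $\si_k\ge\si_2>0$; hence, as all norms on a finite-dimensional space are equivalent,
\[
\bigl\|T(t)f-\langle f,M_{\tth,z,z'}\rangle1\bigr\|\le C_f\,e^{-\si_2 t}\longrightarrow0,\qquad f\in\La^\circ .
\]
To pass to arbitrary $f\in C(\Om)$ I would use density of $\La^\circ$ in $C(\Om)$ together with $\|T(t)\|\le1$ and $|\langle h,M_{\tth,z,z'}\rangle|\le\|h\|$: approximating $f$ by $g\in\La^\circ$ within $\epsi$ and using the triangle inequality bounds $\limsup_{t\to\infty}\|T(t)f-\langle f,M_{\tth,z,z'}\rangle1\|$ by $2\epsi$, whence (iii). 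Uniqueness in (i) is then immediate: if $M'$ is any invariant probability measure, then $\langle f,M'\rangle=\langle T(t)f,M'\rangle\to\langle f,M_{\tth,z,z'}\rangle$ as $t\to\infty$, forcing $M'=M_{\tth,z,z'}$.

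The main obstacle is the passage from the Hilbert-space framework of Theorem~\ref{9.9}, where the spectral decomposition is natural, to the uniform ($C(\Om)$) convergence demanded in (iii). The resolving device is that the polynomial core splits into the $A$-invariant finite-dimensional pieces $\La^{\circ(m)}$, on which the $L^2$ eigenbasis consists of genuine continuous functions and all norms are comparable, so the exponential decay of the nonconstant spectral components also holds in the supremum norm; the extension to all continuous $f$ is then a routine $\epsi$-argument using contractivity. One should also record that the spectral gap is genuinely positive: for both the principal and the complementary series one has $zz'>0$, so $\si_m=m(m-1+\tth^{-1}zz')>0$ for every $m\ge2$.
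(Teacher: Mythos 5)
Your proposal is correct and takes essentially the same approach as the paper: the paper's own proof consists of the remark that the result "relies on Theorem \ref{9.9}" with the argument "exactly the same as in the proof of Theorem 8.3 from \cite{BO8}", and that argument is precisely what you spell out — stationarity and reversibility from the symmetry of $A$ together with $A1=0$, and ergodicity (hence uniqueness) from the spectral gap, exploited via the $A$-invariant finite-dimensional subspaces $\La^{\circ(m)}$ on which $T(t)$ acts as a matrix exponential, followed by a density/contractivity argument in $C(\Om)$. Your closing observation that $zz'>0$ in both the principal and complementary series, so that $\si_2>0$ and the gap is genuine, is a worthwhile detail to make explicit.
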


\begin{proof} This result relies on Theorem \ref{9.9}: the argument is exactly the
same as in the proof of Theorem \ref{8.3} from \cite{BO8}.
\end{proof}

\begin{remark}\label{9.11} Recall the notation $\F=\La^\circ$ for the domain of the
pre-generator $A$ of the Markov process $\ome_{\tth, z,z'}(t)$. The
pre-Dirichlet form on $\F\times\F$ corresponding to the pre-generator can be
written in the following way
$$
-\int_\Om (AF)(\al;\be)\,G(\al;\be)M_{\tth,z,z'}(d\al
d\be)=\int_{\Om}\Ga(F,G)(\al;\be)M_{\tth,z,z'}(d\al d\be),
$$
where the $F,G\in\F$ and $\Ga(\,\cdot\,,\,\cdot\,)$ (the ``square field
operator'') is a symmetric bilinear map $\F\times\F\to\F$ which {\it does not
depend\/} on the parameters $\tth,z,z'$:
$$
\Ga(F,G)=\sum_{i,j=1}^\infty (i+1)(j+1)(q_{i+j}-q_iq_j)\frac{\pd F}{\pd
q_i}\frac{\pd G}{\pd q_j} =\sum_{k,l=2}^\infty kl(p^\circ_{k+l-1}-p^\circ_k
p^\circ_l)\frac{\pd F}{\pd p^\circ_k}\frac{\pd G}{\pd p^\circ_l}\,.
$$
The proof is exactly the same as in \cite[Theorem 8.4]{BO8}; it relies on the
fact that the coefficients of the second derivatives in \eqref{tag1.5} or
\eqref{tag8.7} do not depend on the parameters.
\end{remark}

\begin{remark}\label{9.12} Here is a complement to the remark made in Subsection
\ref{1-2} about the possibility to degenerate the pre-generator $A$ given by
formula \eqref{tag1.5} (or, equivalently, by \eqref{tag8.7}) to the
Ethier--Kurtz--Schmuland operator \eqref{tag1.7} in the limit regime
\eqref{tag1.6}. Recently Petrov \cite{Pe1} found a two-parameter generalization
of the Ethier--Kurtz diffusion associated to Pitman's two-parameter
generalization $P(\al,\tau)$ of the Poisson--Dirichlet distribution (here
$\al\in[0,1)$ is the additional parameter and $\tau$ should be strictly greater
than $-\al$). \footnote{I recall that I am writing $PD(\al,\tau)$ instead of
the conventional $PD(\al,\tth)$ to avoid a conflict of notation.} The
corresponding two-parameter extension of the operator \eqref{tag1.7} has the
form
\begin{equation}\label{tag9.10}
\sum_{i,j\ge1}(i+1)(j+1) (q_{i+j}-q_i q_j)\frac{\pd^2}{\pd q_i \pd q_j}\;+\;
\sum_{i\ge1}(i+1)\big[(i-\al) q_{i-1}-(i+\tau) q_i\big]\frac{\pd}{\pd q_i}\,,
\end{equation}
see \cite[formula (16)]{Pe1}.  Now, observe that this more general operator can
also be obtained by degeneration from \eqref{tag1.5}: to achieve this we have
to impose the following conditions on the asymptotics of our triple
$(\tth,z,z')$:
\begin{equation}\label{tag9.11}
\tth\to0, \quad zz'\to 0, \quad \tth^{-1}zz'\to\tau, \quad z+z'\to-\al.
\end{equation}
Moreover, one can show that in this limit regime, the down transition
probabilities $p^\down_\tth(\la,\mu)$, the up transition probabilities
$p^\up_{\tth,z,z'}(\la,\nu)$, and the weights $M^{(n)}_{\tth,z,z'}(\la)$
converge to the respective quantities considered in \cite{Pe1}. However, for
$\al\ne0$, the limit regime \eqref{tag9.11} is {\it incompatible\/} with the
restrictions on $(z,z')$ that ensure positivity of the up transition
probabilities and the z-measures (see \cite[Proposition 2.3]{BO5}). That is, if
we wish to perform the limit \eqref{tag9.11} with a nonzero $\al$, then we
inevitably have to admit those $(z,z')$'s for which the pre-limit quantities
$p^\up_{\tth,z,z'}(\la,\nu)$ and $M^{(n)}_{\tth,z,z'}(\la)$ can take negative
or even complex values, which makes the limit transition purely formal.
\end{remark}

\end{document}